\newtheorem{dfn}{Definition}[section]
\newtheorem{thm}[dfn]{Theorem}
\newtheorem{prop}[dfn]{Proposition}
\newtheorem{lem}[dfn]{Lemma}
\newtheorem{cor}[dfn]{Corollary}
\newtheorem{rem}[dfn]{Remark}
\newtheorem{ass}[dfn]{Assumption}
\numberwithin{equation}{section}
\newcommand{\KMa}[1]{{\color{black} #1}}
\newcommand{\KMb}[1]{{\color{black} #1}}
\newcommand{\KMc}[1]{{\color{black} #1}}
\newcommand{\KMd}[1]{{\color{black} #1}}
\newcommand{\KMe}[1]{{\color{black} #1}}
\newcommand{\KMf}[1]{{\color{black} #1}}
\newcommand{\KMg}[1]{{\color{black} #1}}
\newcommand{\KMh}[1]{{\color{black} #1}}
\newcommand{\KMi}[1]{{\color{black} #1}}
\newcommand{\KMj}[1]{{\color{black} #1}}
\newcommand{\KMk}[1]{{\color{black} #1}}
\newcommand{\KMl}[1]{{\color{black} #1}}
\begin{document}

\title{Multi-order asymptotic expansion of blow-up solutions for autonomous ODEs. II - 
Dynamical Correspondence}

\author[1]{Hisatoshi Kodani}
\author[1,2]{Kaname Matsue\thanks{(Corresponding author, {\tt kmatsue@imi.kyushu-u.ac.jp})}}
\author[1]{\\ Hiroyuki Ochiai}
\author[3]{Akitoshi Takayasu}

\affil[1]{
\normalsize{
Institute of Mathematics for Industry, Kyushu University, Fukuoka 819-0395, Japan
}
}
\affil[2]{International Institute for Carbon-Neutral Energy Research (WPI-I$^2$CNER), Kyushu University, Fukuoka 819-0395, Japan}
\affil[3]{Faculty of Engineering, Information and Systems, University of Tsukuba, 1-1-1 Tennodai, Tsukuba, Ibaraki 305-8573, Japan}

\maketitle

\begin{abstract}
In this paper, we provide a natural correspondence of eigenstructures of \KMb{Jacobian} matrices associated with equilibria for appropriately transformed two systems describing finite-time blow-ups for ODEs with quasi-homogeneity in an asymptotic sense.
As a corollary, we see that asymptotic expansions of blow-ups proposed in Part I \cite{asym1} themselves provide a criterion of the existence of blow-ups with an intrinsic gap structure of stability information among two systems.
Examples provided in Part I \cite{asym1} are revisited to show the above correspondence.
\end{abstract}

{\bf Keywords:} blow-up solutions, asymptotic expansion, dynamics at infinity
\par
\bigskip
{\bf AMS subject classifications : } 34A26, 34C08, 34D05, 34E10, 34C41, 37C25, 58K55

\tableofcontents

\section{Introduction}
\label{section-intro}
As the sequel to Part I \cite{asym1}, this paper aims at describing an intrinsic nature of blow-up solutions of the Cauchy problem of an autonomous system of ODEs
\begin{equation}
\label{ODE-original}
{\bf y}' = \frac{d{\bf y}(t)}{dt}=f ({\bf y}(t)),\quad {\bf y}(t_0) = {\bf y}_0,
\end{equation}
where $t\in[t_0,t_{\max})$ with $t_0<t_{\max} \leq \infty$, $f:\mathbb{R}^n\to\mathbb{R}^n$ is a $C^r$ function with $r\geq 2$ and ${\bf y}_0\in\mathbb{R}^n$.
A solution ${\bf y}(t)$ is said to \KMi{\em blow up} at $t = t_{\max} < \infty$ if \KMh{its modulus} diverges as $t\to t_{\max}-0$.
The value $t_{\max}$, the maximal existence time, is then referred to as the {\em blow-up time} of a blow-up solution.
Throughout the rest of this paper, let $\theta(t)$ be given by
\begin{equation*}
\theta(t) = t_{\max}-t,
\end{equation*}
where $t_{\max}$ is assumed to \KMg{be a finite value} and to be known a priori.
\par
In Part I \cite{asym1}, a systematic methodology for calculating multi-order asymptotic expansions of blow-up solutions is provided for (\ref{ODE-original}) with an asymptotic property of $f$.
We have observed there that, assuming the existence of blow-up solutions with uniquely determined leading asymptotic behavior (referred to as {\em type-I} blow-up in the present paper), roots of a nonlinear system of (algebraic) equations called {\em the balance law} and the associated eigenvalues of Jacobian matrices, called {\em the blow-up power eigenvalues} of {\em the blow-up power-determining matrices}, essentially determine all possible terms appeared in asymptotic expansions of blow-up solutions.
In particular, in the case of type-I blow-ups, these algebraic objects can determine all the essential aspects of type-I blow-ups.
A quick review of this methodology is shown in Section \ref{section-review-asym}.
\par
On the other hand, the second, the fourth authors and \KMb{their} collaborators have recently developed a framework to characterize blow-up solutions from the viewpoint of dynamics at infinity (e.g. \cite{Mat2018, Mat2019}), and have derived machineries of computer-assisted proofs for the existence of blow-up solutions as well as their qualitative and quantitative features (e.g. \cite{LMT2021, MT2020_1, MT2020_2, TMSTMO2017}).
\KMh{As in} the present paper, \KMh{finite-dimensional} vector fields with scale invariance in an asymptotic sense, {\em asymptotic quasi-homogeneity} defined precisely in Definition \ref{dfn-AQH}, are mainly concerned.
The main idea is to apply {\em compactifications} of phase spaces associated with asymptotic quasi-homogeneity of vector fields and {\em time-scale desingularizations} at infinity to obtaining {\em desingularized vector fields} \KMa{so that {\em dynamics at infinity} makes sense.}
In this case, \KMa{divergent} solutions of the original ODE (\ref{ODE-original}) correspond to trajectories on (local) stable manifolds of invariant sets on the geometric object expressing the infinity in the compactified phase space, which is referred to as the {\em horizon}.
It is shown in preceding works that a generic dynamical property of invariant sets such as equilibria and  periodic orbits on the horizon, {\em hyperbolicity}, yields blow-up solutions whose leading asymptotic behavior, {\em the blow-up rate}, is uniquely determined by the quasi-homogeneous component of $f$, namely type-I, and these invariant sets.
The precise statements are reviewed in Section \ref{section-preliminary} in the case of blow-ups characterized by equilibria on the horizon.
This approach reduces the problem involving divergent solutions, including blow-up solutions, to the standard theory of dynamical systems, and several successful examples are shown in preceding works \cite{LMT2021, Mat2018, MT2020_1, MT2020_2, TMSTMO2017}.
In particular, this approach provides \KMb{a criterion for} the (non-)existence of blow-up solutions of systems we are interested in and their characterizations by means of the standard theory of dynamical systems, without any a priori knowledge of the blow-up structure in the original system (\ref{ODE-original}).
%
In successive studies \cite{Mat2019}, it is demonstrated that blow-up behavior other than type-I can be characterized by {\em nonhyperbolic} invariant sets on the horizon for associated desingularized vector fields.
\par
\bigskip
Now we have two characterizations of blow-up solutions: (i). {\em multi-order asymptotic expansions} and (ii). {\em trajectories on local stable manifolds of invariant sets on the horizon for desingularized vector fields}, where the the asymptotic behavior of leading terms is assumed to be identical\footnote{
Indeed, assumptions for characterizing multi-order asymptotic expansions discussed in Part I \cite{asym1} are based on characterizations of blow-ups by means of dynamics at infinity.
}. 
It is then natural to ask {\em whether there is a correspondence between \KMb{these} two characterizations of blow-up solutions}. 
The main aim of the present paper is to answer this question.
More precisely, we provide the following one-to-one correspondences.
\begin{itemize}
\item Roots of the balance law (in asymptotic expansions) and equilibria on the horizon (for desingularized vector fields) describing blow-ups in forward time direction \KMc{(Theorem \ref{thm-balance-1to1})}.
\item Eigenstructure between the associated blow-up power-determining matrices (in asymptotic expansions) and the Jacobian matrices at the above equilibria on the horizon (for desingularized vector fields) \KMc{(Theorem \ref{thm-blow-up-estr})}.
\end{itemize}
These correspondences provide us with significant benefits about blow-up characterizations.
First, {\em asymptotic expansions of blow-up solutions themselves provide a criterion of their existence} \KMc{(Theorem \ref{thm-existence-blow-up})}.
In general, asymptotic expansions are considered {\em assuming the existence of the corresponding blow-up solutions in other arguments}.
On the other hand, the above correspondences imply that roots of the balance law and blow-up power eigenvalues, which essentially characterize asymptotic expansions of blow-ups, can provide the existence of blow-ups.
More precisely, these algebraic objects provide linear information of dynamics around equilibria on the horizon for desingularized vector fields, which is sufficient to verify the existence of blow-ups, as provided in preceding works.
Second, the correspondence of eigenstructure provides the gap of stability information between two systems of our interests \KMc{(Theorem \ref{thm-stability})}.
In particular, {\em stabilities of the corresponding equilibria for two systems describing an identical blow-up solution are always different}.
This gap warns us to take care of the dynamical interpretation of blow-up solutions and their perturbations, depending on the choice of systems we consider.

\par
\bigskip

The rest of this paper is organized as follows.
In Section \ref{section-preliminary}, a methodology for characterizing blow-up solutions from the viewpoint of dynamical systems based on preceding works (e.g. \cite{Mat2018, MT2020_1}) is quickly reviewed.
The precise definition of the class of vector fields we mainly treat is presented there.
The methodology successfully extracts blow-up solutions without those knowledge in advance, as already reported in preceding works (e.g. \cite{Mat2018, Mat2019, MT2020_1}).
\par
In Section \ref{section-correspondence}, the correspondence of structures characterizing blow-ups is discussed.
First, all notions necessary to characterize multi-order asymptotic expansions of type-I blow-up solutions proposed in Part I \cite{asym1} are reviewed.
Second, we extract one-to-one correspondence between roots of the balance law in asymptotic expansions and equilibria on the horizon for desingularized vector fields.
Note that equilibria on the horizon for desingularized vector fields can \KMb{also} characterize blow-ups in {\em backward} time direction, but the present correspondence excludes such equilibria due to the form of the system for deriving asymptotic expansions.
Third, we prove the existence of a common eigenstructure which {\em all} roots of the balance law, in particular all blow-up solutions of our interests, must possess.
As a consequence of the correspondence of eigenstructure, we also prove the existence of the corresponding eigenstructure in the desingularized vector fields such that all solutions asymptotic to equilibria on the horizon for desingularized vector fields in forward time direction must possess.
The common structures and their correspondence provide the gap of stability information for blow-up solutions between two systems we mentioned.
Finally, we provide the full correspondence of eigenstructures with possible multiplicity of eigenvalues.
As a corollary, we obtain a new criterion of the existence of blow-up solutions by means of roots of the balance law and associated blow-up power eigenvalues, in particular asymptotic expansions of blow-up solutions, and the stability gap depending on the choice of systems.
\par
In Section \ref{section-examples}, we revisit examples shown in Part I \cite{asym1} and confirm that our results indeed extract characteristic features of type-I blow-ups and the correspondence stated in main results.

\begin{rem}[\KMe{A correspondence to Painlev\'{e}-type analysis}]
Several results shown in Section \ref{section-correspondence} are closely related to Painlev\'{e}-type analysis for complex ODEs.
We briefly refer to several preceding works for accessibility.
In \cite{AM1989}, quasi-homogeneous (in the similar sense to Definition \ref{dfn-AQH}, referred to as {\em weight-homogeneity} in \cite{AM1989}) complex ODEs are concerned\KMi{,} and the {\em algebraic complete integrability} for complex Hamiltonian systems is considered. 
One of main results \KMe{there} is the characterization of {\em formal} Laurent solutions of quasi-homogeneous complex ODEs to be {\em convergent} by means of the {\em indicial locus} \KMi{$\mathscr{C}$,} and algebraic properties of the {\em Kovalevskaya matrix (Kowalewski matrix in \cite{AM1989}) $\mathscr{L}$} evaluated at points on $\mathscr{C}$.
Eigenstructures of $\mathscr{L}$ is also related to invariants and geometric objects by means of invariant manifolds for the flow and divisors in \KMe{Abelian varieties}.
Furthermore, the family of convergent Laurent solutions leads to affine varieties of parameters called \KMe{{\em Painlev\'{e} varieties}.}
%
The similar characterization of integrability appears in \cite{C2015}, where asymptotically quasi-homogeneous polynomial vector fields are treated.
It is proved in \cite{C2015} that eigenvalues ${\rm Spec}(\mathscr{L})$ of the Kovalevskaya matrix $\mathscr{L}$, referred to as {\em Kovalevskaya exponents}, are invariants under locally analytic transformations around points on $\mathscr{C}$, and formal Laurent series as convergent solutions of polynomial vector fields on weighted projective spaces are characterized by the structure of ${\rm Spec}(\mathscr{L})$.
These results are applied to integrability of polynomial vector fields (the {\em extended Painlev\'{e} test} in \cite{C2015}) and the first Painlev\'{e} hierarchy, as well as classical Painlev\'{e} equations in the subsequent papers \cite{C2016_124, C2016_356}. 
\par
We remark that 
\KMi{the indicial locus $\mathscr{C}$ corresponds to a collection of roots of the balance law (Definition \ref{dfn-balance}), and that }
the matrix $\mathscr{L}$ is essentially the same as blow-up power-determining matrix, and ${\rm Spec}(\mathscr{L})$ corresponds to blow-up power eigenvalues.
It is therefore observed that there are several similarities of characterizations between Painlev\'{e}-type properties and blow-up behavior.
It should be noted here\KMe{, however, } that exponents being {\em integers with an identical sign} and {\em semi-simple} have played key roles in characterizing integrability in studies of the Painlev\'{e}-type properties, as stated in the above references.
\KMe{In contrast}, only the identical sign is essential to determine blow-up asymptotics\KMh{.}
We notice that essential ideas appeared in algebraic geometry and Painlev\'{e}-type analysis also make contributions to extract blow-up characteristics for ODEs in a general setting.
\end{rem}


\section{Preliminaries: blow-up description through dynamics at infinity}
\label{section-preliminary}

In this section, we briefly review a characterization of blow-up solutions for autonomous, finite-dimensional ODEs from the viewpoint of dynamical systems.
Details of the present methodology are already provided in \cite{Mat2018, MT2020_1}.
%
%
\subsection{Asymptotically quasi-homogeneous vector fields}
\label{sec:QH}

First of all, we review a class of vector fields in our present discussions.

\begin{dfn}[Asymptotically quasi-homogeneous vector fields, cf. \cite{D1993, Mat2018}]\rm
\label{dfn-AQH}
Let $f_0: \mathbb{R}^n \to \mathbb{R}$ be a function.
Let $\alpha_1,\ldots, \alpha_n$ \KMl{be nonnegative integers with $(\alpha_1,\ldots, \alpha_n) \not = (0,\ldots, 0)$ and $k > 0$.}
We say that $\KMf{f_0}$ is a {\em quasi-homogeneous function\footnote{
\KMl{In preceding studies, all $\alpha_i$'s and $k$ are typically assumed to be natural numbers.
In the present study, on the other hand, the above generalization is valid.
}
} of type $\KMh{\alpha = }(\alpha_1,\ldots, \alpha_n)$ and order $k$} if
\begin{equation*}
f_0( s^{\Lambda_\alpha}{\bf x} ) = s^k f_0( {\bf x} )\quad \text{ for all } {\bf x} = (x_1,\ldots, x_n)^T \in \mathbb{R}^n \text{ and } s>0,
\end{equation*}
where\KMb{\footnote{
Throughout the rest of this paper, the power of real positive numbers or functions to matrices is described in the similar manner.
}}
\begin{equation*}
\Lambda_\alpha =  {\rm diag}\left(\alpha_1,\ldots, \alpha_n\right),\quad s^{\Lambda_\alpha}{\bf x} = (s^{\alpha_1}x_1,\ldots, s^{\alpha_n}x_n)^T.
\end{equation*}
Next, let $X = \sum_{i=1}^n f_i({\bf x})\frac{\partial }{\partial x_i}$ be a continuous vector field on $\mathbb{R}^n$.
We say that $X$, or simply $f = (f_1,\ldots, f_n)^T$ is a {\em quasi-homogeneous vector field of type $\alpha = (\alpha_1,\ldots, \alpha_n)$ and order $k+1$} if each component $f_i$ is a quasi-homogeneous function of type $\alpha$ and order $k + \alpha_i$.
\par
Finally, we say that $X = \sum_{i=1}^n f_i({\bf x})\frac{\partial }{\partial x_i}$, or simply $f$ is an {\em asymptotically quasi-homogeneous vector field of type $\alpha = (\alpha_1,\ldots, \alpha_n)$ and order $k+1$ at infinity} if there is a quasi-homogeneous vector field  $f_{\alpha,k} = (f_{i; \alpha,k})_{i=1}^n$ of type $\alpha$ and order $k+1$ such that
\begin{equation*}
\KMk{f_i( s^{\Lambda_\alpha}{\bf x} ) - s^{k+\alpha_i} f_{i;\alpha,k}( {\bf x} ) = o(s^{k+\alpha_i})}
 \end{equation*}
\KMk{as $s\to +\infty$} uniformly for \KMf{${\bf x} = (x_1,\ldots, x_n)\in S^{n-1} \equiv \{{\bf x}\in \mathbb{R}^n \mid \sum_{i=1}^n x_i^2 = 1\}$}.
\end{dfn}

A fundamental property of quasi-homogeneous functions and vector fields is reviewed here.
\begin{lem}\label{temporary-label2}
A quasi-homogenous function $f_0$ of type $(\alpha_1,\ldots,\alpha_n)$ and order $k$ satisfies the following differential equation:
\begin{equation}\label{temporary-label1}
\sum_{l=1}^n \alpha_l y_l \frac{\partial f_0}{\partial y_l}({\bf y}) = k f_0({\bf y}).
\end{equation}
This equation is rephrased as
\begin{equation*}
(\nabla_{\bf y} f_0({\bf y}))^T \Lambda_{\alpha} {\bf y} = k f_0({\bf y}).
\end{equation*}
\end{lem}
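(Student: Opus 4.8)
The plan is to exploit the defining scaling identity of quasi-homogeneity directly, differentiating it with respect to the scaling parameter and then evaluating at the identity scaling $s=1$. Concretely, I would fix an arbitrary point ${\bf y}\in\mathbb{R}^n$ and introduce the single-variable auxiliary function $g(s) = f_0(s^{\Lambda_\alpha}{\bf y})$ for $s>0$. By the definition of a quasi-homogeneous function of type $\alpha$ and order $k$, this satisfies $g(s) = s^k f_0({\bf y})$ for every $s>0$, an identity in the single real variable $s$ with ${\bf y}$ held fixed.

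First I would differentiate both sides of $g(s)=s^k f_0({\bf y})$ with respect to $s$. The right-hand side gives $k s^{k-1} f_0({\bf y})$ at once. For the left-hand side I would apply the chain rule to the composition $s\mapsto (s^{\alpha_1}y_1,\ldots,s^{\alpha_n}y_n)\mapsto f_0(\cdot)$; since the $l$-th coordinate of $s^{\Lambda_\alpha}{\bf y}$ is $s^{\alpha_l}y_l$, whose $s$-derivative is $\alpha_l s^{\alpha_l-1}y_l$, this produces $\sum_{l=1}^n \frac{\partial f_0}{\partial y_l}(s^{\Lambda_\alpha}{\bf y})\,\alpha_l s^{\alpha_l-1}y_l$.

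Then I would set $s=1$. On the left, $s^{\Lambda_\alpha}{\bf y}$ collapses to ${\bf y}$ and each power $s^{\alpha_l-1}$ becomes $1$, leaving $\sum_{l=1}^n \alpha_l y_l \frac{\partial f_0}{\partial y_l}({\bf y})$; on the right one obtains $k f_0({\bf y})$. This is exactly the scalar identity (\ref{temporary-label1}). The matrix reformulation then follows immediately by recognizing the resulting sum as the Euclidean inner product of $\nabla_{\bf y} f_0({\bf y})$ with $\Lambda_\alpha{\bf y} = (\alpha_1 y_1,\ldots,\alpha_n y_n)^T$, which is precisely $(\nabla_{\bf y} f_0({\bf y}))^T \Lambda_\alpha {\bf y}$.

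The argument is a weighted version of the classical Euler identity for homogeneous functions, so I do not anticipate a genuine obstacle; the only point requiring care is the regularity of $f_0$. The chain rule step presupposes that $f_0$ is differentiable along each scaling ray, which is consistent with the statement itself, as it asserts an identity involving the first partial derivatives of $f_0$. I would therefore note at the outset that $f_0$ is tacitly assumed to be of class $C^1$ (as holds throughout the paper, where the vector field is $C^r$ with $r\geq 2$), so that $g$ is differentiable and the computation is legitimate. No uniformity or compactness arguments are needed here, since the identity is established pointwise in ${\bf y}$.
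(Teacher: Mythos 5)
Your proposal is correct and follows exactly the paper's own argument: differentiate the scaling identity $f_0(s^{\Lambda_\alpha}{\bf y}) = s^k f_0({\bf y})$ in $s$ via the chain rule and set $s=1$, then read off the matrix form as an inner product. The paper states this in one line; your version merely spells out the chain-rule computation and the tacit $C^1$ regularity, which is a reasonable elaboration rather than a different route.
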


\begin{proof}
Differentiating the identity
\[
f_0(s^{\Lambda_\alpha}{\bf y}) = s^k f_0({\bf y})
\]
in $s$ and put $s=1$, we obtain
the desired equation \eqref{temporary-label1}.
\end{proof}
The same argument yields that, for any quasi-homogenenous function $f_0$ of type $\alpha = (\alpha_1,\ldots, \alpha_n)$ and order $k$, 
\begin{equation*}
\sum_{\KMb{l}=1}^n \alpha_\KMb{l} s^{\alpha_\KMb{l}}y_\KMb{l} \frac{\partial f_0}{\partial y_l}(s^{\Lambda_\alpha}{\bf y}) = ks^k f_0({\bf y})
\end{equation*}
and 
\begin{equation*}
\sum_{\KMb{l}=1}^n \alpha_\KMb{l} (\alpha_\KMb{l} - 1) s^{\alpha_\KMb{l}} y_\KMb{l} \frac{\partial f_0}{\partial y_l}(s^{\Lambda_\alpha}{\bf y}) + \sum_{j,l = 1}^n \alpha_j \alpha_l (s^{\alpha_j}y_j) (s^{\alpha_l}y_l) \frac{\partial^2 f_0}{\partial y_j \partial y_l}(s^{\Lambda_\alpha}{\bf y})  = k(k-1)s^k f_0({\bf y})
\end{equation*}
for any ${\bf y}\in \mathbb{R}^n$.
In particular, each partial derivative satisfies
\begin{equation}
\label{order-QH-derivatives}
\frac{\partial f_0}{\partial y_l}(s^{\Lambda_\alpha}{\bf y}) = O\left(s^{k-\alpha_l} \right),\quad 
\frac{\partial^2 f_0}{\partial y_j \partial y_l}(s^{\Lambda_\alpha}{\bf y}) = O\left(s^{k-\alpha_j - \alpha_l} \right)
\end{equation}
as $s\to 0, \infty$ for any ${\bf y}\in \mathbb{R}^n$, as long as $f_0$ is $C^2$ in the latter case.
In particular, \KMl{for any fixed ${\bf y}\in \mathbb{R}^n$,} both derivatives are $O(1)$ as $s\to 1$.
\KMk{In other words, we have quasi-homogeneous relations for partial derivatives in the above sense.}

\begin{lem}
\label{lem-identity-QHvf}
A quasi-homogeneous vector field $f=(f_1,\ldots,f_n)$ of 
type $\alpha = (\alpha_1,\ldots,\alpha_n)$ and order $k+1$ satisfies the following differential equation:
\begin{equation}
\label{temporary-label3}
\sum_{l=1}^n \alpha_l y_l \frac{\partial f_i}{\partial y_l}({\bf y}) = (k+\alpha_i) f_i({\bf y}) \qquad (i=1,\ldots,n).
\end{equation}
This equation can be rephrased as
\begin{equation}\label{temporay-label4}
(D f)({\bf y}) \Lambda_\alpha \mathbf{y} = \left( k I+ \Lambda_\alpha \right) f({\bf y}).
\end{equation}
\end{lem}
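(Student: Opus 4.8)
The plan is to reduce the statement to the scalar identity already established in Lemma~\ref{temporary-label2} by arguing componentwise. By the definition of a quasi-homogeneous vector field of type $\alpha$ and order $k+1$, each component $f_i$ is itself a quasi-homogeneous \emph{function} of type $\alpha$ and order $k+\alpha_i$. I would therefore invoke Lemma~\ref{temporary-label2} applied to $f_0 = f_i$, with the scalar order $k$ appearing there replaced by $k+\alpha_i$. This immediately produces
\[
\sum_{l=1}^n \alpha_l y_l \frac{\partial f_i}{\partial y_l}({\bf y}) = (k+\alpha_i) f_i({\bf y}), \qquad i = 1,\ldots,n,
\]
which is exactly \eqref{temporary-label3}. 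Equivalently, one could bypass the earlier lemma and obtain this directly from the defining scaling relation $f_i(s^{\Lambda_\alpha}{\bf y}) = s^{k+\alpha_i} f_i({\bf y})$ by differentiating both sides in $s$ via the chain rule and evaluating at $s=1$, mirroring the proof of Lemma~\ref{temporary-label2}; the only point to track is that the scaling exponent of the $i$-th component is $k+\alpha_i$ rather than a uniform $k$.

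The remaining step is to repackage the $n$ scalar identities into the matrix form \eqref{temporay-label4}. The key observation is that the left-hand side $\sum_{l=1}^n \alpha_l y_l \,\partial f_i/\partial y_l$ is precisely the $i$-th entry of the product $(Df)({\bf y})\,\Lambda_\alpha {\bf y}$, since $\Lambda_\alpha {\bf y} = (\alpha_1 y_1,\ldots,\alpha_n y_n)^T$ and the $(i,l)$ entry of the Jacobian $(Df)({\bf y})$ is $\partial f_i/\partial y_l$. Likewise, the right-hand side $(k+\alpha_i) f_i({\bf y})$ is the $i$-th entry of $(kI + \Lambda_\alpha) f({\bf y})$, because $kI + \Lambda_\alpha = \diag(k+\alpha_1,\ldots,k+\alpha_n)$. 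Stacking over $i$ yields \eqref{temporay-label4}.

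Since the whole argument is a componentwise application of an already-proven scalar result followed by a transcription into matrix notation, I do not anticipate any genuine obstacle. The only care required is the correct assignment of the order $k+\alpha_i$ to the $i$-th component (so that the diagonal matrix $kI+\Lambda_\alpha$ emerges) and a consistent index convention for the Jacobian, so that the two sides of the matrix identity align entry by entry.
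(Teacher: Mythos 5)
Your proposal is correct and follows essentially the same route as the paper's own proof: apply Lemma~\ref{temporary-label2} componentwise to each $f_i$ (which is quasi-homogeneous of order $k+\alpha_i$) to obtain \eqref{temporary-label3}, and then rewrite the $n$ scalar identities as the matrix identity \eqref{temporay-label4} by identifying the Jacobian entries and the diagonal matrix $kI+\Lambda_\alpha$. The alternative you mention (differentiating the scaling relation in $s$ and setting $s=1$) is just the proof of Lemma~\ref{temporary-label2} inlined, so it is not a genuinely different argument.
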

\begin{proof}
By Lemma~\ref{temporary-label2}, 
we obtain \eqref{temporary-label3}.
For \eqref{temporay-label4},
we recall that $Df=(\frac{\partial f_i}{\partial y_l})$ is Jacobian matrix, 
while $kI + \Lambda_\alpha$ is the diagonal matrix with diagonal entries $k+\alpha_i$.
Finally $\mathbf{y} = (y_1,\ldots,y_n)^T$ is the column vector,
so that the left-hand side of \eqref{temporay-label4} is the product of two matrices and one column vector.
\end{proof}

Throughout successive sections, consider an (autonomous) $C^r$ vector field\footnote{
$C^1$-smoothness is sufficient to consider the correspondence discussed in Section \ref{section-correspondence} in the present paper.
$C^2$-smoothness is actually applied to justifying multi-order asymptotic expansions of blow-up solutions, which is discussed in Part I \cite{asym1}.
} (\ref{ODE-original}) with $r\geq 2$, 
where $f: \mathbb{R}^n \to \mathbb{R}^n$ is asymptotically quasi-homogeneous of type $\alpha = (\alpha_1,\ldots, \alpha_n)$ and order $k+1$ at infinity.

%
%
%
%

\subsection{Quasi-parabolic compactifications}
\label{sec:global}

Here we review an example of compactifications which embed the original (locally compact) phase space into a compact manifold to characterize \lq\lq infinity" as a bounded object.
While there are several choices of compactifications, the following compactification to characterize {\em dynamics at infinity} is applied here.

\begin{dfn}[Quasi-parabolic compactification, \cite{MT2020_1}]\rm
\label{dfn-quasi-para}
Let the type $\alpha = (\alpha_1,\ldots, \alpha_n)\in \mathbb{N}^n$ fixed.
Let $\{\beta_i\}_{i=1}^n$ be the collection of natural numbers so that 
\begin{equation}
\label{LCM}
\alpha_1 \beta_1 = \alpha_2 \beta_2 = \cdots = \alpha_n \beta_n \equiv c \in \mathbb{N}
\end{equation}
is the least common multiplier.
In particular, $\{\beta_i\}_{i=1}^n$ is chosen to be the smallest among possible collections.
Let $p({\bf y})$ be a functional given by
\begin{equation}
\label{func-p}
p({\bf y}) \equiv \left( y_1^{2\beta_1} +  y_2^{2\beta_2} + \cdots +  y_n^{2\beta_n} \right)^{1/2c}.
\end{equation}
Define the mapping $T: \mathbb{R}^n \to \mathbb{R}^n$ as the inverse of
\begin{equation*}
S({\bf x}) = {\bf y},\quad y_j = \kappa^{\alpha_j} x_j,\quad j=1,\ldots, n,
\end{equation*}
where 
\begin{equation*}
\kappa = \kappa({\bf x}) = (1- p({\bf x})^{2c})^{-1} \equiv \left( 1 - \sum_{j=1}^n x_j^{2\beta_j}\right)^{-1}.
\end{equation*}
We say the mapping $T$ the {\em quasi-parabolic compactification (with type $\alpha$)}.
\end{dfn}

\begin{rem}
\label{rem-kappa}
The functional $\kappa = \tilde \kappa({\bf y})$ as a functional determined by ${\bf y}$ is implicitly determined by $p({\bf y})$.
Details of such a characterization of $\kappa$ in terms of ${\bf y}$, and the bijectivity and smoothness of $T$ are shown in \cite{MT2020_1} with a general class of compactifications including quasi-parabolic compactifications.
\end{rem}

\begin{rem}
\label{rem-zero-comp-compactification}
\KMb{When there is a component $i_0$ such that $\alpha_{i_0} = 0$, we apply the compactification {\em only for components with nonzero $\alpha_i$}.}
\end{rem}

As proved in \cite{MT2020_1}, $T$ maps $\mathbb{R}^n$ one-to-one onto
\begin{equation*}
\mathcal{D} \equiv \{{\bf x}\in \mathbb{R}^n \mid p({\bf x}) < 1\}.
\end{equation*}
Infinity in the original coordinate then corresponds to a point on the boundary
\begin{equation*}
\mathcal{E} = \{{\bf x} \in \mathbb{R}^n \mid p({\bf x}) = 1\}.
\end{equation*}
\begin{dfn}\rm
We call the boundary $\mathcal{E}$ of $\mathcal{D}$ the {\em horizon}.
\end{dfn}

It is easy to understand the geometric nature of the present compactification when $\alpha = (1,\ldots, 1)$, in which case $T$ is defined as
\begin{equation*}
x_j = \frac{2y_j}{1+\sqrt{1+4\|{\bf y}\|^2}}\quad \Leftrightarrow \quad y_j = \frac{x_j}{1-\|{\bf x}\|^2},\quad j=1,\ldots, n.
\end{equation*}
See \cite{EG2006, G2004} for the homogeneous case, which is called the {\em parabolic compactification}.
In this case, the functional $\kappa = \tilde \kappa({\bf y})$ mentioned in Remark \ref{rem-kappa} is explicitly determined by
\begin{equation*}
\kappa = \tilde \kappa({\bf y}) = \frac{1+\sqrt{1+4\|{\bf y}\|^2}}{2} = \kappa({\bf x}) = \frac{1}{1-\|{\bf x}\|^2}.
\end{equation*}
A homogeneous compactification of this kind is shown in Figure \ref{fig:compactification}-(a), while an example of quasi-parabolic one is shown in Figure \ref{fig:compactification}-(b).

\begin{figure}[h!]\em
\begin{minipage}{0.5\hsize}
\centering
\includegraphics[width=6cm]{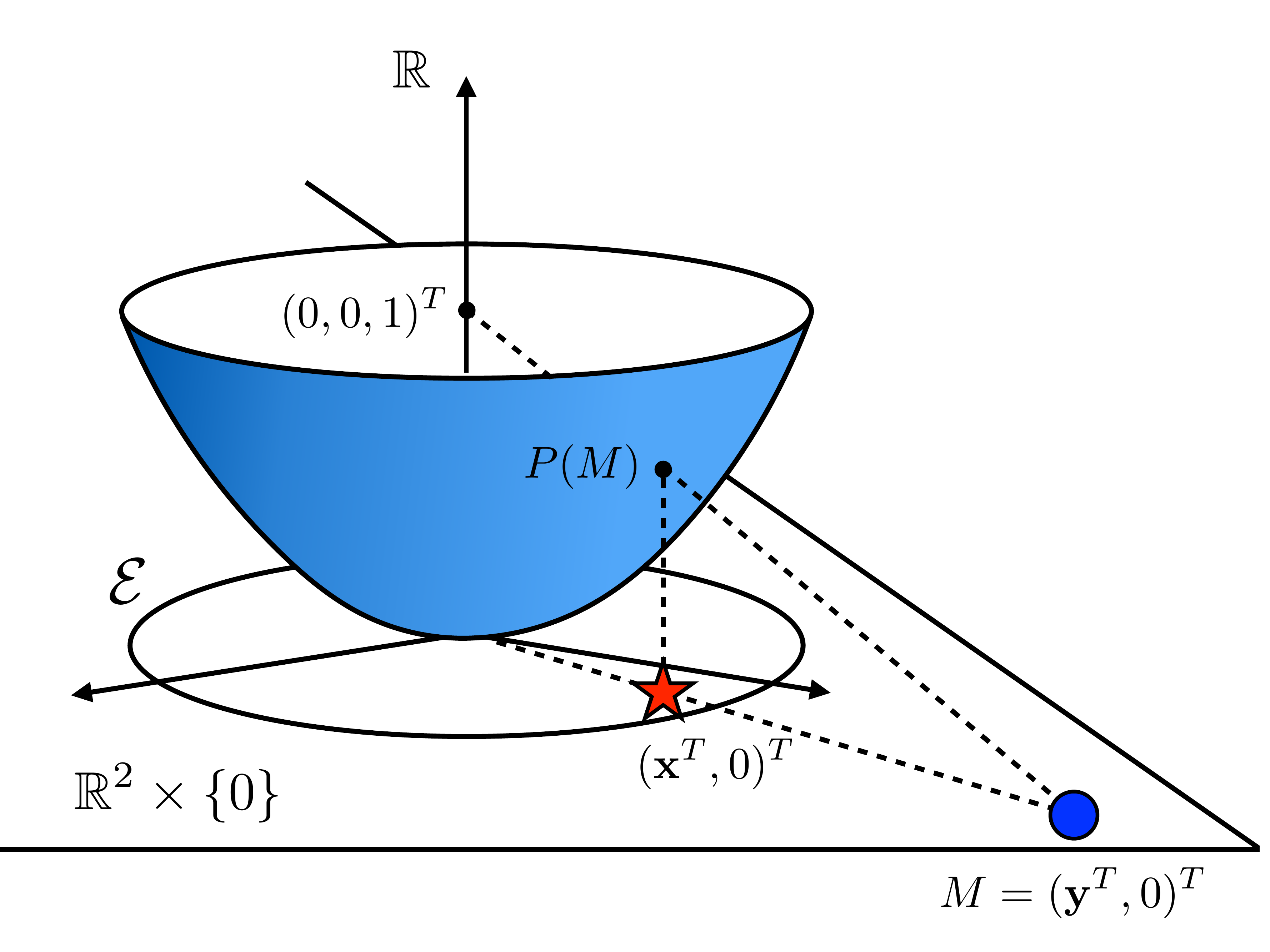}
(a)
\end{minipage}
\begin{minipage}{0.5\hsize}
\centering
\includegraphics[width=6cm]{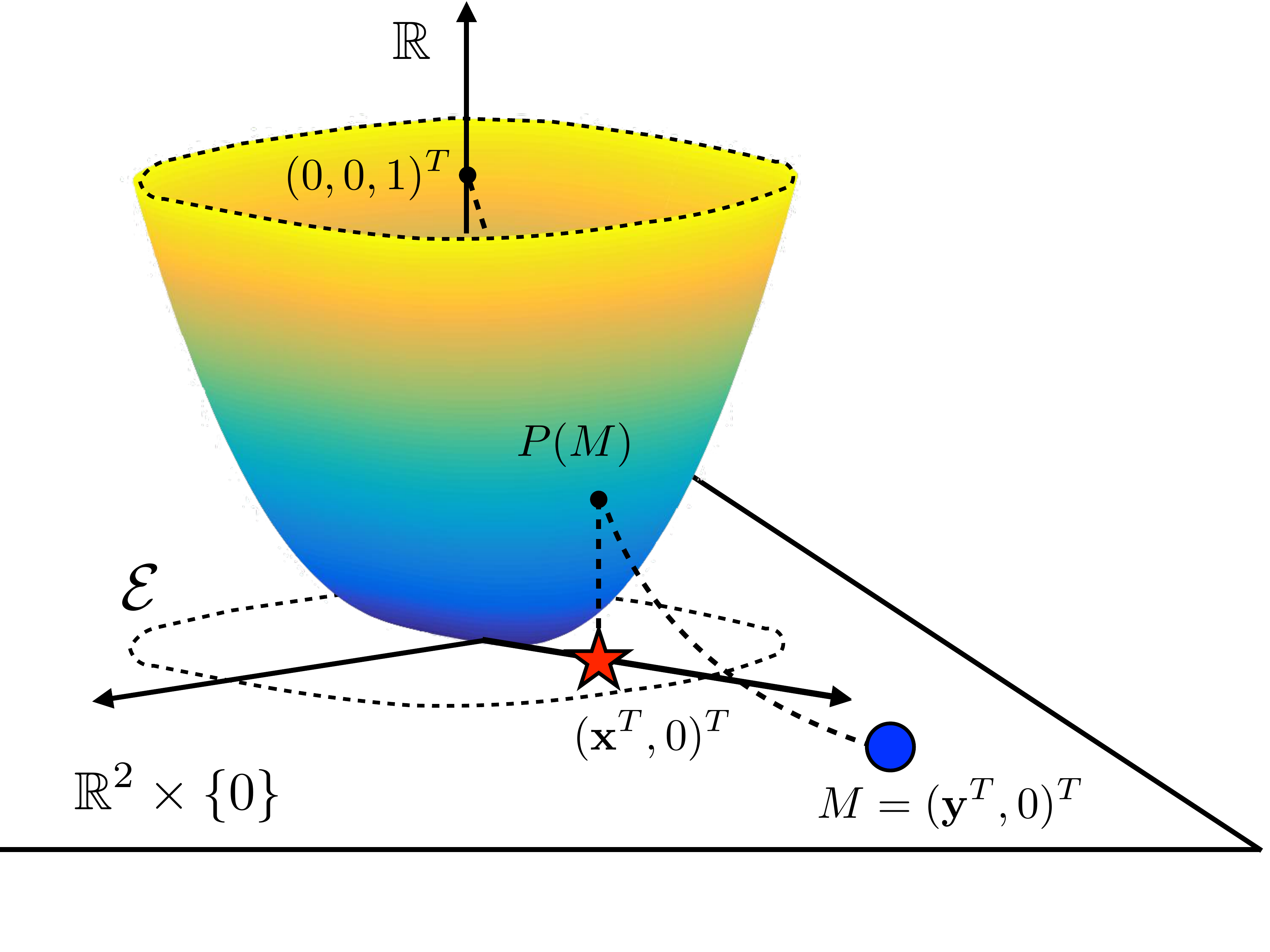}
(b)
\end{minipage}
\caption{Parabolic-type compactifications of $\mathbb{R}^2$}
\label{fig:compactification}
\par
(a): Parabolic compactification with type $\alpha = (1,1)$.
The image ${\bf x}$ of the original point ${\bf y}\in \mathbb{R}^2$ is defined as the projection of the intersection point $P(M)\in \mathcal{H}$ determined by the paraboloid $x_1^2 + x_2^2 = x_3$ in $\mathbb{R}^3$ and the line segment connecting $M = ({\bf y}^T,0)^T\in \mathbb{R}^3$ and the focus point $(0,0,1)^T\in \mathbb{R}^3$, onto the original phase space $\mathbb{R}^2$.
The horizon is identified with the circle $\{x_1^2 + x_2^2 = 1, x_3 = 1\}$ on the parabola. 
The precise definition is its projection onto $\mathbb{R}^2\times \{0\}$.
(b): Quasi-parabolic compactification with type $\alpha = (2,1)$.
\end{figure}

\begin{rem}
\label{rem-compactification}
Global-type compactifications like parabolic ones are typically introduced as {\em homogeneous} ones, namely $\alpha_1 = \cdots = \alpha_n = 1$.
Simple examples of global compactifications are {\em Bendixson}, or {\em one-point} compactification (e.g. embedding of $\mathbb{R}^n$ into $S^n$) and {\em Poincar\'{e}} compactification (i.e., embedding of $\mathbb{R}^n$ into the hemisphere).
Among such compactifications, Poincar\'{e}-type ones are considered to be a prototype of {\em admissible} compactifications discussed in \cite{MT2020_1} (see also \cite{EG2006}), which distinguishes directions of infinity and characterizes dynamics at infinity appropriately, as mentioned in Section \ref{label-dynamics-infinity}.
Quasi-homogeneous-type, global compactifications are introduced in \cite{Mat2018, MT2020_1} as quasi-homogeneous counterparts of homogeneous compactifications.
However, the Poincar\'{e}-type compactifications include radicals in the definition (e.g. \cite{Mat2018}), which cause the loss of smoothness of vector fields on the horizon (mentioned below) and the applications to dynamics at infinity are restrictive in general.
On the other hand, {\em homogeneous}, parabolic-type compactifications were originally introduced in \cite{G2004} so that unbounded rational functions are transformed into rational functions. 
In particular, smoothness of rational functions are preserved through the transformation.
The present compactification is the quasi-homogeneous counterpart of the homogeneous parabolic compactifications.
\par
There are alternate compactifications which are defined {\em locally}, known as e.g. {\em Poincar\'{e}-Lyapunov disks} (e.g. \cite{DH1999, DLA2006}) and are referred to as {\em directional compactifications} in e.g. \cite{Mat2018, Mat2019, MT2020_2}.
These compactifications are quite simple and are widely used to study dynamics at infinity.
However, one chart of these compactifications can lose symmetric features of dynamics at infinity and perspectives of correspondence between dynamics at infinity and asymptotic behavior of blow-up solutions.
This is the reason why we have chosen parabolic-type compactifications as the one of our central issues.
\end{rem}

\subsection{Dynamics at infinity and blow-up characterization}
\label{label-dynamics-infinity}
Once we fix a compactification associated with the type $\alpha = (\alpha_1, \ldots, \alpha_n)$ of the vector field $f$ with order $k+1$, we can derive the vector field which makes sense including the horizon.
Then the {\em dynamics at infinity} makes sense through the appropriately transformed vector field called the {\em desingularized vector field}, \KMb{denoted by $g$}.
The common approach is twofold.
Firstly, we rewrite the vector field (\ref{ODE-original}) with respect to the new variable used in compactifications.
Secondly, we introduce the time-scale transformation of the form $d\tau = q({\bf x})\kappa({\bf x}(t))^k dt$ for some function $q({\bf x})$ which is bounded including the horizon. 
We then obtain the vector field with respect to the new time variable $\tau$, which is continuous, including the horizon.

\begin{rem}
\label{rem-choice-cpt}
Continuity of the desingularized vector field $g$ including the horizon is guaranteed by the smoothness of $f$ and asymptotic quasi-homogeneity (\cite{Mat2018}).
In the case of parabolic-type compactifications, $g$ inherits the smoothness of $f$ including the horizon, which is not always the case of other compactifications in general. 
Details are discussed in \cite{Mat2018}.
\end{rem}

\begin{dfn}[Time-scale desingularization]\rm 
Define the new time variable $\tau$ by
\begin{equation}
\label{time-desing-para}
d\tau = (1-p({\bf x})^{2c})^{-k}\left\{1-\frac{2c-1}{2c}(1-p({\bf x})^{2c}) \right\}^{-1}dt,
\end{equation}
equivalently,
\begin{equation*}
t - t_0 = \int_{\tau_0}^\tau \left\{1-\frac{2c-1}{2c}(1-p({\bf x}(\tau))^{2c}) \right\}(1-p({\bf x}(\tau))^{2c})^k d\tau,
\end{equation*}
where $\tau_0$ and $t_0$ denote the correspondence of initial times, ${\bf x}(\tau) = T({\bf y}(\tau))$ and ${\bf y}(\tau)$ is a solution ${\bf y}(t)$ under the parameter $\tau$.
We shall call (\ref{time-desing-para}) {\em the time-scale desingularization of order $k+1$}.
\end{dfn}
The change of coordinate and the above desingularization yield the following vector field $g = (g_1, \ldots, g_n)^T$, which is continuous on $\overline{\mathcal{D}} = \{p({\bf x}) \leq 1\}$:
\begin{align}
\label{desing-para}
\dot x_i \equiv \frac{dx_i}{d\tau} = g_i({\bf x}) = \left(1-\frac{2c-1}{2c}(1-p({\bf x})^{2c}) \right) \left\{ \tilde f_i({\bf x}) - \alpha_i x_i \sum_{j=1}^n (\nabla \kappa)_j \kappa^{\alpha_j - 1}\tilde f_j({\bf x})\right\},
\end{align}
where 
\begin{equation}
\label{f-tilde}
\tilde f_j(x_1,\ldots, x_n) := \kappa^{-(k+\alpha_j)} f_j(\kappa^{\alpha_1}x_1, \ldots, \kappa^{\alpha_n}x_n),\quad j=1,\ldots, n,
\end{equation}
and
$\nabla \kappa = \nabla_{\bf x} \kappa = ((\nabla_{\bf x} \kappa)_1, \ldots, (\nabla_{\bf x} \kappa)_n)^T$ is
\begin{equation*}
(\nabla_{\bf x} \kappa)_j 
= \frac{\kappa^{1-\alpha_j} x_j^{2\beta_j-1}}{\alpha_j \left(1- \frac{2c-1}{2c} (1 - p({\bf x})^{2c} )\right)},\quad j=1,\ldots, n,
\end{equation*}
as derived in \cite{MT2020_1}.
In particular, the vector field $g$ is written as follows:
\begin{align}
\label{desing-vector}
g({\bf x}) = \left(1-\frac{2c-1}{2c}(1-p({\bf x})^{2c}) \right)\tilde f({\bf x}) - G({\bf x})\Lambda_\alpha {\bf x},
\end{align}
where $\tilde f = (\tilde f_1,\ldots, \tilde f_n)^T$ and
\begin{align}
\label{Gx}
G({\bf x}) &\equiv \sum_{j=1}^n \frac{x_j^{2\beta_j-1}}{\alpha_j}\tilde f_j({\bf x}).
\end{align}
Smoothness of $f$ and the asymptotic quasi-homogeneity guarantee the smoothness of the right-hand side $g$ of (\ref{desing-para}) including the horizon $\mathcal{E}\equiv \{p({\bf x}) = 1\}$.
In particular, {\em dynamics at infinity}, such as divergence of solution trajectories to specific directions,  is characterized through dynamics generated by (\ref{desing-para}) around the horizon. 
See \cite{Mat2018, MT2020_1} for details.

\begin{rem}[Invariant structure]
\label{rem-invariance}
The horizon $\mathcal{E}$ is a codimension one invariant submanifold of $\overline{\mathcal{D}}$. 
Indeed, direct calculations yield that
\begin{equation*}
\left. \frac{d}{d \tau}p({\bf x}(\tau))^{2c}\right|_{\tau=0} = 0\quad \text{ whenever }\quad {\bf x}(0)\in \mathcal{E}.
\end{equation*}
See e.g. \cite{Mat2018}, where detailed calculations are shown in a similar type of global compactifications.
We shall apply this invariant structure to extracting the detailed blow-up structure later.
\end{rem}

\subsection{Type-I stationary blow-up}
Through the compactification we have introduced, 
dynamics around the horizon characterize dynamics at infinity, including blow-up behavior.
\par
For an equilibrium $\bar {\bf x}$ for a desingularized vector field $g$, let 
\begin{equation*}
W_{\rm loc}^s(\bar {\bf x}) = W_{\rm loc}^s(\bar {\bf x};g) := \{{\bf x}\in U \mid |\varphi_g(t,{\bf x}) - \bar {\bf x}| \to 0\, \text{ as }\,t\to +\infty\}
\end{equation*}
be the {\em (local) stable set} of $\bar {\bf x}$ for the dynamical system generated by $g$, where $U$ is a neighborhood of $\bar {\bf x}$ in $\mathbb{R}^n$ or an appropriate phase space, and $\varphi_g$ is the flow generated by $g$.
In a special case where $\bar {\bf x}$ is a {\em hyperbolic equilibrium} for $g$, that is, an equilibrium satisfying ${\rm Spec}(Dg(\bar {\bf x})) \cap i\mathbb{R} = \emptyset$, the stable set $W_{\rm loc}^s(\bar {\bf x})$ admits a smooth manifold structure in a small neighborhood of $\bar {\bf x}$ (see {\em the Stable Manifold Theorem} in e.g. \cite{Rob}). 
In such a case, the set is referred to as {\em (local) stable manifold} of $\bar {\bf x}$.
Here we review a result for characterizing blow-up solutions by means of stable manifolds of equilibria, which is shown in \cite{MT2020_1} (cf. \cite{Mat2018}).

\begin{thm}[Stationary blow-up, \cite{Mat2018, MT2020_1}]
\label{thm:blowup}
Assume that the desingularized vector field $g$ given by (\ref{desing-para}) associated with (\ref{ODE-original}) admits an equilibrium on the horizon ${\bf x}_\ast \in \mathcal{E}$.
Suppose that ${\bf x}_\ast$ is hyperbolic, in particular, the Jacobian matrix $Dg({\bf x}_\ast)$ of $g$ at ${\bf x}_\ast$ possesses $n_s > 0$ (resp. $n_u = n-n_s$) eigenvalues with negative (resp. positive) real part.
If there is a solution ${\bf y}(t)$ of (\ref{ODE-original}) with a bounded initial point ${\bf y}(0)$ whose image ${\bf x} = T({\bf y})$ is on the stable manifold\footnote{
In the present case, a neighborhood $U$ of ${\bf x}_\ast$ determining $W^s_{\rm loc}$ is chosen as a subset of $\overline{\mathcal{D}}$.
} $W_{\rm loc}^s({\bf x}_\ast; g)$, then $t_{\max} < \infty$ holds; namely, ${\bf y}(t)$ is a blow-up solution.
Moreover,
\begin{equation*}
\kappa\equiv \kappa({\bf x}(t)) \sim \tilde c\theta(t)^{-1/k}\quad \text{ as }\quad t\to t_{\max}\KMc{-0},
\end{equation*}
where $\tilde c > 0$ is a constant.
Finally, if the $j$-th component $x_{\ast, j}$ of ${\bf x}_\ast$ is not zero, then we also have
\begin{equation*}
y_j(t) \sim \KMb{\tilde c_j}\theta(t)^{-\alpha_j /k}\quad \text{ as }\quad t\to t_{\max},
\end{equation*}
where $\KMb{\tilde c_j}$ is a constant with the same sign as $y_j(t)$ as $t\to t_{\max}$.
\end{thm}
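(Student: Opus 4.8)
The plan is to reduce the whole statement to the behaviour of a single scalar quantity, namely $u:=\kappa^{-1}=1-p({\bf x})^{2c}$, whose evolution along the desingularized flow is governed by the radial factor $G$ from \eqref{Gx}. First I would invoke the Stable Manifold Theorem: since ${\bf x}_\ast\in\mathcal{E}$ is hyperbolic, any trajectory whose image ${\bf x}(\tau)=T({\bf y}(\tau))$ lies on $W^s_{\rm loc}({\bf x}_\ast;g)$ satisfies ${\bf x}(\tau)\to{\bf x}_\ast$ as $\tau\to+\infty$, and in fact does so exponentially, $\|{\bf x}(\tau)-{\bf x}_\ast\|=O(e^{-\delta\tau})$ for some $\delta>0$. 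Because ${\bf y}(0)$ is bounded, ${\bf x}(0)$ lies in the open domain $\mathcal{D}$, so $u(0)>0$; since the horizon $\mathcal{E}=\{u=0\}$ is invariant (Remark \ref{rem-invariance}), the value $u(\tau)$ cannot cross $0$ and stays positive for all finite $\tau$, while convergence to ${\bf x}_\ast\in\mathcal{E}$ forces $u(\tau)\to0^{+}$.

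Next I would establish the central identity. Differentiating $P:=p({\bf x})^{2c}=\sum_i x_i^{2\beta_i}$ along $g$, substituting the form \eqref{desing-vector} of $g$ and using $\alpha_i\beta_i=c$ together with the definition \eqref{Gx} of $G$, a short computation collapses the two contributions and yields
\[
\frac{dP}{d\tau}=G({\bf x})\,(1-P),\qquad \frac{du}{d\tau}=-\,G({\bf x})\,u,\qquad \frac{d\kappa}{d\tau}=G({\bf x})\,\kappa .
\]
Integrating the linear equation for $u$ gives $u(\tau)=u(\tau_0)\exp\!\big(-\int_{\tau_0}^{\tau}G({\bf x}(s))\,ds\big)$. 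Setting $c_\ast:=G({\bf x}_\ast)$ and using $G({\bf x}(s))\to c_\ast$, the requirement $u\to0^{+}$ forces $\int^{\infty}G\,ds=+\infty$, hence $c_\ast\ge0$; since the eigenvalue of $Dg({\bf x}_\ast)$ in the direction transverse to $\mathcal{E}$ equals $-c_\ast$, hyperbolicity rules out $c_\ast=0$, so $c_\ast>0$. The exponential approach of ${\bf x}(\tau)$ and the smoothness of $G$ make $\int^{\infty}\!\big(G({\bf x}(s))-c_\ast\big)\,ds$ absolutely convergent, which upgrades the crude rate to the sharp asymptotics $u(\tau)\sim C e^{-c_\ast\tau}$ and $\kappa(\tau)\sim C^{-1}e^{c_\ast\tau}$ for a definite constant $C>0$.

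Then I would recover the time variable and the blow-up rate from the desingularization \eqref{time-desing-para}. Integrating $dt=\{1-\tfrac{2c-1}{2c}u\}u^{k}\,d\tau$ and using $u^{k}\sim C^{k}e^{-kc_\ast\tau}$ shows the integral converges at $\tau=+\infty$, so $t_{\max}<\infty$ and ${\bf y}$ blows up; moreover $\theta(t)=\int_{\tau}^{\infty}\{1-\tfrac{2c-1}{2c}u\}u^{k}\,d\tau'\sim (kc_\ast)^{-1}u(\tau)^{k}$, whence $u\sim(kc_\ast\theta)^{1/k}$ and $\kappa=u^{-1}\sim\tilde c\,\theta^{-1/k}$ with $\tilde c=(kc_\ast)^{-1/k}>0$. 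Finally, for a component with $x_{\ast,j}\neq0$, the relation $y_j=\kappa^{\alpha_j}x_j$ together with $x_j(\tau)\to x_{\ast,j}$ gives $y_j\sim x_{\ast,j}\kappa^{\alpha_j}\sim x_{\ast,j}\,\tilde c^{\,\alpha_j}\theta^{-\alpha_j/k}$, so $\tilde c_j=x_{\ast,j}\tilde c^{\,\alpha_j}$ has the same sign as $x_{\ast,j}$ and hence as $y_j(t)$ near $t_{\max}$.

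The main obstacle is passing from the crude exponential rate $u=e^{-(c_\ast+o(1))\tau}$ to the sharp equivalences with definite constants, i.e. justifying $u\,e^{c_\ast\tau}\to C>0$ and $\theta\sim(kc_\ast)^{-1}u^{k}$. This rests on the absolute convergence of $\int^{\infty}\!\big(G({\bf x}(s))-c_\ast\big)\,ds$, which in turn relies on the \emph{exponential} approach to ${\bf x}_\ast$ furnished by the Stable Manifold Theorem and the Lipschitz continuity of $G$; deriving the identity $\dot u=-G({\bf x})u$ and verifying $c_\ast>0$ are the remaining points demanding care.
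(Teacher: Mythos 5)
Your proof is essentially correct, but note first that the paper itself contains no proof of Theorem \ref{thm:blowup}: it is stated as a review of results from \cite{Mat2018, MT2020_1}, so there is no internal argument to compare against. What you have written is a self-contained reconstruction whose ingredients coincide with material the paper develops later for other purposes: your central identity $du/d\tau = -G({\bf x})\,u$ with $u=\kappa^{-1}=1-p({\bf x})^{2c}$ is exactly Lemma \ref{lem-G}; your deduction that $u\to 0^{+}$ forces $\int^{\infty}G\,ds=+\infty$, hence $C_\ast\geq 0$, is precisely the proof of Lemma \ref{lem-Cast-nonneg}; and your constant $\tilde c=(kC_\ast)^{-1/k}$ agrees with the scaling parameter $r_{{\bf x}_\ast}=(kC_\ast)^{-1/k}$ of Theorem \ref{thm-balance-1to1}, a reassuring consistency check. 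The integration of the time-change $dt=\{1-\tfrac{2c-1}{2c}u\}u^{k}d\tau$, the resulting finiteness of $t_{\max}$, the asymptotics $\theta\sim (kC_\ast)^{-1}u^{k}$, and the componentwise conclusion $y_j\sim x_{\ast,j}\tilde c^{\,\alpha_j}\theta^{-\alpha_j/k}$ are all carried out correctly.

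One step deserves more care: you rule out $C_\ast=0$ by asserting that the eigenvalue of $Dg({\bf x}_\ast)$ transverse to $\mathcal{E}$ equals $-C_\ast$. In the paper this eigenvalue statement (Theorem \ref{thm-ev-special} and Corollary \ref{cor-Cast-pos}) is proved only under Assumption \ref{ass-f}, which is \emph{not} a hypothesis of Theorem \ref{thm:blowup}, so you cannot simply import it. Fortunately the weaker claim you actually need follows from the identity you already derived, with no assumption on the residual terms: the relation $\nabla(1-p^{2c})^{T}g({\bf x})=-G({\bf x})\bigl(1-p({\bf x})^{2c}\bigr)$ holds on all of $\overline{\mathcal{D}}$, and differentiating it at ${\bf x}_\ast$ (where $g({\bf x}_\ast)=0$ and $1-p({\bf x}_\ast)^{2c}=0$) gives $Dg({\bf x}_\ast)^{T}\nabla p({\bf x}_\ast)=-C_\ast\,\nabla p({\bf x}_\ast)$, so $\nabla p({\bf x}_\ast)\neq 0$ is a \emph{left} eigenvector and $-C_\ast\in{\rm Spec}(Dg({\bf x}_\ast))$; hyperbolicity then yields $C_\ast\neq 0$. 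Alternatively you can bypass spectra entirely: your own observation that $\int^{\infty}\bigl(G({\bf x}(s))-C_\ast\bigr)ds$ converges absolutely (exponential convergence on the stable manifold plus Lipschitz continuity of $G$) is incompatible with $\int^{\infty}G\,ds=+\infty$ unless $C_\ast>0$. With either repair, your argument is complete.
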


The key point of the theorem is that {\em blow-up solutions for (\ref{ODE-original}) are characterized as trajectories on \KMb{local} stable manifolds of equilibria or general invariant sets\footnote{
In \cite{Mat2018}, a characterization of blow-up solutions with infinite-time oscillations in $t < t_{\max}$ and unbounded amplitude is also provided by means of time-periodic orbits on the horizon for desingularized vector fields, which referred to as {\em periodic blow-up}.
Hyperbolicity ensures not only blow-up behavior of solutions but their asymptotic behavior with the specific form.
Several case studies of blow-up solutions beyond hyperbolicity are shown in \cite{Mat2019}.
} on the horizon $\mathcal{E}$ for the desingularized vector field}.
Investigations of blow-up structure are therefore reduced to those of stable manifolds of invariant sets, such as (hyperbolic) equilibria, for the associated vector field.
Moreover, the theorem also claims that hyperbolic equilibria on the horizon induce {\em type-I blow-up. 
That is, the leading term of the blow-up behavior is determined by the type $\alpha$ and the order $k+1$ of $f$.}
\KMj{In particular, Theorem \ref{thm:blowup} can be used to verify the existence of blow-up solutions assumed in our construction of their asymptotic expansions.}
\par
The blow-up time $t_{\max}$ is explicitly given by (\ref{time-desing-para}):
\begin{equation}
\label{blow-up-time}
t_{\max} = t_0 + \frac{1}{2c}\int_{\tau_0}^\infty \left\{1+ (2c-1)p({\bf x}(\tau))^{2c} \right\}(1-p({\bf x}(\tau))^{2c})^k d\tau.
\end{equation}
The above formula is consistent with a well-known fact that $t_{\max}$ depends on initial points ${\bf x}_0 = {\bf x}(\tau_0)$.

\begin{rem}
Theorem \ref{thm:blowup} itself does not tell us the asymptotic behavior of the component $y_j(t)$ as $t\to t_{\max}-0$ when $x_{\ast, j} = 0$.
Nevertheless, asymptotic expansion of the solution can reveal the detailed behavior near $t=t_{\max}$, as demonstrated in Part I \cite{asym1}.
\end{rem}

We end this section by providing a lemma for a function appeared in (\ref{desing-para}), which is essential to characterize equilibria on the horizon from the viewpoint of asymptotic expansions. 
The gradient of the horizon $\mathcal{E} = \{p({\bf x}) = 1\}$ at ${\bf x}\in \mathcal{E}$ is given by
\begin{equation*}
\nabla p({\bf x}) = \frac{p({\bf x})^{\KMc{1-2c}}}{c} \left( \beta_1x_1^{2\beta_1-1}, \ldots, \beta_nx_n^{2\beta_n-1} \right)^T = \frac{1}{c} \left( \beta_1x_1^{2\beta_1-1}, \ldots, \beta_nx_n^{2\beta_n-1} \right)^T.
\end{equation*}
In particular, 
\begin{equation}
\label{grad-xast}
\nabla p({\bf x}_\ast) = \frac{1}{c}\left( \beta_1x_{\ast,1}^{2\beta_1-1}, \ldots, \beta_n x_{\ast,n}^{2\beta_n-1} \right)^T
\end{equation}
holds at an equilibrium ${\bf x}_\ast \in \mathcal{E}$.
Similarly, we observe that
\begin{equation*}
\nabla (p({\bf x})^{2c}) = 2\left( \beta_1x_1^{2\beta_1-1}, \ldots, \beta_nx_n^{2\beta_n-1} \right)^T 
\end{equation*}
for any ${\bf x}\in \overline{\mathcal{D}}.$
Using the gradient, the function $G({\bf x})$ in (\ref{Gx}) is also written by
\begin{equation*}
G({\bf x}) = \sum_{j=1}^n \frac{\beta_j}{c}x_j^{2\beta_j-1}\tilde f_j({\bf x}) = \frac{1}{2c}\nabla (p({\bf x})^{2c})^T \tilde f({\bf x}). 
\end{equation*}

\begin{lem}
\label{lem-G}
\begin{equation*}
\kappa^{-1} \frac{d\kappa}{d\tau} = G({\bf x}),
\end{equation*}
where $G({\bf x})$ is given in (\ref{Gx}).
\end{lem}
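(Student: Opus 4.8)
The plan is to avoid differentiating $\kappa$ through the unwieldy componentwise gradient formula for $(\nabla_{\bf x}\kappa)_j$, and instead to differentiate the polynomial $p({\bf x})^{2c} = \sum_{j=1}^n x_j^{2\beta_j}$, which is far cleaner. First I would write $\kappa = (1-p({\bf x})^{2c})^{-1}$ and apply the chain rule along a solution ${\bf x}(\tau)$ of the desingularized system, obtaining
\[
\frac{d\kappa}{d\tau} = \kappa^2\,\frac{d}{d\tau}\bigl(p({\bf x})^{2c}\bigr) = \kappa^2\,\nabla(p({\bf x})^{2c})^T g({\bf x}),
\]
so that proving the lemma reduces to establishing the single identity $\kappa\,\nabla(p^{2c})^T g = G$.

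Next I would substitute the closed form $g = A\,\tilde f - G\,\Lambda_\alpha{\bf x}$ from (\ref{desing-vector}), abbreviating the scalar prefactor by $A := 1-\tfrac{2c-1}{2c}(1-p^{2c})$. This splits $\nabla(p^{2c})^T g$ into two pieces. For the first piece I would use the identity recorded just before the lemma, namely $G = \tfrac{1}{2c}\nabla(p^{2c})^T\tilde f$, i.e. $\nabla(p^{2c})^T\tilde f = 2cG$. For the second piece I would compute, using $\nabla(p^{2c}) = 2(\beta_1 x_1^{2\beta_1-1},\ldots,\beta_n x_n^{2\beta_n-1})^T$,
\[
\nabla(p^{2c})^T\Lambda_\alpha{\bf x} = 2\sum_{j=1}^n \alpha_j\beta_j\, x_j^{2\beta_j},
\]
and then invoke the least-common-multiple relation (\ref{LCM}), $\alpha_j\beta_j \equiv c$, which collapses this weighted sum to $2c\sum_j x_j^{2\beta_j} = 2c\,p^{2c}$.

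Combining the two pieces yields $\nabla(p^{2c})^T g = 2cG\,(A - p^{2c})$, and a one-line algebraic simplification gives $A - p^{2c} = (1-p^{2c})/2c = \kappa^{-1}/2c$. Hence $\nabla(p^{2c})^T g = \kappa^{-1}G$, and therefore $\kappa^{-1}\,d\kappa/d\tau = \kappa\,\nabla(p^{2c})^T g = G$, which is the claim.

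The computation is essentially routine once the right quantity is differentiated; the step I would flag as the crux is the use of the defining relation $\alpha_j\beta_j = c$ from (\ref{LCM}). It is precisely this quasi-homogeneous matching of the exponents $\beta_j$ to the weights $\alpha_j$ that renders the weighted sum $\sum_j \alpha_j\beta_j x_j^{2\beta_j}$ a constant multiple of $p^{2c}$, allowing the two pieces of $\nabla(p^{2c})^T g$ to recombine into a single multiple of $G$; without it the terms would not align and the clean identity would fail.
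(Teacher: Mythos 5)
Your proposal is correct and follows essentially the same route as the paper's own proof: both differentiate $p({\bf x})^{2c}$ (equivalently $\kappa^{-1}$) along the flow rather than $\kappa$ itself, substitute the closed form (\ref{desing-vector}) of $g$, and then use the two identities $\nabla(p^{2c})^T\tilde f = 2cG$ and $\nabla(p^{2c})^T\Lambda_\alpha{\bf x} = 2c\,p^{2c}$ (the latter via (\ref{LCM})) to collapse everything to $(1-p^{2c})G = \kappa^{-1}G$. The only difference is presentational: the paper writes the left-hand side as $\kappa^{-2}\,d\kappa/d\tau = -d(\kappa^{-1})/d\tau$ and leaves the two identities implicit in a single line, whereas you spell them out.
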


\begin{proof}
Direct calculations with (\ref{LCM}) yield that
\begin{align*}
\kappa^{-2} \frac{d\kappa}{d\tau} &\equiv -\frac{d(\kappa)^{-1}}{d\tau} \\
	&= \nabla (p({\bf x})^{2c})^T \frac{d{\bf x}}{d\tau}\\
	&= \nabla (p({\bf x})^{2c})^T\left( \frac{1}{2c}\left(1 + (2c-1)p({\bf x})^{2c} \right)\tilde f({\bf x}) - G({\bf x})\Lambda_\alpha {\bf x}\right) \\
	&= \left(1 + (2c-1)p({\bf x})^{2c} \right)G({\bf x}) - 2cp({\bf x})^{2c}G({\bf x})  \\
	&= (1-p({\bf x})^{2c})G({\bf x})\\
	&= \kappa^{-1}G({\bf x}).
\end{align*}
\end{proof}

\section{Correspondence between asymptotic expansions of blow-ups and dynamics at infinity}
\label{section-correspondence}

\KMa{In Part I \cite{asym1}, a systematic methodology of asymptotic expansions of blow-up solutions has been proposed}.
On the other hand, blow-up solutions can be also characterized by trajectories on the \KMi{local} stable manifold $\KMi{W^s_{\rm loc}}({\bf x}_\ast; g)$ of an equilibrium ${\bf x}_\ast$ on the horizon for the desingularized vector field $g$\KMi{, as reviewed in Section \ref{section-preliminary}}.
By definition, the manifold $\KMi{W^s_{\rm loc}}({\bf x}_\ast; g)$ consists of initial points converging to ${\bf x}_\ast$ as $\tau \to \infty$ and hence $\KMi{W^s_{\rm loc}}({\bf x}_\ast; g)$ characterizes the dependence of blow-up solutions on initial points, including the variation of $t_{\max}$.
One then expects that there is a common feature among algebraic information (\KMb{asymptotic expansions}) and geometric one (the stable manifold $\KMi{W^s_{\rm loc}}({\bf x}_\ast; g)$) for characterizing identical blow-up solutions.
\par
This section addresses several structural correspondences between asymptotic expansions of blow-up solutions and dynamics of equilibria on the horizon for desingularized vector fields.
As a corollary, we see that asymptotic expansions of blow-up solutions in the above methodology themselves provide a criterion for their existence\KMa{, as well as the existence of an intrinsic gap of stability information among two systems}.
Unless otherwise mentioned, let $g$ be the desingularized vector field \KMg{(\ref{desing-vector})} associated with $f$.

\subsection{Quick review of tools for multi-order asymptotic expansion of type-I blow-up solutions}
\label{section-review-asym}
Before \KMc{discussing correspondence of dynamical structures} among two systems, we quickly review the methodology of multi-order asymptotic expansions of type-I blow-up solutions proposed in \cite{asym1}.
The method begins with the following ansatz, which can be easily verified through the desingularized vector field and our blow-up characterization: Theorem \ref{thm:blowup}.

\begin{ass}
\label{ass-fundamental}
The asymptotically quasi-homogeneous system (\ref{ODE-original}) of type $\alpha$ and the order $k+1$ admits a solution 
\begin{equation*}
{\bf y}(t) = (y_1(t), \ldots, y_n(t))^T
\end{equation*}
 which blows up at $t = t_{\max} < \infty$ with the \KMb{type-I blow-up} behavior\KMb{, namely}\footnote{
\KMb{For two scalar functions $h_1$ and $h_2$, $h_1 \sim h_2$ as $t\to t_{\max}-0$ iff $(h_1(t)/h_2(t))\to 1$ as $t\to t_{\max}-0$. }
 }
\begin{equation}
\label{blow-up-behavior}
y_i(t) \KMb{ \sim c_i\theta(t)^{-\alpha_i / k}}, 
\quad t\to t_{\max}-0,\quad i=1,\ldots, n
\end{equation}
\KMb{for some constants $c_i\in \mathbb{R}$.}
\end{ass}
Our aim here is, under the above assumption, to write ${\bf y}(t)$ as
\begin{equation}
\label{blow-up-sol}
y_i(t) = \theta(t)^{-\alpha_i / k} Y_i(t),\quad {\bf Y}(t) = (Y_1(t), \ldots, Y_n(t))^T
\end{equation}
with the asymptotic expansion by means of {\em general asymptotic series}\footnote{
\KMb{For two scalar functions $h_1$ and $h_2$, $h_1 \ll h_2$ as $t\to t_{\max}-0$ iff $(h_1(t)/h_2(t))\to 0$ as $t\to t_{\max}-0$. 
For two vector-valued functions ${\bf h}_1$ and ${\bf h}_2$ with \KMc{${\bf h}_i = (h_{1;i}, \ldots, h_{n;i})$}, ${\bf h}_1 \ll {\bf h}_2$ iff $h_{l;1} \ll h_{l;2}$ for each $l = 1,\ldots, n$.}
}
\begin{align}
\notag
{\bf Y}(t) &= {\bf Y}_0 + \tilde {\bf Y}(t),\\
\label{Y-asym}
\tilde {\bf Y}(t) &= \sum_{j=1}^\infty {\bf Y}_j(t),\quad {\bf Y}_{j}(t) \ll {\bf Y}_{j-1}(t)\quad (t\to t_{\max}-0),\quad j=1,2,\ldots,\\
\notag
{\bf Y}_j(t) &= (Y_{j,1}(t), \ldots, Y_{j,n}(t))^T,\quad j=1,2,\ldots
\end{align}
and determine the concrete form of the factor ${\bf Y}(t)$.
As the first step, decompose the vector field $f$ into two terms as follows:
\begin{equation*}
f({\bf y}) = f_{\alpha, k}({\bf y}) + f_{\rm res}({\bf y}),
\end{equation*}
where $f_{\alpha, k}$ is the quasi-homogeneous component of $f$ and $f_{\rm res}$ is the residual (\KMe{i.e.,} lower-order) terms.
The componentwise expressions are 
\begin{equation*}
f_{\alpha, k}({\bf y}) = (f_{1;\alpha, k}({\bf y}), \ldots, f_{n;\alpha, k}({\bf y}))^T,\quad f_{\rm res}({\bf y}) = (f_{1;{\rm res}}({\bf y}), \ldots, f_{n;{\rm res}}({\bf y}))^T\KMe{,}
\end{equation*}
\KMe{respectively.
}
Substituting (\ref{blow-up-sol}) into (\ref{ODE-original}), we derive the system of ${\bf Y}(t)$, which is the following nonautonomous system:
\begin{equation}
\label{blow-up-basic}
\frac{d}{dt}{\bf Y} = \theta(t)^{-1}\left\{ -\KMf{ \frac{1}{k}\Lambda_\alpha }{\bf Y} + f_{\alpha, k}({\bf Y}) \right\} + \theta(t)^{\KMf{ \frac{1}{k}\Lambda_\alpha }} f_{{\rm res}}( \theta(t)^{-\KMf{ \frac{1}{k}\Lambda_\alpha }} {\bf Y}).
\end{equation}
From the asymptotic quasi-homogeneity of $f$, the most singular part of the above system yields the following identity which the leading term ${\bf Y}_0$ of ${\bf Y}(t)$ must satisfy.

\begin{dfn}\rm
\label{dfn-balance}
We call the identity
\begin{equation}
\label{0-balance}
-\frac{1}{k}\Lambda_\alpha {\bf Y}_0 + f_{\alpha, k}({\bf Y}_0) = 0
\end{equation}
{\em a balance law} for the blow-up solution ${\bf y}(t)$.
\end{dfn}
The next step is to derive the collection of systems \KMc{for $\{{\bf Y}_j(t)\}_{j\geq 1}$}. by means of {\em inhomogeneous linear systems}.
The key concept towards our aim is the following algebraic objects.

\begin{dfn}[Blow-up power eigenvalues]\rm
\label{dfn-blow-up-power-ev}
Suppose that a nonzero root ${\bf Y}_0$ of the balance law (\ref{0-balance}) is given.
We call the constant matrix
\begin{equation}
\label{blow-up-power-determining-matrix}
A = -\KMf{ \frac{1}{k}\Lambda_\alpha } + D f_{\alpha, k}({\bf Y}_0)
\end{equation}
the {\em \KMf{blow-up} power-determining matrix} for the blow-up solution ${\bf y}(t)$, and call the eigenvalues $\{\lambda_i\}_{i=1}^n \equiv {\rm Spec}(A)$ the {\em blow-up power eigenvalues}, where eigenvalues with nontrivial multiplicity are distinguished in this expression, except specifically noted.
\end{dfn}

Using the matrix $A$ and the Taylor expansion of the nonlinearity at ${\bf Y}_0$, we obtain the following system:
\begin{align}
\label{asym-eq}
&\frac{d}{dt} \tilde {\bf Y} = \theta(t)^{-1} \left[ A\tilde {\bf Y} + R_{\alpha, k}({\bf Y}) \right]
	+  \theta(t)^{\KMf{ \frac{1}{k}\Lambda_\alpha }}  f_{{\rm res}}\left(\theta(t)^{-\KMf{ \frac{1}{k}\Lambda_\alpha }}  {\bf Y} \right),\\
	\notag
&R_{\alpha, k}({\bf Y}) = f_{\alpha,k}({\bf Y}) - \left\{f_{\alpha,k}({\bf Y}_0) + Df_{\alpha,k}({\bf Y}_0)\tilde {\bf Y}\right\}.
\end{align}
The linear systems solving ${\bf Y}_j(t)$ for $j\geq 1$ are derived inductively from (\ref{asym-eq}), assuming the asymptotic relation (\ref{Y-asym}).
In particular, the algebraic eigenstructure of $A$ essentially determines concrete forms of ${\bf Y}_j(t)$.
\par
As a summary, the following objects play essential \KMb{roles} in determining multi-order asymptotic \KMb{expansions} of \KMb{blow-up solutions} ${\bf y}(t)$ with Assumption \ref{ass-fundamental}:
\begin{itemize}
\item Roots of the balance law (\ref{0-balance}): ${\bf Y}_0$ (not identically zero).
\item The blow-up power-determining matrix $A$ in (\ref{blow-up-power-determining-matrix}) and blow-up power eigenvalues.
\end{itemize}
\KMa{The precise expression of our asymptotic expansions of blow-up solutions is summarized in \cite{asym1}, and we omit the detail because we do not need the concrete expression of these expansions.}
Throughout the rest of this section, we derive the relationship between the above objects and the corresponding ones in desingularized vector fields.

\subsection{Balance law and equilibria on the horizon}

\KMc{The first issue for the correspondence of dynamical structures is \lq\lq equilibria" among two systems.}
Recall that equilibria for the desingularized vector field (\ref{desing-para}) associated with quasi-parabolic compactifications satisfy
\begin{equation}
\label{balance-para}
\KMg{ \left(1-\frac{2c-1}{2c}(1-p({\bf x})^{2c}) \right)\tilde f({\bf x}) = G({\bf x}) \Lambda_\alpha {\bf x}},
\end{equation}
where $G({\bf x})$ is given in (\ref{Gx}).
Equilibria ${\bf x}_\ast = (x_{\ast, 1}, \ldots, x_{\ast, n})^T$ on the horizon $\mathcal{E}$ satisfy $p({\bf x}_\ast) \equiv 1$ and hence the following identity holds:
\begin{equation}
\label{const-horizon-0}
\tilde f_i({\bf x}_\ast) = \alpha_i x_{\ast, i} G({\bf x}_\ast),
\end{equation}
equivalently
\begin{equation}
\label{const-horizon}
\frac{\tilde f_i({\bf x}_\ast)}{\alpha_i x_{\ast, i} } = G({\bf x}_\ast) \equiv C_\ast = C_\ast({\bf x}_\ast)
\end{equation}
provided $x_{\ast, i} \not = 0$.
Because at least one $x_i$ is not $0$ on the horizon, the constant $C_\ast$ is determined as a constant independent of $i$.
On the other hand, only the quasi-homogeneous part $f_{\alpha, k}$ of $f$ \KMi{involves equilibria on the horizon}.
In general, we have
\begin{align}
\notag
\tilde f_{i;\alpha, k}(\KMh{{\bf x}}) &= \kappa^{-(k+\alpha_i)} f_{i;\alpha, k}(\KMh{\kappa^{\Lambda_\alpha}{\bf x}} )\\ 	
\notag
	&= \kappa^{-(k+\alpha_i)}\kappa^{k+\alpha_i} f_{i;\alpha, k}(\KMh{{\bf x}})\\
\label{identity-f-horizon}
	&= f_{i;\alpha, k}(\KMh{{\bf x}})
\end{align}
for ${\bf x}=(x_1,\ldots, x_n)\in \mathcal{E}$.
The identity (\ref{balance-para}) is then rewritten as follows:
\begin{equation}
\label{const-horizon-0-Cast}
\KMh{ f_{\alpha,k}({\bf x}_\ast) = G({\bf x}_\ast) \Lambda_\alpha {\bf x}_\ast  = C_\ast\Lambda_\alpha {\bf x}_\ast }.
\end{equation}
Introducing a scaling parameter $r_{{\bf x}_\ast} (> 0)$, we have
\begin{equation*}
\KMh{ f_{\alpha,k}({\bf x}_\ast) = r_{{\bf x}_\ast}^{-(kI + \Lambda_\alpha)} f_{\alpha,k}(\KMh{r_{{\bf x}_\ast}^{\Lambda_\alpha} {\bf x}_\ast} ).}
\end{equation*}
Substituting this identity into (\ref{balance-para}), we have
\begin{equation*}
r_{{\bf x}_\ast}^{-(k+\alpha_i)} f_{i;\alpha,k}(r_{{\bf x}_\ast}^{\alpha_1}x_{\ast,1}, \ldots, r_{{\bf x}_\ast}^{\alpha_n}x_{\ast,n}) \KMd{= \alpha_i x_{\ast,i} C_\ast}.
\end{equation*}
Here we assume that $r_{{\bf x}_\ast}$ satisfies the following equation:
\begin{equation}
\label{balance-C01}
r_{{\bf x}_\ast}^k C_\ast = \frac{1}{k},
\end{equation}
which implies that $r_{{\bf x}_\ast}$ is uniquely determined once $C_\ast$ is given, {\em provided} $C_\ast > 0$.
The positivity of $C_\ast$ is nontrivial in general, while we have the following result.

\begin{lem}
\label{lem-Cast-nonneg}
Let ${\bf x}_\ast \in \mathcal{E}$ be a hyperbolic equilibrium for $g$ such that the local stable manifold $W^s_{\rm loc}({\bf x}_\ast; g)$ satisfies $W^s_{\rm loc}({\bf x}_\ast; g)\cap \mathcal{D}\not = \emptyset$.
Then $C_\ast \equiv G({\bf x}_\ast) \geq 0$.
\end{lem}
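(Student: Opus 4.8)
The plan is to exploit the identity from Lemma~\ref{lem-G}, which states that $G({\bf x})$ equals the logarithmic derivative $\kappa^{-1}\, d\kappa/d\tau$ of the compactification factor $\kappa$ along the flow of $g$. The strategy is to convert the geometric hypothesis---that the stable manifold reaches the interior $\mathcal{D}$---into an asymptotic growth statement for $\kappa$, and then to read off the sign of $C_\ast$ from the averaged growth rate.

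First I would fix a point ${\bf x}_0 \in W^s_{\rm loc}({\bf x}_\ast; g) \cap \mathcal{D}$ and consider the trajectory ${\bf x}(\tau)$ through ${\bf x}_0$. By definition of the stable manifold, ${\bf x}(\tau) \to {\bf x}_\ast$ as $\tau \to +\infty$. Since ${\bf x}_0$ lies in the open set $\mathcal{D} = \{p < 1\}$ and the horizon $\mathcal{E}$ is invariant (Remark~\ref{rem-invariance}), the trajectory cannot cross into $\mathcal{E}$; hence $p({\bf x}(\tau)) < 1$ for all $\tau \geq \tau_0$, so that $\kappa(\tau) = (1 - p({\bf x}(\tau))^{2c})^{-1}$ remains finite and positive along the orbit. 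On the other hand, because ${\bf x}_\ast \in \mathcal{E}$ we have $p({\bf x}(\tau)) \to p({\bf x}_\ast) = 1$, and therefore $\kappa(\tau) \to +\infty$ as $\tau \to +\infty$.

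Next I would integrate the relation of Lemma~\ref{lem-G}. Writing $\frac{d}{d\tau}\log \kappa(\tau) = G({\bf x}(\tau))$ and integrating from $\tau_0$ to $\tau$ gives
\begin{equation*}
\log \kappa(\tau) - \log \kappa(\tau_0) = \int_{\tau_0}^\tau G({\bf x}(s))\, ds.
\end{equation*}
The left-hand side diverges to $+\infty$ by the previous step. Meanwhile, continuity of $G$ together with ${\bf x}(\tau) \to {\bf x}_\ast$ yields $G({\bf x}(\tau)) \to G({\bf x}_\ast) = C_\ast$. The conclusion then follows from a sign comparison: if $C_\ast < 0$ held, then for all sufficiently large $s$ we would have $G({\bf x}(s)) \leq C_\ast/2 < 0$, forcing the integral---and hence $\log \kappa(\tau)$---to tend to $-\infty$, contradicting the divergence to $+\infty$. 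Therefore $C_\ast \geq 0$.

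The argument is essentially complete once the growth of $\kappa$ is pinned down, so the only delicate point I anticipate is justifying rigorously that the trajectory genuinely stays in $\mathcal{D}$ and that $\kappa(\tau) \to +\infty$; both rest on the invariance of the horizon and the blow-up behavior of $p$ at $\mathcal{E}$, which are already available in the excerpt. I note that this reasoning yields only $C_\ast \geq 0$ and not strict positivity, since the limiting value $C_\ast = 0$ remains compatible with $\log\kappa \to +\infty$ at a sub-linear rate---consistent with the statement of the lemma.
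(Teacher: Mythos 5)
Your proposal is correct and follows essentially the same route as the paper's own proof: both integrate the identity of Lemma \ref{lem-G} along a stable-manifold trajectory starting in $\mathcal{D}$, note that $\ln\kappa({\bf x}(\tau))\to+\infty$ since ${\bf x}(\tau)\to{\bf x}_\ast\in\mathcal{E}$, and derive a contradiction with the negativity of $G$ near ${\bf x}_\ast$ (via continuity) under the assumption $C_\ast<0$. Your version merely spells out two points the paper leaves implicit—the invariance of the horizon keeping the orbit in $\mathcal{D}$, and the quantitative bound $G\leq C_\ast/2$ making the integral diverge to $-\infty$.
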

\begin{proof}
Assume that the statement is not true, namely $C_\ast < 0$.
We can choose a solution ${\bf x}(\tau)$ asymptotic to ${\bf x}_{\ast}$ whose initial point ${\bf x}(0)$ satisfies $\kappa({\bf x}(0)) < \infty$ by assumption.
Along such a solution, we integrate $G({\bf x}(\tau))$.
Lemma \ref{lem-G} indicates that
\begin{equation*}
\int_{\tau_0}^\tau G({\bf x}(\eta))d\eta = \ln \kappa({\bf x}(\tau)) - \ln \kappa({\bf x}(\tau_0)).
\end{equation*}
By the continuity of $G$, $G({\bf x}(\tau))$ is always negative along ${\bf x}(\tau)$ in a small neighborhood of ${\bf x}_\ast$ in $W^s_{\rm loc}({\bf x}_\ast; g)$.
On the other hand, ${\bf x}(\tau)\to {\bf x}_\ast\in \mathcal{E}$ holds as $\tau\to +\infty$, implying $\kappa = \kappa({\bf x}(\tau))\to +\infty$. 
The real-valued function $\ln r$ is monotonously increasing in $r$, and hence $\ln \kappa({\bf x}(\tau))$ diverges to $+\infty$ as $\tau\to \infty$, which contradicts the fact that the integral of $G({\bf x}(\tau))$ is negative.
\end{proof}
At this moment, we cannot exclude the possibility that $C_\ast = 0$.
Now we \KMe{{\em assume}} $C_\ast \not = 0$.
Then $C_\ast > 0$ holds and $r_{{\bf x}_\ast}$ in \KMd{(\ref{balance-C01})} is well-defined.
Finally the equation (\ref{balance-para}) is written by 
\begin{align*}
\frac{ \alpha_i}{k} r_{{\bf x}_\ast}^{\alpha_i} \KMc{x_{\ast, i}}  = f_{i;\alpha,k}(r_{{\bf x}_\ast}^{\alpha_1}\KMc{x_{\ast, 1}}, \ldots, r_{{\bf x}_\ast}^{\alpha_n} \KMc{x_{\ast, n}}),
\end{align*}
which is nothing but the balance law (\ref{0-balance}).
As a summary, we have the one-to-one correspondence among roots of the \KMf{balance law} and equilibria on the horizon \KMe{for the desingularized vector field (\ref{desing-para})}.

\begin{thm}[One-to-one correspondence of the balance]
\label{thm-balance-1to1}
Let ${\bf x}_\ast = (x_{\ast,1}, \ldots, x_{\ast,n})^T$ be an equilibrium on the horizon for the desingularized vector field (\ref{desing-para}).
Assume that $C_\ast$ in (\ref{const-horizon}) is positive so that $r_{{\bf x}_\ast} = (kC_\ast)^{-1/k}\KMe{>0}$ is well-defined.
Then \KMi{the vector} ${\bf Y}_0 = (Y_{0,1},\ldots, Y_{0,n})^T$ \KMi{given by}
\begin{equation}
\label{x-to-C}
(Y_{0,1},\ldots, Y_{0,n}) = (r_{{\bf x}_\ast}^{\alpha_1}x_{\ast,1},\ldots, r_{{\bf x}_\ast}^{\alpha_n}x_{\ast,n}) \equiv r_{{\bf x}_\ast}^{\Lambda_\alpha}{\bf x}_\ast 
\end{equation}
\KMi{is a root of the balance law (\ref{0-balance}).}
Conversely, let ${\bf Y}_0\not = 0$ be a root of the balance law (\ref{0-balance}). 
Then \KMi{the vector} ${\bf x}_\ast$ \KMi{given by}
\begin{equation}
\label{C-to-x}
(x_{\ast,1},\ldots, x_{\ast,n}) = (r_{{\bf Y}_0}^{-\alpha_1}Y_{0,1},\ldots, r_{{\bf Y}_0}^{-\alpha_n}Y_{0,n})\equiv r_{{\bf Y}_0}^{-\Lambda_\alpha}{\bf Y}_0
\end{equation}
\KMi{is an equilibrium on the horizon for (\ref{desing-para}),}
where $r_{{\bf Y}_0} = p({\bf Y}_0) > 0$.
\end{thm}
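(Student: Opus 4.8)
The plan is to handle the two directions of the correspondence separately and then verify that the assignments (\ref{x-to-C}) and (\ref{C-to-x}) are mutually inverse. The forward direction, sending an equilibrium ${\bf x}_\ast$ on the horizon to a root ${\bf Y}_0$ of the balance law, has in effect already been carried out in the computation preceding the statement: starting from the equilibrium equation (\ref{balance-para}), using $p({\bf x}_\ast)=1$ to kill the prefactor, invoking (\ref{identity-f-horizon}) so that only the quasi-homogeneous part $f_{\alpha,k}$ survives on $\mathcal{E}$, and rescaling by $r_{{\bf x}_\ast}=(kC_\ast)^{-1/k}$ (well defined once $C_\ast>0$) turns (\ref{const-horizon-0-Cast}) into $f_{i;\alpha,k}({\bf Y}_0)=\frac{\alpha_i}{k}Y_{0,i}$, which is precisely (\ref{0-balance}). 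So the first half amounts to assembling those steps and observing that the normalization (\ref{balance-C01}) is exactly what makes the scaling exponents cancel.

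For the converse I would first record the scaling law of the functional $p$. From (\ref{func-p}) together with the least-common-multiple relation (\ref{LCM}) one has $2\alpha_i\beta_i=2c$ for every $i$, whence $p(s^{\Lambda_\alpha}{\bf y})=s\,p({\bf y})$ for all $s>0$; that is, $p$ is quasi-homogeneous of type $\alpha$ and order $1$. Applying this with $s=r_{{\bf Y}_0}^{-1}=p({\bf Y}_0)^{-1}$ to ${\bf x}_\ast=r_{{\bf Y}_0}^{-\Lambda_\alpha}{\bf Y}_0$ gives $p({\bf x}_\ast)=r_{{\bf Y}_0}^{-1}p({\bf Y}_0)=1$, so ${\bf x}_\ast\in\mathcal{E}$; here $r_{{\bf Y}_0}=p({\bf Y}_0)>0$ is well defined because ${\bf Y}_0\neq 0$.

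It then remains to verify that ${\bf x}_\ast$ solves the equilibrium equation $g({\bf x}_\ast)=0$ in the form (\ref{balance-para}). Since $p({\bf x}_\ast)=1$ the prefactor equals $1$, and by (\ref{identity-f-horizon}) we have $\tilde f({\bf x}_\ast)=f_{\alpha,k}({\bf x}_\ast)$, so it suffices to show $f_{\alpha,k}({\bf x}_\ast)=G({\bf x}_\ast)\Lambda_\alpha{\bf x}_\ast$. Rescaling the balance law $f_{\alpha,k}({\bf Y}_0)=\frac1k\Lambda_\alpha{\bf Y}_0$ by the quasi-homogeneity of $f_{\alpha,k}$ (Definition~\ref{dfn-AQH}), using ${\bf Y}_0=r_{{\bf Y}_0}^{\Lambda_\alpha}{\bf x}_\ast$, yields $f_{i;\alpha,k}({\bf x}_\ast)=\frac{\alpha_i x_{\ast,i}}{k\,r_{{\bf Y}_0}^{k}}$, i.e. $f_{\alpha,k}({\bf x}_\ast)=\frac{1}{k r_{{\bf Y}_0}^{k}}\Lambda_\alpha{\bf x}_\ast$. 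The crux is then to close the loop by computing $G({\bf x}_\ast)$ directly from (\ref{Gx}): substituting these values of $\tilde f_j({\bf x}_\ast)$ gives $G({\bf x}_\ast)=\frac{1}{k r_{{\bf Y}_0}^{k}}\sum_{j=1}^n x_{\ast,j}^{2\beta_j}$, and the remaining sum equals $p({\bf x}_\ast)^{2c}=1$. Hence $G({\bf x}_\ast)=\frac{1}{k r_{{\bf Y}_0}^{k}}$ and the equilibrium equation holds. This identity $\sum_j x_{\ast,j}^{2\beta_j}=1$, valid exactly on the horizon, is the step I expect to carry the genuine content of the converse; everything else is bookkeeping with the scaling exponents.

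Finally I would confirm that (\ref{x-to-C}) and (\ref{C-to-x}) are inverse bijections. If ${\bf Y}_0=r_{{\bf x}_\ast}^{\Lambda_\alpha}{\bf x}_\ast$, the order-$1$ scaling of $p$ gives $p({\bf Y}_0)=r_{{\bf x}_\ast}$, so $r_{{\bf Y}_0}=r_{{\bf x}_\ast}$ and the backward map returns ${\bf x}_\ast$; conversely, the computation $G({\bf x}_\ast)=(k r_{{\bf Y}_0}^{k})^{-1}$ above forces $r_{{\bf x}_\ast}=(kC_\ast)^{-1/k}=r_{{\bf Y}_0}$, so the forward map returns ${\bf Y}_0$. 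Thus the two constructions are mutually inverse and the correspondence is one-to-one.
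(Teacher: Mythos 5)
Your proposal is correct and follows essentially the same route as the paper's proof: the forward direction is exactly the computation carried out before the statement, and your converse rests on the same three ingredients the paper uses — the scaling law $p(s^{\Lambda_\alpha}{\bf y})=s\,p({\bf y})$ (so that $p({\bf x}_\ast)=1$), the identity (\ref{identity-f-horizon}), and the horizon identity $\sum_j x_{\ast,j}^{2\beta_j}=p({\bf x}_\ast)^{2c}=1$ — the only difference being that the paper runs the argument as a single chain of equalities starting from the right-hand side of (\ref{balance-para}), whereas you compute $f_{\alpha,k}({\bf x}_\ast)=\frac{1}{k r_{{\bf Y}_0}^{k}}\Lambda_\alpha{\bf x}_\ast$ and $G({\bf x}_\ast)=\frac{1}{k r_{{\bf Y}_0}^{k}}$ separately and compare. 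Your closing verification that (\ref{x-to-C}) and (\ref{C-to-x}) are mutually inverse is a correct small addition that the paper leaves implicit.
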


\begin{proof}
\KMf{We have already seen the proof of the first statement, and hence we shall prove the second statement here}.
First let
\begin{equation*}
r_{{\bf Y}_0} \equiv p({\bf Y}_0) > 0,\quad \bar Y_{0,i} := \frac{Y_{0,i}}{r_{{\bf Y}_0}^{\alpha_i}}.
\end{equation*}
By definition $p(\bar {\bf Y}_0) = 1$, where $\bar {\bf Y}_0 = (\bar Y_{0,1},\ldots, \bar Y_{0,n})^T$.
Substituting $\bar {\bf Y}_0$ into the right-hand side of (\ref{balance-para}), we have
\begin{align*}
\alpha_i \bar Y_{0,i} \sum_{j=1}^n \frac{\bar Y_{0,j}^{2\beta_j-1}}{\alpha_j}\tilde f_j(\bar {\bf Y}_0) = \alpha_i \bar Y_{0,i} \sum_{j=1}^n \frac{\bar Y_{0,j}^{2\beta_j-1}}{\alpha_j} \tilde f_{j; \alpha, k}(\bar {\bf Y}_0),
\end{align*}
where we have used the identity $p(\bar {\bf Y}_0) = 1$ and (\ref{identity-f-horizon}).
From quasi-homogeneity of $f_{\alpha, k}$ and the balance law (\ref{0-balance}), we further have
\begin{align*}
 \alpha_i \bar Y_{0,i} \sum_{j=1}^n \frac{\bar Y_{0,j}^{2\beta_j-1}}{\alpha_j} \tilde f_{j; \alpha, k}(\bar {\bf Y}_0) 
&= \alpha_i \bar Y_{0,i} \sum_{j=1}^n \frac{\bar Y_{0,j}^{2\beta_j-1}}{\alpha_j} r_{{\bf Y}_0}^{-(k+\alpha_j)}f_{j; \alpha, k}({\bf Y}_0) \\
&= \alpha_i \bar Y_{0,i} \sum_{j=1}^n \frac{\bar Y_{0,j}^{2\beta_j-1}}{\alpha_j} r_{{\bf Y}_0}^{-(k+\alpha_j)} \frac{\alpha_j}{k}Y_{0,j} 
 = \alpha_i \bar Y_{0,i} \frac{r_{{\bf Y}_0}^{-k}}{k}\sum_{j=1}^n \bar Y_{0,j}^{2\beta_j}\\
 &= \alpha_i \bar Y_{0,i} \frac{r_{{\bf Y}_0}^{-k}}{k} = r_{{\bf Y}_0}^{-(k+\alpha_i)} \frac{\alpha_i}{k} Y_{0,i}\\
&= r_{{\bf Y}_0}^{-(k+\alpha_i)} f_{i; \alpha, k}({\bf Y}_0) = f_{i; \alpha, k}(\bar {\bf Y}_0) = \tilde f_{i; \alpha, k}(\bar {\bf Y}_0),
\end{align*}
implying that $\bar {\bf Y}_0$ is a root of (\ref{balance-para}).
\end{proof}

\subsection{A special eigenstructure characterizing blow-ups}
The balance law determines the coefficients of type-I blow-ups, which \KMi{turn out to} correspond to equilibria on the horizon for the desingularized vector field.
This correspondence provides a relationship among two different vector fields involving blow-ups.
We further investigate a common feature characterizing blow-up behavior extracted by blow-up power-determining matrices, as well as desingularized vector fields.


\begin{thm}[Eigenvalue $1$. cf. \cite{AM1989, C2015}]
\label{thm-ev1}
\KMe{Consider \KMe{an} asymptotically quasi-homogeneous vector field $f$ of type $\alpha = (\alpha_1,\ldots, \alpha_n)$ and order $k+1$}.
Suppose that a nontrivial root ${\bf Y}_0$ of the balance law (\ref{0-balance}) is given.
Then the corresponding blow-up power-determining matrix $A$ has an eigenvalue $1$ with the associating eigenvector
\begin{equation}
\label{vector-ev1}
{\bf v}_{0,\alpha} = \KMf{\Lambda_\alpha {\bf Y}_0}.
\end{equation} 
\end{thm}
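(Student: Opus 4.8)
The plan is to verify the eigenvalue relation $A{\bf v}_{0,\alpha} = {\bf v}_{0,\alpha}$ by direct computation, treating the statement as a short exercise driven by the Euler-type identity of Lemma \ref{lem-identity-QHvf} for the quasi-homogeneous component $f_{\alpha,k}$ together with the balance law (\ref{0-balance}). There is no deep geometry to uncover here: the entire content is that these two identities interlock so that the quadratic $\Lambda_\alpha^2$ contributions cancel, leaving eigenvalue exactly $1$.

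Concretely, I would first apply the rephrased quasi-homogeneity identity (\ref{temporay-label4}) to $f_{\alpha,k}$ (which is quasi-homogeneous of type $\alpha$ and order $k+1$) evaluated at the root ${\bf Y}_0$, obtaining
\[
Df_{\alpha,k}({\bf Y}_0)\,\Lambda_\alpha {\bf Y}_0 = (kI + \Lambda_\alpha)\, f_{\alpha,k}({\bf Y}_0).
\]
Since ${\bf Y}_0$ solves the balance law, $f_{\alpha,k}({\bf Y}_0) = \frac{1}{k}\Lambda_\alpha {\bf Y}_0$; substituting this, and using that diagonal matrices commute, turns the right-hand side into $\left(\Lambda_\alpha + \frac{1}{k}\Lambda_\alpha^2\right){\bf Y}_0$. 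Inserting this into the definition (\ref{blow-up-power-determining-matrix}) of $A$ then gives
\[
A\Lambda_\alpha {\bf Y}_0 = -\frac{1}{k}\Lambda_\alpha^2 {\bf Y}_0 + Df_{\alpha,k}({\bf Y}_0)\Lambda_\alpha {\bf Y}_0 = -\frac{1}{k}\Lambda_\alpha^2 {\bf Y}_0 + \Lambda_\alpha {\bf Y}_0 + \frac{1}{k}\Lambda_\alpha^2 {\bf Y}_0 = \Lambda_\alpha {\bf Y}_0,
\]
the two quadratic terms cancelling, which is precisely $1\cdot{\bf v}_{0,\alpha}$.

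The only point requiring a little care — and the one I would flag as the genuine obstacle rather than the algebra — is confirming that ${\bf v}_{0,\alpha} = \Lambda_\alpha {\bf Y}_0$ is nonzero, so that it truly qualifies as an eigenvector. When all $\alpha_i$ are positive this is immediate from the nontriviality of ${\bf Y}_0$; in the generalized setting of Definition \ref{dfn-AQH}, where some $\alpha_i$ may vanish, one must rule out the degenerate case in which every nonvanishing coordinate of ${\bf Y}_0$ sits in a direction with $\alpha_i = 0$. I expect this to follow from the balance law itself, since $\Lambda_\alpha {\bf Y}_0 = 0$ would force $f_{\alpha,k}({\bf Y}_0) = 0$ and thereby collapse the purported blow-up coefficient, but it is worth stating explicitly rather than leaving implicit.
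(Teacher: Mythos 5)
Your computation is exactly the paper's proof: the same application of the Euler-type identity (\ref{temporay-label4}) to $f_{\alpha,k}$ at ${\bf Y}_0$, the balance law (\ref{0-balance}), and the definition (\ref{blow-up-power-determining-matrix}) of $A$, with the two $\frac{1}{k}\Lambda_\alpha^2{\bf Y}_0$ terms cancelling to give $A\Lambda_\alpha{\bf Y}_0 = \Lambda_\alpha{\bf Y}_0$. The one point of divergence is the nonvanishing of $\Lambda_\alpha{\bf Y}_0$: the paper defers this to the later correspondence result (Theorem \ref{thm-blow-up-estr}, using the structure of the horizon $\mathcal{E}$ and Remark \ref{rem-zero-comp-compactification}), and your balance-law-only sketch is not by itself conclusive, since $\Lambda_\alpha{\bf Y}_0 = 0$ with ${\bf Y}_0\neq 0$ merely forces $f_{\alpha,k}({\bf Y}_0)=0$ and does not algebraically exclude a nontrivial root supported entirely on coordinates with $\alpha_i = 0$; ruling that out genuinely requires the dynamical correspondence, so you were right to flag it as the delicate step.
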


Note that the matrix $A$ only involves the quasi-homogeneous part $f_{\alpha, k}$ of $f$.

\begin{proof}
Consider \eqref{temporay-label4} at $\mathbf{y} = \mathbf{Y}_0$
with the help of (\ref{0-balance}):
\begin{equation}
(D f_{\alpha,k})(\mathbf{Y}_0)  \KMg{ \Lambda_\alpha } \mathbf{Y}_0 
= \left(  \KMg{ k I+ \Lambda_\alpha } \right) f_{\alpha,k}(\mathbf{Y}_0)
= \left( \KMg{  I + \frac{1}{k}\Lambda_\alpha } \right) \KMg{ \Lambda_\alpha } \mathbf{Y}_0.
\end{equation}
Then, using the definition of $A$, we have
\begin{eqnarray*}
A \KMg{ \Lambda_\alpha } \mathbf{Y}_0
= \left( -\KMf{ \frac{1}{k}\Lambda_\alpha } + Df_{\alpha,k}(\mathbf{Y}_0) \right) \KMg{ \Lambda_\alpha } \mathbf{Y}_0
= -\frac{1}{k}\Lambda_\alpha^2 \mathbf{Y}_0 + \left(I+\KMf{ \frac{1}{k}\Lambda_\alpha } \right)\KMg{  \Lambda_\alpha } \mathbf{Y}_0
=\KMg{ \Lambda_\alpha } \mathbf{Y}_0,
\end{eqnarray*}
which shows the desired statement\footnote{
\KMb{It follows from Theorem \ref{thm-blow-up-estr} below that $\Lambda_\alpha \mathbf{Y}_0$ is not a zero-vector}.
}.
\end{proof}
As a corollary, we have the following observations from the balance law, which extracts a common feature of blow-up solutions.

\begin{cor}
Under the same assumptions in Theorem \ref{thm-ev1}, 
the corresponding blow-up power-determining matrix $A$ has an eigenvalue $1$ with the associating eigenvector $f_{\alpha, k}({\bf Y}_0)$.
\end{cor}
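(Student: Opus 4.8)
The plan is to derive this immediately from Theorem \ref{thm-ev1} together with the balance law (\ref{0-balance}), with essentially no new computation. The observation that drives the proof is that the balance law expresses $f_{\alpha,k}({\bf Y}_0)$ as a scalar multiple of the eigenvector ${\bf v}_{0,\alpha} = \Lambda_\alpha {\bf Y}_0$ already identified in Theorem \ref{thm-ev1}, so the corollary reduces to the elementary fact that a nonzero scalar multiple of an eigenvector is again an eigenvector for the same eigenvalue.

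Concretely, the first step is to rearrange the balance law (\ref{0-balance}) to read
\begin{equation*}
f_{\alpha,k}({\bf Y}_0) = \frac{1}{k}\Lambda_\alpha {\bf Y}_0 = \frac{1}{k}{\bf v}_{0,\alpha}.
\end{equation*}
The second step is to invoke Theorem \ref{thm-ev1}, which asserts $A{\bf v}_{0,\alpha} = {\bf v}_{0,\alpha}$. Applying $A$ to both sides of the displayed identity and using linearity then gives
\begin{equation*}
A f_{\alpha,k}({\bf Y}_0) = \frac{1}{k} A {\bf v}_{0,\alpha} = \frac{1}{k}{\bf v}_{0,\alpha} = f_{\alpha,k}({\bf Y}_0),
\end{equation*}
so $f_{\alpha,k}({\bf Y}_0)$ is an eigenvector of $A$ associated with eigenvalue $1$, as claimed.

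The only point requiring a word of care is that $f_{\alpha,k}({\bf Y}_0)$ must be genuinely nonzero for it to qualify as an eigenvector; this follows since it equals $\tfrac{1}{k}\Lambda_\alpha {\bf Y}_0$ and $\Lambda_\alpha {\bf Y}_0$ is nonzero (as already noted in the footnote to Theorem \ref{thm-ev1}, via Theorem \ref{thm-blow-up-estr}). I do not anticipate any substantive obstacle here: the statement is a direct corollary, and the entire content is the recognition that the balance law makes $f_{\alpha,k}({\bf Y}_0)$ and ${\bf v}_{0,\alpha}$ parallel. The corollary is perhaps best read as recording that the natural eigenvector realizing eigenvalue $1$ admits the alternative, coordinate-free description $f_{\alpha,k}({\bf Y}_0)$, which is sometimes more convenient than $\Lambda_\alpha {\bf Y}_0$ in applications.
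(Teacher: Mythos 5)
Your proof is correct and is precisely the argument the paper intends: the corollary is stated immediately after Theorem \ref{thm-ev1} as a consequence of the balance law (\ref{0-balance}), which shows $f_{\alpha,k}({\bf Y}_0) = \tfrac{1}{k}\Lambda_\alpha {\bf Y}_0$ is a nonzero scalar multiple of the eigenvector ${\bf v}_{0,\alpha}$. Your additional care about nonvanishing of $f_{\alpha,k}({\bf Y}_0)$, via the footnote's appeal to Theorem \ref{thm-blow-up-estr}, matches the paper's own justification.
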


Combining with Theorem \ref{thm-balance-1to1}, the eigenvector is also characterized as follows.
\begin{cor}
Suppose that all assumptions in Theorem \ref{thm-ev1} are satisfied.
Then the blow-up power-determining matrix $A$ associated with the blow-up solution given by the balance law (\ref{0-balance}) has an eigenvalue $1$ with the associating eigenvector $\KMc{r_{{\bf x}_\ast}^{\Lambda_\alpha}} f_{\alpha, k}({\bf x}_\ast)$, where ${\bf x}_\ast$ is the equilibrium on the horizon for the desingularized vector field, under the quasi-parabolic compactification of type $\alpha$, given by the formula (\ref{C-to-x}).
\end{cor}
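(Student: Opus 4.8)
The plan is to reduce the statement to the preceding Corollary, which already exhibits $f_{\alpha,k}({\bf Y}_0)$ as an eigenvector of $A$ for the eigenvalue $1$, and then merely rewrite this eigenvector in the ${\bf x}_\ast$-coordinates using the balance correspondence of Theorem \ref{thm-balance-1to1}. First I would recall that the balance law (\ref{0-balance}) gives $f_{\alpha,k}({\bf Y}_0) = \frac{1}{k}\Lambda_\alpha {\bf Y}_0$, so that $f_{\alpha,k}({\bf Y}_0)$ is a nonzero scalar multiple of the eigenvector ${\bf v}_{0,\alpha} = \Lambda_\alpha {\bf Y}_0$ produced in Theorem \ref{thm-ev1}; thus $f_{\alpha,k}({\bf Y}_0)$ lies in the eigenspace of $A$ for the eigenvalue $1$.

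The core step is to substitute the relation ${\bf Y}_0 = r_{{\bf x}_\ast}^{\Lambda_\alpha}{\bf x}_\ast$ from (\ref{x-to-C}) and to invoke the quasi-homogeneity of $f_{\alpha,k}$. Since each component $f_{i;\alpha,k}$ is quasi-homogeneous of type $\alpha$ and order $k + \alpha_i$, taking the scaling parameter $s = r_{{\bf x}_\ast}$ yields componentwise
\[
f_{i;\alpha,k}({\bf Y}_0) = f_{i;\alpha,k}\big(r_{{\bf x}_\ast}^{\Lambda_\alpha}{\bf x}_\ast\big) = r_{{\bf x}_\ast}^{\,k+\alpha_i}\, f_{i;\alpha,k}({\bf x}_\ast),
\]
which in matrix notation reads $f_{\alpha,k}({\bf Y}_0) = r_{{\bf x}_\ast}^{\,k}\, r_{{\bf x}_\ast}^{\Lambda_\alpha} f_{\alpha,k}({\bf x}_\ast)$, where $r_{{\bf x}_\ast}^{\Lambda_\alpha} = \diag(r_{{\bf x}_\ast}^{\alpha_1},\ldots, r_{{\bf x}_\ast}^{\alpha_n})$.

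Finally, because $r_{{\bf x}_\ast}^{\,k} > 0$ is a nonzero scalar, the vectors $f_{\alpha,k}({\bf Y}_0)$ and $r_{{\bf x}_\ast}^{\Lambda_\alpha} f_{\alpha,k}({\bf x}_\ast)$ are parallel, so the latter is again an eigenvector of $A$ for the eigenvalue $1$, as claimed. I do not expect any genuine obstacle here: the argument is a direct combination of the preceding Corollary, the explicit map (\ref{x-to-C}), and the defining scaling identity of $f_{\alpha,k}$. The only points requiring care are the bookkeeping of the scaling exponent — namely the extra factor $r_{{\bf x}_\ast}^{\,k}$ arising from the order-$(k+\alpha_i)$ homogeneity of the $i$-th component — and the observation that the eigenvector is nonzero, which follows from ${\bf Y}_0 \neq 0$ and is reconfirmed by Theorem \ref{thm-blow-up-estr}.
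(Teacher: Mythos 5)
Your proposal is correct and takes essentially the route the paper intends: the corollary is stated there without separate proof, as the combination of Theorem \ref{thm-ev1} (equivalently the preceding corollary, via the balance law $f_{\alpha,k}({\bf Y}_0) = \frac{1}{k}\Lambda_\alpha {\bf Y}_0$) with the correspondence ${\bf Y}_0 = r_{{\bf x}_\ast}^{\Lambda_\alpha}{\bf x}_\ast$ from Theorem \ref{thm-balance-1to1} and the quasi-homogeneous scaling $f_{\alpha,k}(r_{{\bf x}_\ast}^{\Lambda_\alpha}{\bf x}_\ast) = r_{{\bf x}_\ast}^{kI+\Lambda_\alpha} f_{\alpha,k}({\bf x}_\ast)$, which is exactly your computation including the extra scalar factor $r_{{\bf x}_\ast}^{k}$. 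Your treatment of nonvanishing (deferring $\Lambda_\alpha {\bf Y}_0 \neq 0$ to Theorem \ref{thm-blow-up-estr}) also mirrors the paper's own footnote, so there is no gap.
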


The above arguments indicate one specific eigenstructure of the desingularized vector field at equilibria on the horizon {\em under a technical assumption}.
\KMe{To see this, we make the following} assumption to $f$, which is essential to the following arguments coming from the technical restriction due to the form of parabolic compactifications, while it can be relaxed for general systems.
\begin{ass}
\label{ass-f}
For each $i$,
\begin{equation*}
\tilde f_{i;{\rm res}}({\bf x}) = O\left(\kappa({\bf x})^{-(1+\epsilon)} \right),\quad \frac{\partial \tilde f_{i;{\rm res}}}{\partial x_l}({\bf x}) = o\left(\kappa({\bf x})^{-(1+\epsilon)}\right),\quad l=1,\ldots, n
\end{equation*}
hold for some $\epsilon > 0$ as ${\bf x}$ approaches to $\mathcal{E}$.
\KMf{Moreover, for any equilibrium ${\bf x}_\ast$ for $g$ \KMi{under consideration}, $C_\ast > 0$ holds, where $C_\ast = C_\ast({\bf x}_\ast)$ is given in (\ref{const-horizon}).}
\end{ass}

The direct consequence of the assumption is the following, which is used to the correspondence of eigenstructures among different matrices.
\begin{lem}
\label{lem-ass-f}
Let ${\bf x}_\ast\in \mathcal{E}$ be an equilibrium for $g$.
Under Assumption \ref{ass-f}, we have $D\tilde f({\bf x}_\ast) = D\tilde f_{\alpha, k}({\bf x}_\ast)$, where the derivative $D$ is with respect to ${\bf x}$.
\end{lem}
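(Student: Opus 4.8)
We must show that $D\tilde f({\bf x}_\ast) = D\tilde f_{\alpha, k}({\bf x}_\ast)$ at an equilibrium ${\bf x}_\ast \in \mathcal{E}$, where $\tilde f$ is the compactified nonlinearity defined in (\ref{f-tilde}) and $\tilde f_{\alpha,k}$ is its quasi-homogeneous analog. Since $f = f_{\alpha,k} + f_{\rm res}$, by linearity of $D$ the claim is equivalent to $D\tilde f_{\rm res}({\bf x}_\ast) = 0$. So the whole statement reduces to showing that the residual term, after compactification, has vanishing Jacobian on the horizon.

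**Plan.** The plan is to compute $D\tilde f_{i;{\rm res}}({\bf x})$ explicitly and then take the limit ${\bf x}\to\mathcal{E}$ using Assumption \ref{ass-f}. Recall from (\ref{f-tilde}) that
\begin{equation*}
\tilde f_{i;{\rm res}}({\bf x}) = \kappa^{-(k+\alpha_i)} f_{i;{\rm res}}(\kappa^{\Lambda_\alpha}{\bf x}),
\end{equation*}
so differentiating with respect to $x_l$ by the product and chain rules produces two groups of terms: one where the $\kappa^{-(k+\alpha_i)}$ prefactor is differentiated (using $\partial\kappa/\partial x_l$, which is $(\nabla_{\bf x}\kappa)_l$ from the explicit formula given after (\ref{desing-para})), and one where $f_{i;{\rm res}}$ itself is differentiated through its arguments $\kappa^{\alpha_j}x_j$. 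First I would write down both groups, substituting the explicit expression for $(\nabla_{\bf x}\kappa)_l$ and the scaling relation $\partial_{x_l}(\kappa^{\alpha_j}x_j) = \alpha_j\kappa^{\alpha_j-1}(\nabla_{\bf x}\kappa)_l x_j + \delta_{jl}\kappa^{\alpha_l}$.

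**Key estimates.** The crux is to show each term is $o(1)$ as ${\bf x}\to\mathcal{E}$, i.e. as $\kappa\to\infty$. Assumption \ref{ass-f} provides exactly the two bounds needed: $\tilde f_{i;{\rm res}} = O(\kappa^{-(1+\epsilon)})$ and $\partial_{x_l}\tilde f_{i;{\rm res}} = o(\kappa^{-(1+\epsilon)})$. Note, however, that the statement of the lemma concerns the derivative $D\tilde f_{\rm res}$ \emph{directly}, so the cleanest route is to observe that the hypothesis $\partial\tilde f_{i;{\rm res}}/\partial x_l = o(\kappa^{-(1+\epsilon)})$ already asserts that this derivative decays to $0$ faster than $\kappa^{-(1+\epsilon)}$, and since $\kappa({\bf x}_\ast)=\infty$ on $\mathcal{E}$ (equivalently $p({\bf x}_\ast)=1$ so that $\kappa=(1-p^{2c})^{-1}\to\infty$), we conclude $\partial\tilde f_{i;{\rm res}}/\partial x_l\to 0$. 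Therefore $D\tilde f_{\rm res}({\bf x}_\ast)=0$ and the lemma follows. The role of the two-part Assumption \ref{ass-f} is that the bound on $\tilde f_{i;{\rm res}}$ itself controls the contribution of the differentiated prefactor $\kappa^{-(k+\alpha_i)}$, while the bound on its derivatives controls the remaining chain-rule terms; both must conspire to give overall decay.

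**Main obstacle.** The subtle point is not the limit itself but \emph{justifying that the hypothesis on $\tilde f_{i;{\rm res}}$ is stated in the right coordinates}. Assumption \ref{ass-f} is phrased in terms of the compactified quantity $\tilde f_{i;{\rm res}}$ and its ${\bf x}$-derivatives with $\kappa=\kappa({\bf x})$, which is precisely what appears in the Jacobian $D\tilde f$ entering the desingularized field. Thus the real work is bookkeeping: confirming that the derivative expression one obtains from the product rule is term-by-term dominated by the quantities Assumption \ref{ass-f} bounds, paying attention to whether the factors $\kappa^{\alpha_j-1}(\nabla_{\bf x}\kappa)_l$ could amplify the decay rate. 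Because $(\nabla_{\bf x}\kappa)_j\sim \kappa^{1-\alpha_j}x_j^{2\beta_j-1}$ grows like a positive power of $\kappa$, I expect the main care to lie in verifying that the assumed decay exponent $1+\epsilon$ is strong enough to beat these growing factors; this is where the explicit growth order of $\kappa$ in the compactification and the choice of $\epsilon>0$ must be reconciled. Once that is checked, every contribution vanishes on $\mathcal{E}$ and the identity $D\tilde f({\bf x}_\ast)=D\tilde f_{\alpha,k}({\bf x}_\ast)$ is immediate.
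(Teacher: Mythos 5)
Your proof is correct, and its decisive step (the ``cleanest route'' of your third paragraph) reaches the paper's conclusion by a genuine shortcut. The paper proves the lemma by factoring $\tilde f_{i;{\rm res}}({\bf x}) = \kappa^{-(1+\epsilon)}\tilde f^{(1)}_{i;{\rm res}}({\bf x})$ with $\tilde f^{(1)}_{i;{\rm res}} = O(1)$, applying the product rule, and checking that both resulting ``gap'' terms vanish on $\mathcal{E}$, using that $\kappa^{-1} = 1-p({\bf x})^{2c}$ and its gradient stay bounded while $\kappa^{-\epsilon}\to 0$ there. You instead observe that the second condition of Assumption \ref{ass-f} already bounds exactly the object of interest: $\partial\tilde f_{i;{\rm res}}/\partial x_l = o(\kappa^{-(1+\epsilon)}) = o(1)$ as ${\bf x}\to\mathcal{E}$ because $\kappa\to+\infty$ at the horizon, so $D\tilde f_{\rm res}$ vanishes at ${\bf x}_\ast$ and $D\tilde f({\bf x}_\ast)=D\tilde f_{\alpha,k}({\bf x}_\ast)$ follows by linearity of $D$. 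Your route buys brevity and uses only the derivative half of the assumption; the paper's factorization, by contrast, in effect shows the lemma survives under weaker hypotheses (boundedness of $\kappa^{1+\epsilon}\tilde f_{i;{\rm res}}$ together with an $o(\kappa^{1+\epsilon})$ bound on its derivative, which only yields $\partial\tilde f_{i;{\rm res}}/\partial x_l = o(1)$ rather than the full $o(\kappa^{-(1+\epsilon)})$ decay). Two minor remarks: both arguments implicitly use continuity of the derivative up to the horizon to turn the limit statement into the equality at ${\bf x}_\ast$, so this is not a gap relative to the paper; and your second and fourth paragraphs (the chain-rule expansion of $\kappa^{-(k+\alpha_i)}f_{i;{\rm res}}(\kappa^{\Lambda_\alpha}{\bf x})$ and the worry about amplification by factors such as $\kappa^{\alpha_j-1}(\nabla_{\bf x}\kappa)_l$) are superfluous once the shortcut is taken, precisely because, as you yourself note, Assumption \ref{ass-f} is phrased for the compactified residual in the ${\bf x}$-variables, which is what enters $D\tilde f$.
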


\begin{proof}
Now $\tilde f_{i;{\rm res}}$ is expressed as
\begin{align*}
\tilde f_{i;{\rm res}}(\KMh{{\bf x}}) &\equiv \kappa^{-(k+\alpha_i)} f_{i,{\rm res}}(\KMh{\kappa^{\Lambda_\alpha}{\bf x}} )\quad \text{(by (\ref{f-tilde}))} \\
	&\equiv \kappa^{-(1+\epsilon)} \tilde f_{i;{\rm res}}^{(1)} (\KMh{{\bf x}})
\end{align*}
with
\begin{equation*}
\tilde f_{i;{\rm res}}^{(1)}({\bf x}) = O(1),\quad \frac{\partial \tilde f_{i;{\rm res}}^{(1)}}{\partial x_l}({\bf x}) = o(\kappa({\bf x})^{1+\epsilon}), \quad l=1,\ldots, n
\end{equation*}
as ${\bf x}$ approaches to $\mathcal{E}$\KMc{.}
The partial derivative of the component $\tilde f_i$ with respect to $x_l$ at ${\bf x}_\ast$ is
\begin{equation*}
\frac{\partial \tilde f_i}{\partial x_l}({\bf x}_\ast) = \frac{\partial \tilde f_{i;\alpha, k}}{\partial x_l}({\bf x}_\ast) + 
\KMg{
(1+\epsilon)\kappa^{-\epsilon}\frac{\partial \kappa^{-1}}{\partial x_l}\tilde f_{i;{\rm res}}^{(1)}(\KMh{{\bf x}_\ast})  + \kappa^{-(1+\epsilon)} \frac{\partial \tilde f_{i;{\rm res}}^{(1)}}{\partial x_l}({\bf x}_\ast ).
}
\end{equation*}
Using the fact that $\kappa^{-1} = 0$ on the horizon, 
our present assumption implies that the \lq\lq gap" terms
\begin{equation*}
(1+\epsilon)\kappa^{-\epsilon}\frac{\partial \kappa^{-1}}{\partial x_l}\tilde f_{i;{\rm res}}^{(1)}(\KMh{{\bf x}_\ast})  + \kappa^{-(1+\epsilon)} \tilde f_{i;{\rm res}}^{(1)}({\bf x}_\ast )
\end{equation*}
are identically $0$ on the horizon and hence the Jacobian matrix $D\tilde f({\bf x}_\ast)$ with respect to ${\bf x}$ coincides with $D\tilde f_{\alpha, k}({\bf x}_\ast)$.
\end{proof}

\begin{thm}
\label{thm-ev-special}
\KMe{Suppose that Assumption \ref{ass-f} holds}.
Also suppose that ${\bf x}_\ast$ is an equilibrium on the horizon for the associated desingularized vector field (\ref{desing-para}).
Then the Jacobian matrix $Dg({\bf x}_\ast)$ always \KMc{possesses} the eigenpair $\{-C_\ast, {\bf v}_{\ast,\alpha}\}$, where
\begin{equation*}
{\bf v}_{\ast,\alpha} = \KMf{ \Lambda_\alpha {\bf x}_\ast }.
\end{equation*}
\end{thm}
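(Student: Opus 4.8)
The plan is to verify the eigenrelation $Dg({\bf x}_\ast){\bf v}_{\ast,\alpha} = -C_\ast{\bf v}_{\ast,\alpha}$ by exploiting the fact that ${\bf v}_{\ast,\alpha} = \Lambda_\alpha{\bf x}_\ast$ is exactly the velocity at $s=1$ of the quasi-homogeneous scaling curve $\sigma(s) := s^{\Lambda_\alpha}{\bf x}_\ast$. Since $g$ is $C^1$ up to the horizon (Remark \ref{rem-choice-cpt}) and $\sigma(1) = {\bf x}_\ast$, $\sigma'(1) = \Lambda_\alpha{\bf x}_\ast$, the chain rule gives $Dg({\bf x}_\ast)\Lambda_\alpha{\bf x}_\ast = \frac{d}{ds}g(\sigma(s))\big|_{s=1}$, which I would compute as a one-sided limit from inside $\mathcal{D}$ (where every ingredient is well defined), using the scaling identity $p(\sigma(s)) = s$.

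First I would split $g = g^{(0)} + g^{(1)}$ along the decomposition $\tilde f = \tilde f_{\alpha,k} + \tilde f_{\rm res} = f_{\alpha,k} + \tilde f_{\rm res}$, the identity $\tilde f_{\alpha,k} = f_{\alpha,k}$ holding pointwise on $\mathcal{D}$ by quasi-homogeneity, cf. \eqref{identity-f-horizon}; here $g^{(0)}$ collects the quasi-homogeneous contributions in \eqref{desing-vector} and $g^{(1)}$ the residual ones. Assumption \ref{ass-f} gives $\tilde f_{\rm res}({\bf x}_\ast)=0$ and Lemma \ref{lem-ass-f} gives $D\tilde f_{\rm res}({\bf x}_\ast)=0$; feeding these into the $s$-derivative of $g^{(1)}(\sigma(s))$, whose terms are products of $\tilde f_{\rm res}$ or its gradient, both vanishing at ${\bf x}_\ast$, with quantities that stay bounded as $s\to 1^-$, shows that $g^{(1)}$ contributes nothing at $s=1$.

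The substance is then the closed-form evaluation of $g^{(0)}(\sigma(s))$. Using quasi-homogeneity of $f_{\alpha,k}$, the equilibrium relation $f_{\alpha,k}({\bf x}_\ast) = C_\ast\Lambda_\alpha{\bf x}_\ast$ from \eqref{const-horizon-0-Cast}, and the normalizations $\alpha_j\beta_j = c$ from \eqref{LCM} together with $\sum_j x_{\ast,j}^{2\beta_j} = p({\bf x}_\ast)^{2c} = 1$, I expect $f_{\alpha,k}(\sigma(s)) = C_\ast s^k\Lambda_\alpha\sigma(s)$, $G(\sigma(s)) = C_\ast s^{k+2c}$, and the prefactor $1-\frac{2c-1}{2c}(1-p(\sigma(s))^{2c}) = \frac{1}{2c}+\frac{2c-1}{2c}s^{2c}$, so that the $i$-th component collapses to $g^{(0)}_i(\sigma(s)) = \frac{C_\ast\alpha_i x_{\ast,i}}{2c}\left(s^{k+\alpha_i} - s^{k+2c+\alpha_i}\right)$. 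Differentiating at $s=1$ yields $\frac{C_\ast\alpha_i x_{\ast,i}}{2c}\big((k+\alpha_i)-(k+2c+\alpha_i)\big) = -C_\ast\alpha_i x_{\ast,i}$, i.e. $-C_\ast{\bf v}_{\ast,\alpha}$, which is the claim.

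The main obstacle is twofold: (i) justifying rigorously that the residual drops out at the horizon, where $\kappa\to\infty$ — this is precisely the role of the derivative bounds in Assumption \ref{ass-f} and is already packaged in Lemma \ref{lem-ass-f}; and (ii) the bookkeeping in $g^{(0)}$, ensuring the exponents of $s$ combine so that the two monomials differ by exactly the power $2c$, which is what produces the clean factor $-2c$ cancelling the $1/2c$. As an alternative avoiding the curve, one may differentiate \eqref{desing-vector} directly by the product rule and apply $Dg({\bf x}_\ast)$ to $\Lambda_\alpha{\bf x}_\ast$; there the analogous crux is evaluating $(\nabla G({\bf x}_\ast))^T\Lambda_\alpha{\bf x}_\ast$, where a spurious term $\sum_j\alpha_j x_{\ast,j}^{2\beta_j}$ must cancel between the contribution of the weights $x_j^{2\beta_j-1}$ and that of $D\tilde f({\bf x}_\ast)\Lambda_\alpha{\bf x}_\ast = (kI+\Lambda_\alpha)f_{\alpha,k}({\bf x}_\ast)$ obtained via Lemma \ref{lem-identity-QHvf}.
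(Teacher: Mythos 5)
Your proof is correct, and it takes a genuinely different route from the paper's. The paper proceeds purely algebraically: it differentiates \eqref{desing-vector} by the product rule, writes $Dg({\bf x}_\ast) = A_g + B_g$ with $A_g = -C_\ast\Lambda_\alpha + D\tilde f({\bf x}_\ast)$ and $B_g = -{\bf v}_{\ast,\alpha}\nabla p({\bf x}_\ast)^T(A_g + C_\ast I)$, computes $\nabla G({\bf x}_\ast)$ explicitly, uses Lemma \ref{lem-ass-f} and the Euler identity of Lemma \ref{lem-identity-QHvf} to obtain $A_g{\bf v}_{\ast,\alpha} = kC_\ast{\bf v}_{\ast,\alpha}$, and then uses $\nabla p({\bf x}_\ast)^T{\bf v}_{\ast,\alpha}=1$ to get $B_g{\bf v}_{\ast,\alpha} = -(k+1)C_\ast{\bf v}_{\ast,\alpha}$, so that $-C_\ast$ appears as $kC_\ast-(k+1)C_\ast$; this is exactly the ``alternative avoiding the curve'' you sketch in your last sentences, including the cancellation hidden in $\nabla G({\bf x}_\ast)^T\Lambda_\alpha{\bf x}_\ast$. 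You instead evaluate $g$ along the scaling ray $\sigma(s)=s^{\Lambda_\alpha}{\bf x}_\ast$ and differentiate at $s=1$: your closed forms $p(\sigma(s))=s$, the quasi-homogeneous part of $G$ along the ray equal to $C_\ast s^{k+2c}$ (via \eqref{LCM}, $p({\bf x}_\ast)=1$ and \eqref{const-horizon-0-Cast}), and $g^{(0)}_i(\sigma(s)) = \frac{C_\ast\alpha_i x_{\ast,i}}{2c}\left(s^{k+\alpha_i}-s^{k+2c+\alpha_i}\right)$ all check out, and the residual contribution is correctly annihilated at $s=1$ by Assumption \ref{ass-f} together with Lemma \ref{lem-ass-f} (both $\tilde f_{\rm res}$ and $D\tilde f_{\rm res}$ vanish at ${\bf x}_\ast$), the one-sided chain rule being justified by the $C^1$-smoothness of $g$ up to the horizon (Remark \ref{rem-choice-cpt}). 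What your approach buys is conceptual transparency and lighter bookkeeping: it exhibits $-C_\ast$ as the linearization at $s=1$ of the explicit scalar dynamics that $g$ induces along the blow-up direction, with no rank-one correction to track. What the paper's approach buys is reusability: the objects it constructs en route --- the matrix $A_g$, the rank-one factor ${\bf v}_{\ast,\alpha}\nabla p({\bf x}_\ast)^T$, and the relation $A_g{\bf v}_{\ast,\alpha}=kC_\ast{\bf v}_{\ast,\alpha}$ in (\ref{vector-evkC}) --- are precisely the ingredients of Proposition \ref{prop-proj-B}, Proposition \ref{prop-corr-Dg-Ag} and Theorem \ref{thm-evec-Dg}, so your argument, while cleaner for this single statement, would still need to be supplemented by those computations for the full eigenstructure correspondence.
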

\KMi{
Before the proof, it should be noted that
the inner product of the gradient $\nabla p({\bf x}_\ast)$ at an equilibrium ${\bf x}_\ast \in \mathcal{E}$ given in (\ref{grad-xast}) and the vector ${\bf v}_{\ast,\alpha}$ is unity:
\begin{equation}
\label{inner-gradp-v}
\nabla p({\bf x}_\ast)^T {\bf v}_{\ast, \alpha} = \frac{1}{c}\sum_{l=1}^n \beta_lx_{\ast, l}^{2\beta_l-1} \alpha_l x_{\ast, l} = \frac{c}{c} \sum_{l=1}^n x_{\ast, l}^{2\beta_l} = 1.
\end{equation}
}

\begin{proof}[Proof of Theorem \ref{thm-ev-special}]
First, it follows from (\ref{desing-vector}) that
\begin{align*}
Dg({\bf x}) &= (2c-1)p({\bf x})^{2c-1}\tilde f({\bf x})\nabla p({\bf x})^T +  \left( 1-\frac{2c-1}{2c}(1-p({\bf x})^{2c})\right)  D\tilde f({\bf x})\\
	&\quad  - (\alpha_1 x_1, \ldots, \alpha_n x_n)^T \nabla G({\bf x})^T - G({\bf x})\Lambda_\alpha, \\
Dg({\bf x}_\ast) &= (2c-1)\tilde f({\bf x}_\ast)\nabla p({\bf x}_\ast)^T + D\tilde f({\bf x}_\ast) - {\bf v}_{\ast, \alpha}\nabla G({\bf x}_\ast)^T - G({\bf x}_\ast)\Lambda_\alpha \quad \text{(from the definition of ${\bf v}_{\ast, \alpha}$)}\\
	&= (2c-1)\tilde f({\bf x}_\ast)\nabla p({\bf x}_\ast)^T + D\tilde f({\bf x}_\ast) - {\bf v}_{\ast, \alpha}\nabla G({\bf x}_\ast)^T - C_\ast \Lambda_\alpha \quad \KMi{\text{(from (\ref{const-horizon}))}}\\
	&= \left\{ - C_\ast \Lambda_\alpha + D\tilde f({\bf x}_\ast) \right\} + {\bf v}_{\ast, \alpha} \left( (2c-1) C_\ast \nabla p({\bf x}_\ast) - \nabla G({\bf x}_\ast) \right)^T \quad \text{(from (\ref{const-horizon-0}))}.
\end{align*}
Next, using (\ref{Gx}) and (\ref{const-horizon}), we have
\begin{align*}
\nabla G({\bf x}_\ast) &= {\rm diag}\left(\frac{2\beta_1-1}{\alpha_1}x_{\ast, 1}^{2\beta_1-2},\ldots, \frac{2\beta_n-1}{\alpha_n}x_{\ast, n}^{2\beta_n-2}\right)C_\ast {\bf v}_{\ast, \alpha} 
	+ (A_g + C_\ast \Lambda_\alpha)^T \left(\frac{x_{\ast, 1}^{2\beta_1-1}}{\alpha_1},\ldots, \frac{x_{\ast, n}^{2\beta_n-1}}{\alpha_n}\right)^T \\
	&= C_\ast \left(2\beta_1 x_{\ast, 1}^{2\beta_1-1},\ldots, 2\beta_n x_{\ast, n}^{2\beta_n-1}\right)^T
	+ A_g^T  \nabla p({\bf x}_\ast) \\
	&= 2cC_\ast \nabla p({\bf x}_\ast) + A_g^T  \nabla p({\bf x}_\ast),
\end{align*}
where
\begin{align*}
A_g &:= -C_\ast \Lambda_\alpha + D\tilde f({\bf x}_\ast).
\end{align*}
The Jacobian matrix $Dg({\bf x}_\ast)$ \KMh{then} has a decomposition $Dg({\bf x}_\ast) = A_g + B_g$, where 
\begin{align}
\label{Bg}
B_g &:= - {\bf v}_{\ast, \alpha} \nabla p({\bf x}_\ast)^T (A_g + C_\ast I).
\end{align}
Now Lemma \ref{lem-ass-f} implies
\begin{equation}
\label{Ag-QH}
A_g = -C_\ast \Lambda_\alpha + D\tilde f_{\alpha, k}({\bf x}_\ast).
\end{equation}
Therefore the same idea as the proof of Theorem \ref{thm-ev1} can be applied to obtaining
\begin{align*}
A_g\KMg{ \Lambda_\alpha } {\bf x}_\ast
&= (- \KMg{ C_\ast\Lambda_\alpha } + D\tilde f_{\alpha,k}({\bf x}_\ast))\KMg{ \Lambda_\alpha } {\bf x}_\ast \quad \KMh{\text{(from (\ref{Ag-QH}))}}\\
&= - \KMg{ C_\ast\Lambda_\alpha^2 } {\bf x}_\ast + (k I+ \Lambda_\alpha ) \tilde f_{\alpha, k}({\bf x}_\ast) \quad \KMi{\text{(from (\ref{temporay-label4}))}}\\ 
&= - \KMg{ C_\ast\Lambda_\alpha^2 } {\bf x}_\ast + C_\ast(k I+ \Lambda_\alpha )\KMg{ \Lambda_\alpha } {\bf x}_\ast  \quad \KMi{\text{(from (\ref{const-horizon-0-Cast}))}}\\ 
&= k\KMg{ C_\ast \Lambda_\alpha } {\bf x}_\ast,
\end{align*}
which shows that the matrix $A_g$ admits an eigenvector ${\bf v}_{\ast,\alpha}$ with associated eigenvalue $kC_\ast$.
In other words,
\begin{equation}
\label{vector-evkC}
A_g {\bf v}_{\ast,\alpha} = kC_\ast {\bf v}_{\ast,\alpha}.
\end{equation}
From (\ref{Bg}) and (\ref{inner-gradp-v}), we have
\begin{align*}
B_g {\bf v}_{\ast,\alpha}  &= - {\bf v}_{\ast, \alpha} \nabla p({\bf x}_\ast)^T (A_g + C_\ast I){\bf v}_{\ast,\alpha}\\
	&= - (k+1)C_\ast {\bf v}_{\ast, \alpha} \nabla p({\bf x}_\ast)^T {\bf v}_{\ast,\alpha}\quad \text{(from (\ref{vector-evkC}))}\\
	 &= - (k+1)C_\ast {\bf v}_{\ast, \alpha}\KMh{.} \quad \KMf{ \text{(from (\ref{inner-gradp-v}))} }
\end{align*}
Therefore we have
\begin{align*}
Dg({\bf x}_\ast) {\bf v}_{\ast,\alpha} = (A_g + B_g) {\bf v}_{\ast,\alpha} = \{ kC_\ast - (k+1)C_\ast\} {\bf v}_{\ast,\alpha} &= -C_\ast {\bf v}_{\ast,\alpha}
\end{align*}
and, as a consequence, the vector ${\bf v}_{\ast,\alpha}$ is an eigenvector of $Dg({\bf x}_\ast)$ associated with $-C_\ast$ and the proof is completed.
\end{proof}

This theorem and (\ref{inner-gradp-v}) imply that the eigenvector ${\bf v}_{\ast,\alpha}$ is transversal to the tangent space $T_{{\bf x}_\ast}\mathcal{E}$.
Combined with the $Dg$-invariance of the tangent bundle $T\mathcal{E}$ (cf. Remark \ref{rem-invariance}), we conclude that the eigenvector ${\bf v}_{\ast,\alpha}$ provides the blow-up direction in the linear sense.
Comparing Theorem \ref{thm-ev1} with Theorem \ref{thm-ev-special}, the eigenpair $\{1, {\bf v}_{0,\alpha}\}$ of the blow-up power-determining matrix $A$ provides a characteristic information of blow-up solutions.
\KMf{Similarly}, from the eigenpair $\{\KMf{-C_\ast}, {\bf v}_{\ast,\alpha}\}$, a direction of trajectories ${\bf x}(\tau)$ for (\ref{desing-para}) converging to ${\bf x}_\ast$ is uniquely determined.
Using this fact, we obtain the following corollary, which justifies the correspondence stated in Theorem \ref{thm-balance-1to1}.

\begin{cor}
\label{cor-Cast-pos}
\KMe{Suppose that Assumption \ref{ass-f} except the condition of $C_\ast$ holds}.
Let ${\bf x}_\ast\in \mathcal{E}$ be a hyperbolic equilibrium for $g$ satisfying assumptions in Lemma \ref{lem-Cast-nonneg}.
Then the constant $C_\ast = C_\ast({\bf x}_\ast)$ given in (\ref{const-horizon}) is positive.
\end{cor}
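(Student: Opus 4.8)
The plan is to start from what Lemma~\ref{lem-Cast-nonneg} already gives us. Under the hypotheses inherited from that lemma (hyperbolicity of ${\bf x}_\ast$ together with $W^s_{\rm loc}({\bf x}_\ast; g)\cap \mathcal{D}\neq \emptyset$), we immediately obtain $C_\ast \geq 0$. Hence the entire content of the corollary reduces to excluding the borderline possibility $C_\ast = 0$. The strategy for ruling this out is to combine the algebraic eigenpair produced in Theorem~\ref{thm-ev-special} with the spectral definition of hyperbolicity, namely ${\rm Spec}(Dg({\bf x}_\ast)) \cap i\mathbb{R} = \emptyset$, which in particular forbids $0$ from lying in the spectrum.

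The key observation I would make explicit is that the eigenpair conclusion of Theorem~\ref{thm-ev-special}, that is, $Dg({\bf x}_\ast){\bf v}_{\ast,\alpha} = -C_\ast {\bf v}_{\ast,\alpha}$ with ${\bf v}_{\ast,\alpha} = \Lambda_\alpha {\bf x}_\ast$, does not in fact rely on the sign condition $C_\ast > 0$. Inspecting that proof, the positivity of $C_\ast$ is used only to define the scaling parameter $r_{{\bf x}_\ast}$ in (\ref{balance-C01}) and to set up the one-to-one correspondence of Theorem~\ref{thm-balance-1to1}; the eigenvalue computation itself is purely algebraic, resting on the identities (\ref{const-horizon}), (\ref{const-horizon-0}), (\ref{const-horizon-0-Cast}) and (\ref{inner-gradp-v}) together with Lemma~\ref{lem-ass-f}. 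Since Lemma~\ref{lem-ass-f} in turn uses only the decay part of Assumption~\ref{ass-f}, which we are assuming here, the eigenpair $\{-C_\ast, {\bf v}_{\ast,\alpha}\}$ persists verbatim in the present setting. I would also record that ${\bf v}_{\ast,\alpha} = \Lambda_\alpha {\bf x}_\ast$ is nonzero: because ${\bf x}_\ast \in \mathcal{E}$ forces $\sum_i x_{\ast,i}^{2\beta_i} = 1$, some component $x_{\ast,i}$ with $\alpha_i \neq 0$ is nonzero, whence $\alpha_i x_{\ast,i} \neq 0$.

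The conclusion then follows in one line: $-C_\ast$ is a genuine eigenvalue of $Dg({\bf x}_\ast)$ (it has the nonzero eigenvector ${\bf v}_{\ast,\alpha}$), so hyperbolicity gives $-C_\ast \neq 0$, i.e.\ $C_\ast \neq 0$; combined with $C_\ast \geq 0$ from Lemma~\ref{lem-Cast-nonneg} this yields $C_\ast > 0$. The main obstacle, and the only genuinely delicate point, is the bookkeeping in the second paragraph: I must argue carefully that invoking Theorem~\ref{thm-ev-special} here is not circular, by isolating exactly which of its hypotheses the eigenpair computation consumes and confirming that the dropped clause $C_\ast > 0$ is never among them. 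Everything else is immediate once that dependency check is made.
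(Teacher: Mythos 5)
Your proof is correct and follows essentially the same route as the paper's: the paper likewise combines Lemma~\ref{lem-Cast-nonneg} (giving $C_\ast \geq 0$) with the fact that $-C_\ast \in {\rm Spec}(Dg({\bf x}_\ast))$ from Theorem~\ref{thm-ev-special} and hyperbolicity to exclude $C_\ast = 0$. Your explicit verification that the eigenpair computation in Theorem~\ref{thm-ev-special} never consumes the $C_\ast > 0$ clause of Assumption~\ref{ass-f} (only the decay conditions via Lemma~\ref{lem-ass-f} and the algebraic identities), together with the observation that ${\bf v}_{\ast,\alpha} = \Lambda_\alpha {\bf x}_\ast \neq 0$, is a dependency check the paper leaves implicit, and it is exactly the right bookkeeping to confirm the argument is not circular.
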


\begin{proof}
\KMe{Because} ${\bf x}_\ast$ is hyperbolic, then ${\rm Spec}(Dg({\bf x}_\ast))\cap i\mathbb{R} = \emptyset$.
In particular, $C_\ast \not = 0$ since $-C_\ast \in {\rm Spec}(Dg({\bf x}_\ast))$.
Combining this fact with Lemma \ref{lem-Cast-nonneg}, we have $C_\ast > 0$.
\end{proof}

\begin{rem}
The above corollary provides a sufficient condition so that $C_\ast > 0$ is satisfied, while the converse is not always true.
In other words, the nontrivial intersection ${\rm Spec}(Dg({\bf x}_\ast))\cap i\mathbb{R}$ can exist even if $C_\ast > 0$.
\end{rem}

\begin{rem}[Similarity to Painlev\'{e}-type analysis]
\label{rem-Painleve}
In studies involving Painlev\'{e}-type property from the viewpoint of algebraic geometry (e.g. \cite{AM1989, C2015}), the necessary and sufficient conditions which the {\em complex} ODE of the form 
\begin{equation}
\label{ODE-complex}
\frac{du}{dz} = f(z,u),\quad z\in \mathbb{C}
\end{equation}
possesses meromorphic solutions are considered.
The key point is that the matrix induced by (\ref{ODE-complex}) called {\em Kovalevskaya matrix}, essentially the same as the blow-up power-determining matrix, is diagonalizable and all eigenvalues, which are referred to as {\em Kovalevskaya exponents} in \cite{C2015, C2016_124, C2016_356}, are integers, in which case the meromorphic solutions generate a parameter family.
The number of free parameters is determined by that of integer eigenvalues with required sign.
\par
It is proved in the preceding studies that the Kovalevskaya matrix always admits the eigenvalue $-1$ and the associated eigenstructure is uniquely determined.
Theorem \ref{thm-ev1} is therefore regarded as a counterpart of the result to blow-up description.
Difference of the sign \lq\lq $-1$" from our result \lq\lq $+1$" comes from the different form of expansions of solutions.
Indeed, solutions as functions of $z-z_0$ with a movable singularity $z_0\in \mathbb{C}$ are considered in the Painlev\'{e}-type analysis, while solutions as functions of $\theta(t) =  t_{\max} - t$ are considered in the present study.
\end{rem}

\subsection{Remaining eigenstructure of $A$ and tangent spaces on the horizon}

As mentioned \KMi{before} the proof of Theorem \ref{thm-ev-special}, the vector ${\bf v}_{\ast, \alpha}$ is transversal \KMe{to} the horizon $\mathcal{E}$.
\KMe{Also, as} mentioned in Remark \ref{rem-invariance}, the horizon $\mathcal{E}$ is \KMe{a} codimension one invariant manifold for $g$ and hence the remaining $n-1$ independent (generalized) eigenvectors of $Dg({\bf x}_\ast)$ \KMe{span} the tangent space $T_{{\bf x}_\ast}\mathcal{E}$.
Our aim here is to investigate these eigenvectors and the correspondence among those for $Dg({\bf x}_\ast)$, $A_g$ and $A$.
We shall see below that the matrix  $B_g$ \KMe{given} in (\ref{Bg}) plays a key role whose essence is the determination of the following \KMb{object}.

\begin{prop}
\label{prop-proj-B}
Let $P_\ast := {\bf v}_{\ast,\alpha}\nabla p({\bf x}_\ast)^T$. 
Then $P_\ast$ as the linear mapping on $\mathbb{R}^n$ is the (nonorthogonal) projection\footnote{
\KMe{In the homogeneous case $\alpha = (1,\ldots, 1)$, this is orthogonal.}
} onto ${\rm span}\{{\bf v}_{\ast,\alpha}\}$.
Similarly, the map $I-P_\ast$ is the (nonorthogonal) projection onto the tangent space $T_{{\bf x}_\ast}\mathcal{E}$. 
\end{prop}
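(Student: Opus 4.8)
The plan is to treat $P_\ast$ as a rank-one operator built from the outer product ${\bf v}_{\ast,\alpha}\nabla p({\bf x}_\ast)^T$ and to verify idempotency directly, the only nontrivial input being the normalization (\ref{inner-gradp-v}). First I would compute
\[
P_\ast^2 = {\bf v}_{\ast,\alpha}\left(\nabla p({\bf x}_\ast)^T {\bf v}_{\ast,\alpha}\right)\nabla p({\bf x}_\ast)^T = {\bf v}_{\ast,\alpha}\nabla p({\bf x}_\ast)^T = P_\ast,
\]
where the scalar middle factor equals $1$ by (\ref{inner-gradp-v}). This shows $P_\ast$ is idempotent, hence a projection. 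Its range is then immediate: for every ${\bf w}\in\mathbb{R}^n$ we have $P_\ast {\bf w} = (\nabla p({\bf x}_\ast)^T {\bf w}){\bf v}_{\ast,\alpha}\in {\rm span}\{{\bf v}_{\ast,\alpha}\}$, while conversely $P_\ast {\bf v}_{\ast,\alpha} = {\bf v}_{\ast,\alpha}$ again by (\ref{inner-gradp-v}), so the range is exactly ${\rm span}\{{\bf v}_{\ast,\alpha}\}$. Note in passing that ${\bf v}_{\ast,\alpha}\neq 0$, since its pairing with $\nabla p({\bf x}_\ast)$ is unity.

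For the complementary statement I would use that $I-P_\ast$ is automatically a projection once $P_\ast$ is, because $(I-P_\ast)^2 = I - 2P_\ast + P_\ast^2 = I-P_\ast$, and that its range equals $\ker P_\ast$. It then remains to identify $\ker P_\ast$ with $T_{{\bf x}_\ast}\mathcal{E}$. Since ${\bf v}_{\ast,\alpha}\neq 0$, we have $P_\ast {\bf w} = 0$ if and only if $\nabla p({\bf x}_\ast)^T {\bf w} = 0$, so $\ker P_\ast$ is the orthogonal complement of $\nabla p({\bf x}_\ast)$. The horizon $\mathcal{E} = \{p=1\}$ is a regular level set of $p$ at ${\bf x}_\ast$ — the gradient (\ref{grad-xast}) is nonzero there — hence its tangent space is precisely that orthogonal complement, giving $\ker P_\ast = T_{{\bf x}_\ast}\mathcal{E}$ and therefore ${\rm range}(I-P_\ast) = T_{{\bf x}_\ast}\mathcal{E}$.

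The computation is routine; the single point deserving care is the geometric identification $T_{{\bf x}_\ast}\mathcal{E} = \{{\bf w} : \nabla p({\bf x}_\ast)^T{\bf w} = 0\}$ together with the induced direct-sum decomposition $\mathbb{R}^n = {\rm span}\{{\bf v}_{\ast,\alpha}\}\oplus T_{{\bf x}_\ast}\mathcal{E}$, which is exactly the transversality of ${\bf v}_{\ast,\alpha}$ to $\mathcal{E}$ recorded just after (\ref{inner-gradp-v}); this is what guarantees $P_\ast$ is a genuine oblique projection \emph{along} the tangent space, not merely an idempotent. I expect no real obstacle here: the normalization (\ref{inner-gradp-v}) does essentially all the work, and the nonorthogonality claim in the statement is just the observation that ${\bf v}_{\ast,\alpha}$ and $\nabla p({\bf x}_\ast)$ are not parallel unless $\alpha = (1,\ldots,1)$.
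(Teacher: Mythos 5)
Your proof is correct and follows essentially the same route as the paper's: both rest entirely on the normalization (\ref{inner-gradp-v}) to obtain idempotency of the rank-one operator ${\bf v}_{\ast,\alpha}\nabla p({\bf x}_\ast)^T$ and on the identification $T_{{\bf x}_\ast}\mathcal{E} = ({\rm span}\{\nabla p({\bf x}_\ast)\})^\bot$. If anything, yours is slightly more complete, since the paper only verifies that $I-P_\ast$ maps into $T_{{\bf x}_\ast}\mathcal{E}$ and that $P_\ast$ is idempotent, whereas you also pin down the exact ranges of both maps and the resulting direct-sum decomposition.
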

\begin{proof}
Note that the tangent space $T_{{\bf x}_\ast}\mathcal{E}$ is the orthogonal complement of the gradient $\nabla p({\bf x}_\ast)$.
We have
\begin{align*}
\nabla p({\bf x}_\ast)^T \KMg{(I-P_\ast)}
	&=  \nabla p({\bf x}_\ast)^T -  \nabla p({\bf x}_\ast)^T {\bf v}_{\ast,\alpha}\nabla p({\bf x}_\ast)^T \\
	&=  \nabla p({\bf x}_\ast)^T - (\nabla p({\bf x}_\ast)^T {\bf v}_{\ast,\alpha} )\nabla p({\bf x}_\ast)^T\\
	&=  \nabla p({\bf x}_\ast)^T - \nabla p({\bf x}_\ast)^T\\
	&= 0,
\end{align*}
which yields \KMg{that $(\KMg{I} - P_\ast)$ maps $\mathbb{R}^n$ to $({\rm span}\{\nabla p({\bf x}_\ast)\})^\bot = T_{{\bf x}_\ast}\mathcal{E}$.}
Moreover, the above indentity implies
\begin{equation*}
P_\ast ( \KMg{I} -P_\ast) = P_\ast - P_\ast^2 = 0,
\end{equation*}
that is, $P_\ast$ is idempotent and hence is a projection.
\end{proof}
Using the projection $P_\ast$, the matrix $B_g$, and hence $Dg({\bf x}_\ast)$ is rewritten as follows:
\begin{align}
\label{Bg-easy}
\KMg{B_g} &= \KMg{ -P_\ast (A_g + C_\ast \KMg{I})},\\
\label{Dg-proj}
Dg({\bf x}_\ast) &= A_g + B_g
	= A_g - P_\ast (A_g + C_\ast \KMg{I}) 
	= (\KMg{I} - P_\ast)A_g - C_\ast P_\ast.
\end{align}

Using this expression, we obtain the following proposition, which plays a key role in characterizing the correspondence of eigenstructures among different matrices.

\begin{prop}
\label{prop-corr-Dg-Ag}
For any $\lambda \in \mathbb{C}$ and $N\in \KMi{\mathbb{N}}$, we have
\begin{equation}
\label{Dg-I-P}
(Dg({\bf x}_\ast) - \lambda I)^N (I -P_\ast) = (I -P_\ast) (A_g - \lambda I)^N
\end{equation}
\KMi
{
and
\begin{equation}
\label{Dg-I-P-2}
(A_g - k C_\ast I) ( Dg({\bf x}_\ast) - \lambda I)^N (I-P_\ast) =  (A_g - k C_\ast I) (A_g- \lambda I)^N = (A_g- \lambda I)^N (A_g - k C_\ast I).
\end{equation}
}
\end{prop}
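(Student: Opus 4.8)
The plan is to reduce both identities to a single rank-one relation, namely
\[
A_g P_\ast = k C_\ast P_\ast,
\]
from which everything follows by elementary manipulations with the idempotent $P_\ast$. To obtain this relation I would start from the definition $P_\ast = {\bf v}_{\ast,\alpha}\nabla p({\bf x}_\ast)^T$ together with the eigenvector identity (\ref{vector-evkC}), $A_g {\bf v}_{\ast,\alpha} = kC_\ast {\bf v}_{\ast,\alpha}$; multiplying the latter on the right by $\nabla p({\bf x}_\ast)^T$ gives $A_g P_\ast = (A_g {\bf v}_{\ast,\alpha})\nabla p({\bf x}_\ast)^T = kC_\ast P_\ast$ at once. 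Using that $P_\ast$ is idempotent (Proposition \ref{prop-proj-B}), two immediate consequences are the annihilation identities
\[
(I - P_\ast)A_g P_\ast = kC_\ast(P_\ast - P_\ast^2) = 0, \qquad (A_g - kC_\ast I)P_\ast = kC_\ast P_\ast - kC_\ast P_\ast = 0.
\]

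For the identity (\ref{Dg-I-P}) I would first establish the $N=1$ intertwining. Using the decomposition (\ref{Dg-proj}), $Dg({\bf x}_\ast) = (I - P_\ast)A_g - C_\ast P_\ast$, together with $P_\ast(I - P_\ast) = 0$ and the first annihilation identity, I compute
\[
Dg({\bf x}_\ast)(I - P_\ast) = (I - P_\ast)A_g(I - P_\ast) = (I - P_\ast)A_g - (I - P_\ast)A_g P_\ast = (I - P_\ast)A_g,
\]
so that $(Dg({\bf x}_\ast) - \lambda I)(I - P_\ast) = (I - P_\ast)(A_g - \lambda I)$. The general case then follows by induction on $N$: assuming the claim for $N$, I write $(Dg({\bf x}_\ast) - \lambda I)^{N+1}(I - P_\ast) = (Dg({\bf x}_\ast) - \lambda I)\,(I - P_\ast)(A_g - \lambda I)^N$ and apply the $N=1$ intertwining to the leftmost factor, producing $(I - P_\ast)(A_g - \lambda I)^{N+1}$. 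The key point making the induction go through is that the factor $(A_g - \lambda I)^N$ stays on the right throughout, so the base relation can be reapplied verbatim without ever attempting to invert the projection.

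For (\ref{Dg-I-P-2}) I would multiply (\ref{Dg-I-P}) on the left by $(A_g - kC_\ast I)$, giving $(A_g - kC_\ast I)(Dg({\bf x}_\ast) - \lambda I)^N(I - P_\ast) = (A_g - kC_\ast I)(I - P_\ast)(A_g - \lambda I)^N$, and then use the second annihilation identity $(A_g - kC_\ast I)P_\ast = 0$ to replace $(I - P_\ast)$ by $I$, yielding $(A_g - kC_\ast I)(A_g - \lambda I)^N$. The final equality $(A_g - kC_\ast I)(A_g - \lambda I)^N = (A_g - \lambda I)^N(A_g - kC_\ast I)$ is merely the commutativity of two polynomials in the single matrix $A_g$.

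There is no genuine analytic obstacle: the content is entirely linear-algebraic and all the geometric input (the eigenpair $\{kC_\ast, {\bf v}_{\ast,\alpha}\}$, the projection structure, and the normalization $\nabla p({\bf x}_\ast)^T{\bf v}_{\ast,\alpha} = 1$) is already packaged in the cited results. The only steps demanding care are the bookkeeping in the induction, keeping $(I - P_\ast)$ on the correct side and invoking the $N=1$ intertwining rather than trying to cancel a non-invertible projection, and recognizing at the outset that the whole proposition collapses onto the single identity $A_g P_\ast = kC_\ast P_\ast$.
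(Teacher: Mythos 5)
Your proposal is correct and follows essentially the same route as the paper: the same decomposition $Dg({\bf x}_\ast) = (I-P_\ast)A_g - C_\ast P_\ast$ from (\ref{Dg-proj}), the same key annihilation identity $(A_g - kC_\ast I)P_\ast = 0$ obtained by multiplying (\ref{vector-evkC}) on the right by $\nabla p({\bf x}_\ast)^T$, the same $N=1$ intertwining followed by iteration, and the same left-multiplication plus commutativity of polynomials in $A_g$ for (\ref{Dg-I-P-2}). Your write-up merely makes the induction explicit where the paper leaves it implicit.
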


\begin{proof}
\KMf{Because} $P_\ast$ is a projection, we have
\begin{align}
\notag
Dg({\bf x}_\ast)( \KMg{I} -P_\ast) &= (\KMg{I} - P_\ast)A_g ( \KMg{I} - P_\ast) - C_\ast P_\ast ( \KMg{I} - P_\ast)\\
\label{Dg-I-P-1}
	&= (\KMg{I} - P_\ast)A_g - (\KMg{I} - P_\ast)A_g  P_\ast.
\end{align}
Moreover, multiplying the identity (\ref{vector-evkC}) by $\nabla p({\bf x}_\ast)^T$ from the right, we have
\begin{equation}
\label{A-kC-ortho-P}
\KMi{(A_g - kC_\ast I) P_\ast = 0}.
\end{equation}
Then
\begin{equation*}
(\KMg{I} - P_\ast)A_g  P_\ast = (I-P_\ast) k C_\ast P_\ast = 0.
\end{equation*}
Therefore, it follows from (\ref{Dg-I-P-1}) that
\begin{equation*}
Dg({\bf x}_\ast)( \KMg{I} -P_\ast) = (\KMg{I} - P_\ast)A_g.
\end{equation*}
Then, for any {\em complex} number $\lambda$ and for any $N\in \KMi{\mathbb{N}}$, we have the first statement.
\par
As for the second statement, direct calculations yield
\KMi{
\begin{align*}
(A_g - k C_\ast I) ( Dg({\bf x}_\ast) - \lambda I)^N (I-P_\ast)  
&= (A_g - k C_\ast I) (I-P_\ast) (A_g- \lambda I)^N \quad \text{(from (\ref{Dg-I-P}))}\\
&= (A_g - k C_\ast I) (A_g- \lambda I)^N\quad \text{(from (\ref{A-kC-ortho-P}))} \\
&= (A_g- \lambda I)^N (A_g - k C_\ast I).
\end{align*}
}
\end{proof}

The formula (\ref{Dg-I-P}) yields the correspondence of eigenstructures among $Dg({\bf x}_\ast)$ and $A_g$ in a simple way.

\begin{thm}
\label{thm-evec-Dg}
Let ${\bf x}_\ast\in \mathcal{E}$ be an equilibrium on the horizon for $g$ and suppose that Assumption \ref{ass-f} holds.
\begin{enumerate}
\item Assume that $\lambda \in {\rm Spec}(A_g)$ and let ${\bf w}\in \mathbb{C}^n$ be such that
${\bf w}\in \ker((A_g - \lambda I)^{m_\lambda})\setminus \ker((A_g - \lambda I)^{m_\lambda -1})$ with $(I-P_\ast){\bf w}\not = 0$ for some $m_\lambda \in \KMi{\mathbb{N}}$. 
\begin{itemize}
\item \KMi{If $\lambda \not = kC_\ast$,} then $(I -P_\ast){\bf w} \in \ker((Dg({\bf x}_\ast) - \lambda I)^{m_\lambda})\setminus \ker((Dg({\bf x}_\ast) - \lambda I)^{m_\lambda -1})$.
\item \KMi{If $\lambda = kC_\ast$, then either $(I-P_\ast){\bf w}\in \ker((Dg({\bf x}_\ast) - kC_\ast I)^{m_\lambda})\setminus \ker((Dg({\bf x}_\ast) - kC_\ast I)^{m_\lambda -1})$ 
or $(I - P_\ast){\bf w}\in \ker((Dg({\bf x}_\ast) - kC_\ast I)^{m_\lambda-1})\setminus \ker((Dg({\bf x}_\ast) - kC_\ast I)^{m_\lambda -2})$ holds.} 
\end{itemize}
\item 
Conversely, assume that $\lambda_g \in {\rm Spec}(Dg({\bf x}_\ast))$ and let ${\bf w}_g\in \mathbb{C}^n$ be such that
$(I - P_\ast){\bf w}_g\in \ker((Dg({\bf x}_\ast) - \lambda_g I)^{m_{\lambda_g}})\setminus \ker((Dg({\bf x}_\ast) - \lambda_g I)^{m_{\lambda_g} -1})$ with $(I-P_\ast){\bf w}_g\not = 0$ for some $m_{\lambda_g} \in \KMi{\mathbb{N}}$. 
\begin{itemize}
\item If $\lambda_g \not = kC_\ast$, then $(A_g - kC_\ast I){\bf w}_g \in \ker((A_g - \lambda_g I)^{m_{\lambda_g}})\setminus \ker((A_g - \lambda_g I)^{m_{\lambda_g} -1})$.
\item If $\lambda_g = kC_\ast$, \KMi{then either $(A_g - kC_\ast I){\bf w}_g\in \ker((A_g - kC_\ast I)^{m_{\lambda_g}})\setminus \ker((A_g - kC_\ast I)^{m_{\lambda_g} -1})$ 
or $(A_g - kC_\ast I){\bf w}_g\in \ker((A_g - kC_\ast I)^{m_{\lambda_g}+1})\setminus \ker((A_g- kC_\ast I)^{m_{\lambda_g}})$ holds.}
\end{itemize}
\end{enumerate}
\end{thm}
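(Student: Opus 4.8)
The plan is to read both directions as consequences of the intertwining relations in Proposition~\ref{prop-corr-Dg-Ag}, viewing (\ref{Dg-I-P}) as the statement that $I-P_\ast$ carries the $A_g$-action to the $Dg({\bf x}_\ast)$-action, and (\ref{Dg-I-P-2}) as the reverse transfer mediated by $A_g-kC_\ast I$. Throughout I would lean on two structural facts: that $\ker(I-P_\ast)=\mathrm{range}(P_\ast)=\Span\{{\bf v}_{\ast,\alpha}\}$ (Proposition~\ref{prop-proj-B}), and that $A_g{\bf v}_{\ast,\alpha}=kC_\ast{\bf v}_{\ast,\alpha}$, equivalently $(A_g-kC_\ast I)P_\ast=0$ (from (\ref{vector-evkC}) and (\ref{A-kC-ortho-P})). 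Together these say that the only obstruction to transferring a Jordan chain verbatim is the one-dimensional resonant direction $\Span\{{\bf v}_{\ast,\alpha}\}$, which lies inside the $kC_\ast$-eigenspace of $A_g$; this is exactly why the exceptional value in the statement is $\lambda=kC_\ast$.

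For the first direction, given ${\bf w}\in\ker((A_g-\lambda I)^{m_\lambda})\setminus\ker((A_g-\lambda I)^{m_\lambda-1})$ with $(I-P_\ast){\bf w}\neq0$, I would apply (\ref{Dg-I-P}) to obtain $(Dg({\bf x}_\ast)-\lambda I)^{N}(I-P_\ast){\bf w}=(I-P_\ast)(A_g-\lambda I)^{N}{\bf w}$ for all $N$. Kernel membership at level $m_\lambda$ is then immediate, and to pin down the exact level I would test when $(I-P_\ast)(A_g-\lambda I)^{m_\lambda-1}{\bf w}$ vanishes, i.e.\ when $(A_g-\lambda I)^{m_\lambda-1}{\bf w}\in\Span\{{\bf v}_{\ast,\alpha}\}$. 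For $\lambda\neq kC_\ast$ this cannot occur: writing $(A_g-\lambda I)^{m_\lambda-1}{\bf w}=c\,{\bf v}_{\ast,\alpha}$ and applying $A_g-\lambda I$ once more yields $c(kC_\ast-\lambda){\bf v}_{\ast,\alpha}=0$, forcing $c=0$ and contradicting minimality of $m_\lambda$, so the level is preserved. For $\lambda=kC_\ast$ precisely this vanishing becomes possible, and the dichotomy is the two outcomes according to whether $(A_g-kC_\ast I)^{m_\lambda-1}{\bf w}$ lies outside $\Span\{{\bf v}_{\ast,\alpha}\}$ (level $m_\lambda$) or inside it (one level lower), where the lower level is certified by observing that if the top term of the chain is a nonzero multiple of ${\bf v}_{\ast,\alpha}$ then the preceding term cannot also be such a multiple.

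For the converse I would push $(I-P_\ast){\bf w}_g$ through the same identity and then invoke (\ref{Dg-I-P-2}), using $A_g-kC_\ast I$ to convert the $Dg({\bf x}_\ast)$-hypothesis into an $A_g$-statement. From $(Dg({\bf x}_\ast)-\lambda_g I)^{m_{\lambda_g}}(I-P_\ast){\bf w}_g=0$ and (\ref{Dg-I-P}) I get $(A_g-\lambda_g I)^{m_{\lambda_g}}{\bf w}_g\in\Span\{{\bf v}_{\ast,\alpha}\}$, and multiplying by $A_g-kC_\ast I$ annihilates that resonant remainder since $(A_g-kC_\ast I){\bf v}_{\ast,\alpha}=0$; hence $(A_g-kC_\ast I){\bf w}_g\in\ker((A_g-\lambda_g I)^{m_{\lambda_g}})$. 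For $\lambda_g\neq kC_\ast$ the factor $A_g-kC_\ast I$ is invertible on the generalized $\lambda_g$-eigenspace and commutes with $A_g-\lambda_g I$, so it preserves the Jordan level, the exact level being forced by the non-vanishing hypothesis $(I-P_\ast)(A_g-\lambda_g I)^{m_{\lambda_g}-1}{\bf w}_g\neq0$ via the same contradiction device. For $\lambda_g=kC_\ast$ I would again split according to whether $(A_g-kC_\ast I)^{m_{\lambda_g}}{\bf w}_g$ is zero or a nonzero multiple of ${\bf v}_{\ast,\alpha}$, tracking how the extra factor $A_g-kC_\ast I$ interacts with the resonant chain to produce the stated alternative.

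The main obstacle I anticipate lies entirely in the resonant case $\lambda=kC_\ast$: there neither $I-P_\ast$ nor $A_g-kC_\ast I$ is an isomorphism between the relevant generalized eigenspaces, so chain lengths can shift, and the delicate point is to prove they shift by \emph{at most} one and to certify the exact index in each branch. This forces a careful chase of the chain $\{(A_g-kC_\ast I)^{j}{\bf w}\}_{j}$, repeatedly exploiting that $\Span\{{\bf v}_{\ast,\alpha}\}$ is simultaneously the kernel of $I-P_\ast$ and annihilated by $A_g-kC_\ast I$. Because this branch is exactly where an off-by-one in the exponents is easiest to commit, I would independently sanity-check the index bookkeeping on a single Jordan block of $A_g$ with ${\bf v}_{\ast,\alpha}$ sitting at its base before committing to the final indices.
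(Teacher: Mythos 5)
Your proposal is correct and takes essentially the same route as the paper's proof: both transfer Jordan chains through the intertwining identities (\ref{Dg-I-P}) and (\ref{Dg-I-P-2}) of Proposition \ref{prop-corr-Dg-Ag}, exploit that $\ker(I-P_\ast) = {\rm span}\{{\bf v}_{\ast,\alpha}\}$ together with $(A_g - kC_\ast I){\bf v}_{\ast,\alpha} = 0$, and resolve the exact Jordan level by the same contradiction device (a chain element lying in ${\rm span}\{{\bf v}_{\ast,\alpha}\}$ forces it to vanish when $\lambda \neq kC_\ast$), with the identical two-case split in the resonant case $\lambda = kC_\ast$. Your added remark that $A_g - kC_\ast I$ is invertible on the generalized $\lambda_g$-eigenspace is only a cosmetic rephrasing of the level-preservation step the paper derives from (\ref{Dg-I-P-2}), not a different method.
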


\begin{rem}
\label{rem-complex-evec}
If $\lambda \in {\rm Spec}(Dg({\bf x}_\ast)) \cap (\mathbb{C} \setminus \mathbb{R})$, the associated eigenvector ${\bf w}_g$ is also complex-valued.
Moreover, $(\bar \lambda, \overline{{\bf w}_g})$ is also an eigenpair of $Dg({\bf x}_\ast)$, which implies
\begin{equation*}
Dg({\bf x}_\ast) \begin{pmatrix}
{\bf w}_g & \overline{{\bf w}_g}
\end{pmatrix} = \begin{pmatrix}
{\bf w}_g & \overline{{\bf w}_g}
\end{pmatrix}\begin{pmatrix}
\lambda & 0 \\
0 & \bar \lambda
\end{pmatrix}.
\end{equation*}
Let $\lambda = \lambda_{\rm re} + i\lambda_{\rm im}$ with $\lambda_{\rm im} \not = 0$ and 
\begin{equation*}
Q = \begin{pmatrix}
1 & 1 \\
i & -i
\end{pmatrix}\quad \Leftrightarrow \quad Q^{-1} = \frac{1}{2}\begin{pmatrix}
1 & -i \\
1 & i
\end{pmatrix}.
\end{equation*}
Then we have
\begin{equation*}
Q\begin{pmatrix}
\lambda & 0 \\
0 & \bar \lambda
\end{pmatrix} = \begin{pmatrix}
\lambda_{\rm re} & \lambda_{\rm im} \\
-\lambda_{\rm im} & \lambda_{\rm re}
\end{pmatrix}Q
\end{equation*}
and hence
\begin{equation*}
Dg({\bf x}_\ast) \begin{pmatrix}
{\bf w}_g & \overline{{\bf w}_g}
\end{pmatrix}Q^{-1} = \begin{pmatrix}
{\bf w}_g & \overline{{\bf w}_g}
\end{pmatrix} Q^{-1} \begin{pmatrix}
\lambda_{\rm re} & \lambda_{\rm im} \\
-\lambda_{\rm im} & \lambda_{\rm re}
\end{pmatrix}, 
\end{equation*}
equivalently
\begin{equation*}
Dg({\bf x}_\ast) \begin{pmatrix}
{\rm Re}\,{\bf w}_g & {\rm Im}\,{\bf w}_g
\end{pmatrix} = \begin{pmatrix}
{\rm Re}\,{\bf w}_g & {\rm Im}\,{\bf w}_g
\end{pmatrix}  \begin{pmatrix}
\lambda_{\rm re} & \lambda_{\rm im} \\
-\lambda_{\rm im} & \lambda_{\rm re}
\end{pmatrix}.
\end{equation*}
Therefore ${\rm Re}\,{\bf w}_g$ and ${\rm Im}\,{\bf w}_g$ generate base vectors of \KMc{the} invariant subspace \KMc{$T_{{\bf x}_\ast} \mathcal{E}$.} 
\KMc{Indeed,} $-C_\ast$ is real and hence ${\bf v}_{\ast, \alpha}$ and ${\rm Re}\,{\bf w}_g$, ${\rm Im}\,{\bf w}_g$ are linearly independent.
As a consequence, a complex eigenvalue $\lambda \in {\rm Spec}(Dg({\bf x}_\ast))$ associates two independent vectors ${\bf w}_{gr}, {\bf w}_{gi} \in T_{{\bf x}_\ast} \mathcal{E}$ such that ${\bf w}_{gr} + i{\bf w}_{gi}$ is the eigenvector associated with $\lambda$, in which case all arguments in the proof are applied to ${\bf w}_{gr} + i{\bf w}_{gi}$.
\KMi{The similar observation holds for generalized eigenvectors with appropriate matrices realizing the above real form.} 
\end{rem}

\begin{proof}
First it follows from (\ref{Dg-I-P}) that, for any $\lambda \in \mathbb{C}$, any ${\bf w}\in \mathbb{C}^n$ and $N\in \KMi{\mathbb{N}}$, 
\begin{equation}
\label{Dg-I-PN}
(Dg({\bf x}_\ast) - \lambda I)^N (I -P_\ast){\bf w} = (I -P_\ast) (A_g - \lambda I)^N{\bf w}.
\end{equation}
1.
If ${\bf w}\in \mathbb{C}^n$ \KMc{is} such that $(I - P_\ast){\bf w} \not = 0$ and that
\begin{equation}
\label{w-eigen-Ag}
{\bf w} \in \ker((A_g - \lambda I)^{m_\lambda}) \setminus \ker((A_g - \lambda I)^{m_\lambda-1})
\end{equation}
with $\lambda \not = kC_\ast$ for some $m_\lambda \in \KMi{\mathbb{N}}$, we know from (\ref{Dg-I-PN}) that 
\begin{equation*}
(I-P_\ast){\bf w} \in \ker((Dg({\bf x}_\ast) - \lambda I)^{m_\lambda}) \setminus \ker((Dg({\bf x}_\ast) - \lambda I)^{m_\lambda-1}).
\end{equation*}
Here we have used the fact that $(A_g - \lambda I)^{m_\lambda - 1}{\bf w} \not \in {\rm span}\{{\bf v}_{\ast, \alpha}\}$, otherwise \KMc{$\bar {\bf w} \equiv (A_g - \lambda I)^{m_\lambda-1} {\bf w} = c{\bf v}_{\ast, \alpha}$ satisfies $(A_g - \lambda I)\bar {\bf w} = c(A_g - \lambda I){\bf v}_{\ast, \alpha} = 0$}. 
But the latter never occurs because $(A_g - kC_\ast I){\bf v}_{\ast,\alpha} = 0$ and $\lambda \not = kC_\ast$ is assumed at present.
\par
Now we move to the case (\ref{w-eigen-Ag}) with $\lambda = kC_\ast$.
Then there are two cases to be considered:
\begin{itemize}
\item $(A_g - kC_\ast I)^{m_\lambda - 1}{\bf w} \not \in {\rm span}\{{\bf v}_{\ast, \alpha}\}$.
\item $(A_g - kC_\ast I)^{m_\lambda - 2}{\bf w} \not \in {\rm span}\{{\bf v}_{\ast, \alpha}\}$ and $(A_g - kC_\ast  I)^{m_\lambda - 1}{\bf w} \in {\rm span}\{{\bf v}_{\ast, \alpha}\}$.
\end{itemize}
In the first case, the both sides (\ref{Dg-I-PN}) must vanish with $N=m_\lambda$, while do not vanish with $N=m_\lambda - 1$.
Under the assumption $(I-P_\ast){\bf w} \not = 0$, this property indicates that $(I-P_\ast){\bf w} \in \ker((Dg({\bf x}_\ast) - kC_\ast I)^{m_\lambda}) \setminus \ker((Dg({\bf x}_\ast) - kC_\ast I)^{m_\lambda-1})$.
In the second case, on the other hand, the both sides \KMb{in} (\ref{Dg-I-PN}) must vanish with $N=m_\lambda - 1$ from (\ref{vector-evkC}), while do not vanish with $N=m_\lambda - 2$.
Under the assumption $(I-P_\ast){\bf w} \not = 0$, this property indicates that $(I-P_\ast){\bf w} \in \ker((Dg({\bf x}_\ast) - kC_\ast I)^{m_\lambda-1}) \setminus \ker((Dg({\bf x}_\ast) - kC_\ast I)^{m_\lambda-2})$.

\par
\bigskip
\KMi{2.} 
\KMi{
Let ${\bf w}_g\in \mathbb{C}^n$ be such that 
$(I - P_\ast){\bf w}_g \not = 0$ and that
\begin{equation}
\label{wg-eigen-Dg}
(I - P_\ast){\bf w}_g \in \ker((Dg({\bf x}_\ast) - \lambda_g I)^{m_\lambda}) \setminus \ker((Dg({\bf x}_\ast) - \lambda_g I)^{m_\lambda-1})
\end{equation}
with $\lambda_g \not = kC_\ast$ for some $m_\lambda \in \KMi{\mathbb{N}}$.
Then (\ref{Dg-I-P}) indicates that either of the following properties holds:
\begin{itemize}
\item ${\bf w}_g \in \ker((A_g - \lambda_g I)^{m_\lambda})\setminus \ker((A_g - \lambda_g I)^{m_\lambda -1})$,
\item $(A_g - \lambda_g I)^{m_\lambda-1} {\bf w}_g \not \in {\rm span}\{{\bf v}_{\ast,\alpha}\}$, $(A_g - \lambda_g I)^{m_\lambda} {\bf w}_g \in {\rm span}\{{\bf v}_{\ast,\alpha}\}$.
\end{itemize}
In the latter case, the relation (\ref{vector-evkC}) yields $(A_g - kC_\ast I)(A_g - \lambda_g I)^{m_\lambda} {\bf w}_g = 0$.
From (\ref{Dg-I-P-2}), we concluded that
\begin{equation*}
(A_g - kC_\ast I){\bf w}_g \in \ker((A_g - \lambda_g I)^{m_\lambda})\setminus \ker((A_g - \lambda_g I)^{m_\lambda -1})
\end{equation*}
holds in both cases.
Notice that $(A_g - kC_\ast I){\bf w}_g \not = 0$ because $(I-P_\ast){\bf w}_g \in T_{{\bf x}_\ast}\mathcal{E}\setminus \{0\}$ and is assumed to satisfy (\ref{wg-eigen-Dg}) with $\lambda_g \not = kC_\ast$.
\par
Now we move to the case (\ref{wg-eigen-Dg}) with $\lambda_g = kC_\ast$.
Because $(I-P_\ast){\bf w}_g \in T_{{\bf x}_\ast}\mathcal{E} \setminus \{0\}$ is assumed, (\ref{Dg-I-P}) implies that there are two cases to be considered, similar to the first statement:
\begin{itemize}
\item $(A_g - kC_\ast I)^{m_{\lambda_g}}{\bf w}_g = 0$.
\item $(A_g - kC_\ast I)^{m_{\lambda_g}}{\bf w}_g \in {\rm span}\{{\bf v}_{\ast, \alpha}\}$.
\end{itemize}
Similar to the proof of the first statement, the identity (\ref{Dg-I-P-2}) yields that 
\begin{equation*}
(A_g - kC_\ast I){\bf w}_g \in \ker((A_g - \lambda_g I)^{N})\setminus \ker((A_g - \lambda_g I)^{N-1})
\end{equation*}
holds for either $N = m_{\lambda_g}$ or $m_{\lambda_g}+1$.
If $m_{\lambda_g} > 1$ and $(A_g - kC_\ast I)^{N}{\bf w}_g = 0$ for $1\leq N < m_{\lambda_g}$, then (\ref{Dg-I-P}) with $\lambda = kC_\ast$ implies $(Dg({\bf x}_\ast) - kC_\ast)^N(I-P_\ast){\bf w}_g = 0$, which contradicts the assumption.
}

\end{proof}


\KMf{We have unraveled the correspondence of eigenpairs between matrices $Dg({\bf x}_\ast)$ and $A_g$.}
Next consider the relationship of eigenpairs between matrices $A_g$ and $A$, \KMh{given in (\ref{Ag-QH}) and (\ref{blow-up-power-determining-matrix}), respectively.}

\par
\bigskip

\begin{prop}
\label{prop-correspondence-ev-Ag-A}
Let ${\bf x}_\ast\in \mathcal{E}$ be \KMf{an equilibrium on the horizon for $g$ and suppose that Assumption \ref{ass-f} holds}.
Also, let \KMf{$\lambda \in {\rm Spec}(A_g)$ and ${\bf u}\in \ker((A_g - \lambda I)^N)\setminus \ker((A_g - \lambda I)^{N-1})$ for some $N\in \mathbb{Z}_{\geq 1}$}, where ${\bf u}$ is linearly independent from ${\bf v}_{\ast, \alpha}$.
If
\begin{equation*}
\tilde \lambda:= r_{{\bf x}_\ast}^k \lambda,\quad {\bf U} := r_{{\bf x}_\ast}^{\Lambda_\alpha}{\bf u},
\end{equation*}
namely
\begin{equation*}
{\bf U} = (U_1,\ldots, U_n)^T,\quad U_i := r_{{\bf x}_\ast}^{\alpha_i}u_i,
\end{equation*}
then \KMf{$\tilde \lambda \in {\rm Spec}(A)$ and ${\bf U} \in \ker((A - \tilde \lambda I)^N)\setminus \ker((A - \tilde \lambda I)^{N-1})$}.
Conversely, \KMf{if $\tilde \lambda \in {\rm Spec}(A)$ and ${\bf U}\in \ker((A - \lambda I)^N)\setminus \ker((A - \lambda I)^{N-1})$ for some $N\in \mathbb{Z}_{\geq 1}$,
then the pair $\{\lambda, {\bf u}\}$ defined by
\begin{equation*}
\lambda:= r_{{\bf Y}_0}^{-k} \tilde \lambda,\quad {\bf u} := r_{{\bf Y}_0}^{-\Lambda_\alpha}{\bf U}
\end{equation*}
satisfy $\lambda \in {\rm Spec}(A_g)$ and ${\bf u} \in \ker((A_g - \KMg{\lambda} I)^N)\setminus \ker((A_g - \KMg{\lambda} I)^{N-1})$.
}
\end{prop}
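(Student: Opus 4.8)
The plan is to show that $A$ and $A_g$ are in fact \emph{similar up to a positive scalar}, namely that $A = r_{{\bf x}_\ast}^k\, S A_g S^{-1}$ where $S := r_{{\bf x}_\ast}^{\Lambda_\alpha} = \diag(r_{{\bf x}_\ast}^{\alpha_1},\ldots, r_{{\bf x}_\ast}^{\alpha_n})$. Once this clean relation is in hand, the entire statement (both directions, with exact matching of the filtration level $N$) reduces to elementary linear algebra, since conjugation by an invertible matrix followed by a spectral shift preserves all Jordan structure. In contrast to Theorem \ref{thm-evec-Dg}, where the rank-one term $B_g$ forced a case distinction at $\lambda = kC_\ast$ and allowed level shifts, here $A$ and $A_g$ are genuinely conjugate, so the correspondence will be uniform and case-free.

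First I would record the two matrices explicitly on the horizon. By (\ref{Ag-QH}) one has $A_g = -C_\ast \Lambda_\alpha + D\tilde f_{\alpha,k}({\bf x}_\ast)$; moreover the quasi-homogeneous part satisfies the functional identity $\tilde f_{\alpha,k} = f_{\alpha,k}$ (the $\kappa$-factors cancel for all ${\bf x}$, cf. (\ref{identity-f-horizon})), so $D\tilde f_{\alpha,k} = Df_{\alpha,k}$ and thus $A_g = -C_\ast \Lambda_\alpha + Df_{\alpha,k}({\bf x}_\ast)$, while $A = -\tfrac{1}{k}\Lambda_\alpha + Df_{\alpha,k}({\bf Y}_0)$ by (\ref{blow-up-power-determining-matrix}). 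The key computation is to differentiate the quasi-homogeneity relation $f_{i;\alpha,k}(s^{\Lambda_\alpha}{\bf x}) = s^{k+\alpha_i} f_{i;\alpha,k}({\bf x})$ in $x_j$, which yields entrywise $\tfrac{\partial f_{i;\alpha,k}}{\partial x_j}(s^{\Lambda_\alpha}{\bf x}) = s^{k+\alpha_i-\alpha_j}\tfrac{\partial f_{i;\alpha,k}}{\partial x_j}({\bf x})$, i.e. in matrix form
\begin{equation*}
Df_{\alpha,k}(s^{\Lambda_\alpha}{\bf x}) = s^k\, s^{\Lambda_\alpha} Df_{\alpha,k}({\bf x})\, s^{-\Lambda_\alpha}.
\end{equation*}

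Next I would specialize this at ${\bf Y}_0 = S{\bf x}_\ast$ (with $s = r_{{\bf x}_\ast}$), obtaining $Df_{\alpha,k}({\bf Y}_0) = r_{{\bf x}_\ast}^k\, S\, Df_{\alpha,k}({\bf x}_\ast)\, S^{-1}$. Since $S$ and $\Lambda_\alpha$ are diagonal they commute, so $S\Lambda_\alpha S^{-1}=\Lambda_\alpha$; combining this with the balance relation (\ref{balance-C01}) in the form $C_\ast = (k r_{{\bf x}_\ast}^k)^{-1}$, a direct computation gives
\begin{equation*}
S A_g S^{-1} = -C_\ast \Lambda_\alpha + r_{{\bf x}_\ast}^{-k} Df_{\alpha,k}({\bf Y}_0) = r_{{\bf x}_\ast}^{-k}\Big( -\tfrac{1}{k}\Lambda_\alpha + Df_{\alpha,k}({\bf Y}_0)\Big) = r_{{\bf x}_\ast}^{-k} A,
\end{equation*}
which is the asserted relation. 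From $(A - \tilde\lambda I)^N = r_{{\bf x}_\ast}^{kN} S (A_g - \lambda I)^N S^{-1}$ with $\tilde\lambda = r_{{\bf x}_\ast}^k\lambda$, invertibility of $S$ shows that ${\bf U} = S{\bf u}$ lies in $\ker((A-\tilde\lambda I)^N)\setminus\ker((A-\tilde\lambda I)^{N-1})$ exactly when ${\bf u}$ lies in $\ker((A_g-\lambda I)^N)\setminus\ker((A_g-\lambda I)^{N-1})$, with the same $N$. This is also consistent with the special pair: $S{\bf v}_{\ast,\alpha}=\Lambda_\alpha {\bf Y}_0 = {\bf v}_{0,\alpha}$ and $r_{{\bf x}_\ast}^k\cdot kC_\ast = 1$, matching Theorem \ref{thm-ev1}, so the hypothesis that ${\bf u}$ be independent of ${\bf v}_{\ast,\alpha}$ merely isolates the eigenstructure complementary to the blow-up direction and is not needed for the conjugation itself.

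The converse is symmetric, and I expect the only genuinely delicate point to be the scaling bookkeeping: the converse uses $r_{{\bf Y}_0}=p({\bf Y}_0)$ in place of $r_{{\bf x}_\ast}$. Since $p$ satisfies $p(s^{\Lambda_\alpha}{\bf y}) = s\,p({\bf y})$ (from $\alpha_i\beta_i=c$) and $p({\bf x}_\ast)=1$, one computes $r_{{\bf Y}_0}=p(r_{{\bf x}_\ast}^{\Lambda_\alpha}{\bf x}_\ast)=r_{{\bf x}_\ast}$, so the two scalings coincide and the maps $\{\lambda,{\bf u}\}\leftrightarrow\{\tilde\lambda,{\bf U}\}$ are mutually inverse. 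Rewriting the established relation as $A_g = r_{{\bf x}_\ast}^{-k} S^{-1} A S$ and repeating the kernel computation then yields the converse verbatim. The main obstacle is therefore not the linear algebra but correctly establishing the Jacobian conjugation law together with the identification $D\tilde f_{\alpha,k}=Df_{\alpha,k}$, and verifying $r_{{\bf Y}_0}=r_{{\bf x}_\ast}$; after that the full correspondence of generalized eigenspaces is an immediate, case-free consequence of the similarity.
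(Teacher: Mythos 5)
Your proposal is correct and takes essentially the same approach as the paper: the paper's proof likewise reduces to the quasi-homogeneous part (via Assumption \ref{ass-f} and the identification of $\tilde f_{\alpha,k}$ with $f_{\alpha,k}$), differentiates the quasi-homogeneity identity to obtain $D_{\bf Y}f({\bf Y}_0) = r_{{\bf x}_\ast}^{kI+\Lambda_\alpha} D_{\bf x}\tilde f({\bf x}_\ast)\, r_{{\bf x}_\ast}^{-\Lambda_\alpha}$, and concludes the scaled similarity $A = r_{{\bf x}_\ast}^{kI+\Lambda_\alpha} A_g\, r_{{\bf x}_\ast}^{-\Lambda_\alpha}$ together with $(A-\tilde\lambda I)^N = r_{{\bf x}_\ast}^{kNI+\Lambda_\alpha}(A_g-\lambda I)^N r_{{\bf x}_\ast}^{-\Lambda_\alpha}$, which is precisely your relation $A = r_{{\bf x}_\ast}^k\, S A_g S^{-1}$ and its consequence for the Jordan filtration, with the same use of $r_{{\bf Y}_0}=r_{{\bf x}_\ast}$ for the converse.
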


\begin{proof}
\KMe{Similar to} arguments in the proof of Theorem \ref{thm-ev-special}, it is sufficient to consider the case that $f({\bf y})$, equivalently $\tilde f({\bf x})$, is quasi-homogeneous.
\KMi{That is, $f({\bf y}) = f_{\alpha, k}({\bf y})$ and $\tilde f({\bf x}) = \tilde f_{\alpha, k}({\bf x})$,} which are assumed in the following arguments.
Recall that \KMf{an equilibrium on the horizon ${\bf x}_\ast$ with $C_\ast > 0$} and the corresponding root ${\bf Y}_0$ of the balance law satisfy
\begin{align}
\label{identity-balance-equilibrium}
&\KMh{{\bf x}_\ast = r_{{\bf Y}_0}^{-\Lambda_\alpha} {\bf Y}_0,\quad {\bf Y}_0 = r_{{\bf x}_\ast}^{\Lambda_\alpha}{\bf x}_\ast},\\
\notag
&r_{{\bf Y}_0} = p({\bf Y}_0) = r_{{\bf x}_\ast} \equiv (kC_\ast)^{-1/k} > 0\KMf{.}
\end{align}
Similar to arguments in Lemma \ref{lem-identity-QHvf}, we have
\begin{equation}
\label{identity-QH-diff}
s^{\alpha_l}\frac{\partial f_i}{\partial x_l}( \KMh{ s^{\Lambda_\alpha}{\bf x}} ) = s^{k + \alpha_i}\frac{\partial f_i}{\partial x_l}( \KMh{{\bf x}} ), 
\end{equation}
while the left-hand side coincides with
\begin{equation*}
s^{\alpha_l}\frac{\partial f_i}{\partial x_l}( \KMh{ s^{\Lambda_\alpha}{\bf x}} ) = \frac{\partial f_i}{\partial (s^{\alpha_l} x_l)}( \KMh{ s^{\Lambda_\alpha}{\bf x}} )\frac{\partial (s^{\alpha_l} x_l)}{\partial \KMh{x_l}} \equiv \frac{\partial f_i}{\partial X_l}( \KMh{{\bf X}} )\frac{\partial (s^{\alpha_l} x_l)}{\partial x_l},
\end{equation*}
introducing an auxiliary variable ${\bf X} = (X_1, \ldots, X_n)^T$, $X_i := s^{\alpha_i} x_i$ for some $s > 0$.
\KMi{Let $D_{\bf X}$ be the derivative with respect to the vector variable ${\bf X}$.}
Note that $D_{\bf X}\KMi{\tilde f}({\bf X})|_{{\bf X} = \bar {\bf x}} = D_{\bf x} \KMi{\tilde f}(\bar {\bf x})$ when \KMg{the variable ${\bf X}$ is set as ${\bf x}$} and that $D_{\bf X}\KMi{f}({\bf X})|_{{\bf X} = \bar {\bf Y}} = D_{\bf Y} \KMi{f}(\bar {\bf Y})$ when \KMg{the variable ${\bf X}$ is set as ${\bf Y}$}.
Using the fact that $\KMi{f}({\bf Y})$ and $\KMi{\tilde f}({\bf x})$ have the identical form, we have
\begin{align*}
\KMh{
D_{\bf Y}f({\bf Y}_0) r_{{\bf x}_\ast}^{\Lambda_\alpha} = r_{{\bf x}_\ast}^{kI + \Lambda_\alpha} D_{\bf x}\KMi{f}({\bf x}_\ast)
}
\end{align*}
\KMh{with} $s = r_{{\bf x}_\ast}$ and ${\bf x} = {\bf x}_\ast$ in (\ref{identity-QH-diff}) and the identity (\ref{identity-balance-equilibrium}).
That is,
\begin{equation}
\label{identity-QH-diff-2-matrix}
D_{\bf Y}\KMi{f}({\bf Y}_0) = \KMh{ r_{{\bf x}_\ast}^{kI + \Lambda_{\alpha}} } D_{\bf x}\KMi{\tilde f}({\bf x}_\ast) r_{{\bf x}_\ast}^{-\Lambda_{\alpha}}.
\end{equation}
Then we have
\begin{align*}
A &= -\frac{1}{k} \Lambda_\alpha + D_{{\bf Y}} f({\bf Y}_0)\quad \text{(from (\ref{blow-up-power-determining-matrix}))}\\
	&= -r_{{\bf x}_\ast}^k C_\ast \Lambda_\alpha + D_{{\bf Y}} f({\bf Y}_0)\quad \text{(from (\ref{identity-balance-equilibrium}))}\\
	&= -r_{{\bf x}_\ast}^k C_\ast \Lambda_\alpha +  \KMh{ r_{{\bf x}_\ast}^{kI + \Lambda_{\alpha}}} D_{\bf x}\KMi{\tilde f}({\bf x}_\ast) r_{{\bf x}_\ast}^{-\Lambda_{\alpha}}\quad \text{(from (\ref{identity-QH-diff-2-matrix}))}\\
	&= \KMh{ r_{{\bf x}_\ast}^{kI + \Lambda_{\alpha}} } \left( -C_\ast \Lambda_\alpha +  D_{\bf x}\KMi{\tilde f}({\bf x}_\ast) \right)r_{{\bf x}_\ast}^{-\Lambda_{\alpha}}\\
	&\KMh{= r_{{\bf x}_\ast}^{kI + \Lambda_{\alpha}} A_g r_{{\bf x}_\ast}^{-\Lambda_{\alpha}} \quad \text{(from (\ref{Ag-QH}))}}
\end{align*}
and hence
\begin{equation}
\label{conj-A-Ag}
A = \KMh{ r_{{\bf x}_\ast}^{kI+\Lambda_{\alpha}}} A_g r_{{\bf x}_\ast}^{-\Lambda_{\alpha}} \quad \Leftrightarrow \quad 
A_g = \KMh{ r_{{\bf Y}_0}^{-(kI + \Lambda_{\alpha})}} A r_{{\bf Y}_0}^{\Lambda_{\alpha}},
\end{equation}
where we have used $r_{{\bf Y}_0} = r_{{\bf x}_\ast}$.
In particular, for any $\lambda\in \mathbb{C}$ and $N\in \KMi{\mathbb{N}}$ with the identity $\tilde \lambda =  r_{{\bf x}_\ast}^k \lambda$, we have
\begin{equation}
\label{conj-A-Ag-2}
(A - \tilde \lambda I)^N = \KMh{ r_{{\bf x}_\ast}^{kN I + \Lambda_{\alpha}}} (A_g - \lambda I)^N r_{{\bf x}_\ast}^{-\Lambda_{\alpha}}\quad \Leftrightarrow \quad
(A_g - \lambda I)^N = \KMh{ r_{{\bf Y}_0}^{-(kN I +\Lambda_{\alpha})}} (A - \tilde \lambda I)^N r_{{\bf Y}_0}^{\Lambda_{\alpha}}.
\end{equation}
This identity directly yields our statements.
For example, let ${\bf u} = (u_1,\ldots, u_n)$ be an eigenvector of $A_g$ associated with an eigenvalue $\lambda$:  
$A_g {\bf u} = \lambda {\bf u}$.
Then (\ref{conj-A-Ag}) yields
\begin{align*}
 \lambda {\bf u} &= A_g {\bf u}
 	= \KMh{ r_{{\bf Y}_0}^{-(k I + \Lambda_{\alpha})}} A r_{{\bf Y}_0}^{\Lambda_{\alpha}} {\bf u}
 	= \KMh{ r_{{\bf Y}_0}^{-(k I + \Lambda_{\alpha})}} A {\bf U},
\end{align*}
and hence
\begin{equation*}
A{\bf U} = r_{{\bf Y}_0}^{-k}\lambda {\bf U} = \tilde \lambda {\bf U}.
\end{equation*}
Repeating the same argument conversely assuming the eigenstructure $A{\bf U} = \tilde \lambda {\bf u}$, we know that an eigenpair $(\lambda, {\bf u})$ of $A_g$ is constructed from a given eigenpair $(\tilde \lambda, {\bf U})$ of $A$ through (\ref{identity-balance-equilibrium}) and (\ref{conj-A-Ag}).
Correspondence of generalized eigenvectors follows from the similar arguments through (\ref{conj-A-Ag-2}).
\end{proof}

\subsection{Complete correspondence of eigenstructures and another blow-up criterion}
\label{section-parameter-dep-Y}

Our results here determine the complete correspondence of eigenpairs among $A$ and $Dg({\bf x}_\ast)$.
In particular, blow-up power eigenvalues determining powers of $\theta(t)$ in asymptotic expansions of blow-up solutions are completely determined by ${\rm Spec}(Dg({\bf x}_\ast))$\KMf{, and vice versa}.
The complete correspondence of eigenstructures is obtained under a mild assumption of the corresponding matrices.

\begin{thm}
\label{thm-blow-up-estr}
Let ${\bf x}_\ast\in \mathcal{E}$ be \KMf{an equilibrium on the horizon for $g$ which is mapped to a nonzero root ${\bf Y}_0$ of the balance law (\ref{0-balance}) through (\ref{x-to-C}) and (\ref{C-to-x}), and suppose that Assumption \ref{ass-f} holds}.
\KMf{When all the eigenpairs of the blow-up power-determining matrix $A$ associated with ${\bf Y}_0$
 are determined, then all the eigenpairs of $Dg({\bf x}_\ast)$ are constructed through the correspondence listed in Table \ref{table-eigen1}.
Similarly, if all the eigenpairs of $Dg({\bf x}_\ast)$ are determined, then all the eigenpairs of $A$ are constructed through the correspondence listed in Table \ref{table-eigen2}.}
\par
\KMi{Moreover, the Jordan structure associated with eigenvalues, namely the number of Jordan blocks and their size, are identical except $kC_\ast \in {\rm Spec}(Dg({\bf x}_\ast))$ if exists, and $1\in {\rm Spec}(A)$.}
\end{thm}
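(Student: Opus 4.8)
The plan is to collapse the two-directional claim into a single structural comparison by chaining the correspondences already in hand, and then to read off the Jordan data from two explicit block forms. For a square matrix $M$ and $\nu \in {\rm Spec}(M)$ write $J_M(\nu)$ for the multiset of sizes of its Jordan blocks at $\nu$. First I would eliminate the passage between $A$ and $A_g$: by \eqref{conj-A-Ag}, $A = r_{{\bf x}_\ast}^{kI+\Lambda_\alpha} A_g r_{{\bf x}_\ast}^{-\Lambda_\alpha} = r_{{\bf x}_\ast}^k\bigl(r_{{\bf x}_\ast}^{\Lambda_\alpha} A_g r_{{\bf x}_\ast}^{-\Lambda_\alpha}\bigr)$, so $A$ is a positive scalar multiple ($r_{{\bf x}_\ast}^k > 0$) of a matrix similar to $A_g$. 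Conjugation and nonzero scaling leave the number and sizes of Jordan blocks untouched and merely rescale eigenvalues by $r_{{\bf x}_\ast}^k$; hence $J_A(r_{{\bf x}_\ast}^k\lambda) = J_{A_g}(\lambda)$ for all $\lambda$, and in particular $1 \in {\rm Spec}(A)$ sits over $kC_\ast \in {\rm Spec}(A_g)$ since $r_{{\bf x}_\ast}^k kC_\ast = 1$. It then suffices to compare $A_g$ with $Dg({\bf x}_\ast)$, the flagged exceptions $1 \in {\rm Spec}(A)$ and $kC_\ast \in {\rm Spec}(Dg({\bf x}_\ast))$ being the images of $kC_\ast \in {\rm Spec}(A_g)$ and $kC_\ast \in {\rm Spec}(Dg({\bf x}_\ast))$.

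Next I would write both matrices in block form relative to $\mathbb{R}^n = V \oplus L$, where $V := T_{{\bf x}_\ast}\mathcal{E}$ and $L := {\rm span}\{{\bf v}_{\ast,\alpha}\}$, legitimate because ${\bf v}_{\ast,\alpha}$ is transversal to $\mathcal{E}$ by \eqref{inner-gradp-v}. Put $S := (I - P_\ast)A_g|_V$, an endomorphism of $V$. By the invariance of the horizon (Remark \ref{rem-invariance}) and Theorem \ref{thm-ev-special} both $V$ and $L$ are $Dg({\bf x}_\ast)$-invariant, and \eqref{Dg-proj} gives $Dg({\bf x}_\ast)|_V = S$, $Dg({\bf x}_\ast)|_L = -C_\ast$, i.e. the \emph{block-diagonal} form $Dg({\bf x}_\ast) = \begin{pmatrix} S & 0 \\ 0 & -C_\ast \end{pmatrix}$. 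By contrast \eqref{vector-evkC} shows $L$ is $A_g$-invariant with $A_g|_L = kC_\ast$, while $V$ need not be $A_g$-invariant; the intertwining relation $Dg({\bf x}_\ast)(I - P_\ast) = (I - P_\ast)A_g$ from Proposition \ref{prop-corr-Dg-Ag} (see \eqref{Dg-I-P}) then forces the merely \emph{block lower-triangular} form $A_g = \begin{pmatrix} S & 0 \\ B & kC_\ast \end{pmatrix}$ with coupling $B := P_\ast A_g|_V$. This triangular-versus-diagonal asymmetry is the crux of the statement.

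With these forms the conclusion is read off, leaving one genuinely delicate point. For any $\nu \notin \{kC_\ast, -C_\ast\}$ the value $\nu$ meets only the $(1,1)$ block in each matrix, so $J_{A_g}(\nu) = J_S(\nu) = J_{Dg({\bf x}_\ast)}(\nu)$, and combining with the first paragraph the Jordan data agree for every eigenvalue of $A$ lying over such $\nu$. At $\nu = -C_\ast$ (distinct from $kC_\ast$ since $C_\ast > 0$) block-diagonality of $Dg({\bf x}_\ast)$ gives the clean splitting $J_{Dg({\bf x}_\ast)}(-C_\ast) = J_S(-C_\ast) \sqcup \{1\}$, whose appended $1\times 1$ block is exactly the blow-up direction ${\bf v}_{\ast,\alpha}$, matched to ${\bf v}_{0,\alpha}$ of $A$ through Theorems \ref{thm-ev1} and \ref{thm-ev-special}; since no block fusion is possible for a diagonal block, $-C_\ast$ contributes no discrepancy and need not be excepted. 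The one place the forms diverge is the shared value $kC_\ast$: here the off-diagonal $B$ can fuse the appended $L$-block with a Jordan chain of $S$ at $kC_\ast$, so that $J_{A_g}(kC_\ast) = J_A(1)$ may strictly exceed the clean $J_{Dg({\bf x}_\ast)}(kC_\ast) = J_S(kC_\ast)$ in chain length — the announced exception, genuine precisely when $kC_\ast \in {\rm Spec}(S)$.

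I expect this fusion analysis at $\nu = kC_\ast$ to be the main obstacle, and I would not redo it from scratch: it is exactly the dichotomy already proved in Theorem \ref{thm-evec-Dg}, where a chain for $\lambda = kC_\ast$ either persists or shifts by one under $(I - P_\ast)$. Concretely I would invoke \eqref{A-kC-ortho-P}, i.e. $(A_g - kC_\ast I){\bf v}_{\ast,\alpha} = 0$, to decide which Jordan chains of $S$ at $kC_\ast$ are lengthened by the coupling $B$ and which remain, and to confirm that at every eigenvalue other than $kC_\ast$ the map $(I - P_\ast)$ transports whole Jordan chains isomorphically, so that block counts and sizes are preserved there exactly.
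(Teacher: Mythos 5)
Your proposal is correct, and it reaches the theorem by a genuinely different organization of the key step. The paper's own proof is an assembly of its prior results: the common eigenpair comes from Theorems \ref{thm-ev1}, \ref{thm-ev-special} and \ref{thm-balance-1to1}, the remaining eigenpairs and the Jordan claim come from Proposition \ref{prop-correspondence-ev-Ag-A} together with Theorem \ref{thm-evec-Dg}, whose proof transports individual generalized eigenvectors through the intertwining operators $(I-P_\ast)$ and $(A_g-kC_\ast I)$ with a case analysis on when a chain hits ${\rm span}\{{\bf v}_{\ast,\alpha}\}$. You handle the $A$ versus $A_g$ passage exactly as the paper does (the conjugation-plus-scaling identity \eqref{conj-A-Ag}), but for the $A_g$ versus $Dg({\bf x}_\ast)$ comparison you replace the eigenvector-transport argument by writing both matrices in block form relative to $T_{{\bf x}_\ast}\mathcal{E}\oplus{\rm span}\{{\bf v}_{\ast,\alpha}\}$: $Dg({\bf x}_\ast)$ is block-diagonal with blocks $S$ and $-C_\ast$ (this follows from \eqref{Dg-proj}, the invariance of the horizon, and Theorem \ref{thm-ev-special}), while $A_g$ is block lower-triangular with the \emph{same} $(1,1)$ block $S$ and corner entry $kC_\ast$ (from \eqref{vector-evkC} and \eqref{Dg-I-P}). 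Standard Jordan theory of block-triangular matrices then localizes every possible discrepancy to the shared corner value $kC_\ast$, which is precisely the announced exception, and shows the appended $-C_\ast$ block of $Dg({\bf x}_\ast)$ is never fused, matching the common eigenvector of $A$. What each route buys: the paper's vector-by-vector transport produces the explicit (generalized) eigenvector formulas that populate Tables \ref{table-eigen1} and \ref{table-eigen2}, which your block argument gives only implicitly (the eigenvector correspondence ${\bf w}\mapsto(I-P_\ast){\bf w}$ and the corner coupling must still be unwound to recover them); conversely, your diagonal-versus-triangular picture makes the global Jordan-structure statement and the exceptional role of the pair $\{1, kC_\ast\}$ transparent at a glance, where the paper's reader must reassemble it from the case analyses in Theorem \ref{thm-evec-Dg}. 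Note also that you invoke Theorem \ref{thm-evec-Dg} only for the fusion dichotomy at $kC_\ast$, which is not strictly needed: since the theorem excepts that eigenvalue, it suffices that a discrepancy \emph{can} occur there, and your block form already shows agreement everywhere else.
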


\begin{proof}
\KMf{Correspondences} between {\em Common eigenvalue} and {\em Common eigenvector} \KMf{in Tables follow} from Theorems \ref{thm-ev1} and \ref{thm-ev-special}, \KMb{and \ref{thm-balance-1to1},} \KMf{while correspondences} between {\em Remaining eigenvalue} and {\em Remaining (generalized) eigenvector} \KMf{in Tables follow} from \KMg{Proposition \ref{prop-correspondence-ev-Ag-A} and Theorem \ref{thm-evec-Dg}}.
If $kC_\ast \not \in {\rm Spec}(Dg({\bf x}))$ (in particular, $1\in {\rm Spec}(A)$ is simple from Theorem \ref{thm-ev1} and Proposition \ref{prop-correspondence-ev-Ag-A}), the number and size of Jordan blocks are identical by Proposition \ref{prop-correspondence-ev-Ag-A} and Theorem \ref{thm-evec-Dg}.
\end{proof}

\begin{table}[ht]\em
\centering
{
\begin{tabular}{cccc}
\hline
  & $A$ & $Dg({\bf x}_\ast)$\\
\hline\\[-2mm]
Common eigenvalue & $1$ & $-C_\ast$\\ [1mm]
Common eigenvector & $\KMf{\Lambda_\alpha {\bf Y}_0}$ & \KMg{$\Lambda_\alpha r_{{\bf Y}_0}^{-\Lambda_\alpha} {\bf Y}_0$} \\  [1mm]
Remaining eigenvalue & $\tilde \lambda$ & $\lambda = r_{{\bf Y}_0}^{-k}\tilde \lambda$ \\  [1mm]
Remaining (generalized) eigenvector & \KMg{${\bf U}$} & \KMg{$(I-P_\ast) r_{{\bf Y}_0}^{-\Lambda_\alpha} {\bf U}$} \\  [1mm]
\hline 
\end{tabular}%
}
\caption{Correspondence of eigenstructures from $A$ to $Dg({\bf x}_\ast)$}
\flushleft
The constant $r_{{\bf Y}_0}$ is $p({\bf Y}_0)$. 
Once a nonzero root ${\bf Y}_0$ of the balance law and eigenpairs of $A$ are given, corresponding equilibrium on the horizon ${\bf x}_\ast$ and all eigenpairs of $Dg({\bf x}_\ast)$ are constructed by the rule on the table.
\label{table-eigen1}
\end{table}%

\begin{table}[ht]\em
\centering
{
\begin{tabular}{cccc}
\hline
  & $Dg({\bf x}_\ast)$  & $A$\\
\hline\\[-2mm]
Common eigenvalue & $-C_\ast$  & $1$ \\ [1mm]
Common eigenvector & $\KMf{\Lambda_\alpha {\bf x}_\ast}$ & \KMg{$\Lambda_\alpha r_{{\bf x}_\ast}^{\Lambda_\alpha} {\bf x}_\ast$} \\  [1mm]
Remaining eigenvalue & $\lambda$ & $\tilde \lambda = r_{{\bf x}_\ast}^k\lambda$ \\  [1mm]
Remaining (generalized) eigenvector & $(I-P_\ast){\bf u}$ & $r_{{\bf x}_\ast}^{\Lambda_\alpha}\KMi{(A_g - kC_\ast I)}{\bf u}$ \\  [1mm]
\hline 
\end{tabular}%
}
\caption{Correspondence of eigenstructures from $Dg({\bf x}_\ast)$ to $A$}
\flushleft
The constant $r_{{\bf x}_\ast}$ is $(kC_\ast)^{-1/k}$, which is positive whenever ${\bf x}_\ast$ is hyperbolic by Corollary \ref{cor-Cast-pos}.
Once \KMb{an} equilibrium on the horizon ${\bf x}_\ast$ and eigenpairs of $Dg({\bf x}_\ast)$  are given, corresponding (nonzero) root of the balance law ${\bf Y}_0$ and all eigenpairs of $A$ are constructed by the rule on the table.
\label{table-eigen2}
\end{table}%

The correspondence of {\em Common eigenvector}, and structure of $\mathcal{E}$ (and Remark \ref{rem-zero-comp-compactification} if necessary) yield that $\Lambda_\ast {\bf Y}_0$ is not a zero-vector.
As a \KMi{byproduct} of the above correspondence, another criterion of blow-ups is provided.
We have already reviewed in Section \ref{section-preliminary} that hyperbolic equilibria on the horizon for $g$ provide blow-up solutions.
On the contrary, the following result provides a criterion of the existence of blow-ups which can be applied {\em without a knowledge of desingularized vector fields}, while the correspondence to dynamics at infinity through $g$ is indirectly used.

\begin{thm}[Criterion of existence of blow-up from asymptotic expansions]
\label{thm-existence-blow-up}
\KMi{Let ${\bf Y}_0$ be a nonzero root of the balance law (\ref{0-balance}).}
Assume that the corresponding blow-up power determining matrix $A$ associated with ${\bf Y}_0$ is hyperbolic: ${\rm Spec}(A) \cap i\mathbb{R}=\emptyset$, \KMi{and that Assumption \ref{ass-f} holds}.
Then \KMf{(\ref{ODE-original}) possesses a blow-up solution ${\bf y}(t)$ with the asymptotic behavior $y_i(t) \sim Y_{0,i}\theta(t)^{-\alpha_i/k}$ as $t\to t_{\max} < \infty$, provided $Y_{0,i} \not = 0$.}
\end{thm}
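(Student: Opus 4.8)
The plan is to convert the assumed hyperbolicity of the blow-up power-determining matrix $A$ into hyperbolicity of the Jacobian $Dg({\bf x}_\ast)$ at the corresponding equilibrium on the horizon, and then to invoke the stationary blow-up criterion, Theorem \ref{thm:blowup}. First I would apply Theorem \ref{thm-balance-1to1} to the given nonzero root ${\bf Y}_0$ to produce the equilibrium ${\bf x}_\ast \in \mathcal{E}$ for $g$ through (\ref{C-to-x}). Assumption \ref{ass-f} supplies $C_\ast > 0$, so that $r_{{\bf Y}_0} = r_{{\bf x}_\ast} = (kC_\ast)^{-1/k} > 0$ is a well-defined positive number and all hypotheses of the complete eigenstructure correspondence (Theorem \ref{thm-blow-up-estr}) are in force.

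Next I would read off ${\rm Spec}(Dg({\bf x}_\ast))$ from Table \ref{table-eigen2}. Its common eigenvalue is $-C_\ast$, while each remaining eigenvalue has the form $\lambda = r_{{\bf Y}_0}^{-k}\tilde\lambda$, where $\tilde\lambda$ ranges over the remaining spectrum of $A$. The decisive observation is that $r_{{\bf Y}_0}^{-k}$ is a positive real scalar, whence ${\rm Re}\,\lambda = r_{{\bf Y}_0}^{-k}\,{\rm Re}\,\tilde\lambda$ and therefore $\lambda \in i\mathbb{R}$ if and only if $\tilde\lambda \in i\mathbb{R}$. Since $A$ is hyperbolic, no $\tilde\lambda$ lies on $i\mathbb{R}$, so none of the remaining $\lambda$ does either; and the common eigenvalue $-C_\ast < 0$ is plainly off $i\mathbb{R}$. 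Hence ${\rm Spec}(Dg({\bf x}_\ast)) \cap i\mathbb{R} = \emptyset$, i.e. ${\bf x}_\ast$ is a hyperbolic equilibrium.

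It then remains to verify that the hypothesis of Theorem \ref{thm:blowup} is attainable, namely that $W^s_{\rm loc}({\bf x}_\ast;g)$ contains the $T$-image of a bounded point in $\mathbb{R}^n$. Here I would use that the transversal eigenvalue $-C_\ast$ is \emph{negative}, so the stable index satisfies $n_s \geq 1$ and the stable subspace contains ${\bf v}_{\ast,\alpha} = \Lambda_\alpha{\bf x}_\ast$. By (\ref{inner-gradp-v}) this direction is transversal to $T_{{\bf x}_\ast}\mathcal{E}$, so $W^s_{\rm loc}({\bf x}_\ast;g)$ crosses the horizon transversally; inspecting the sign along the local trajectory ${\bf x}(\tau) \approx {\bf x}_\ast + c\,e^{-C_\ast \tau}{\bf v}_{\ast,\alpha}$, for which $p({\bf x}(\tau)) \approx 1 + c\,e^{-C_\ast\tau}$, shows that the branch with $c < 0$ lies in $\mathcal{D} = \{p < 1\}$. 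Since $W^s_{\rm loc}$ is taken within $\overline{\mathcal{D}}$, this branch meets the open set $\mathcal{D}$, and each such point is $T({\bf y}(0))$ for a bounded ${\bf y}(0)\in\mathbb{R}^n$. Applying Theorem \ref{thm:blowup} then yields a blow-up solution with $\kappa \sim r_{{\bf x}_\ast}\theta(t)^{-1/k}$; since ${\bf x}(t)\to{\bf x}_\ast$ and $y_i = \kappa^{\alpha_i}x_i$, this gives $y_i(t) \sim r_{{\bf x}_\ast}^{\alpha_i}x_{\ast,i}\theta(t)^{-\alpha_i/k} = Y_{0,i}\theta(t)^{-\alpha_i/k}$ by (\ref{x-to-C}), whenever $x_{\ast,i}\neq 0$, equivalently $Y_{0,i}\neq 0$.

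I expect the main obstacle to be precisely the nontriviality and location of the stable set: hyperbolicity of $Dg({\bf x}_\ast)$ alone does not feed into Theorem \ref{thm:blowup} unless the stable manifold actually reaches finite points of $\mathcal{D}$, and this hinges on the transversal eigenvalue being the \emph{stable} value $-C_\ast$ with $C_\ast>0$, furnished by Assumption \ref{ass-f}. The sign-preservation argument for hyperbolicity is routine, and the identification of the leading constant $\tilde c = r_{{\bf x}_\ast}$ in Theorem \ref{thm:blowup} — obtained by integrating Lemma \ref{lem-G}, which gives $\kappa \sim e^{C_\ast\tau}$, against the time-scale desingularization (\ref{time-desing-para}) — is the only genuinely computational ingredient and can be cited from the preceding development.
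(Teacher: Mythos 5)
Your proposal is correct and follows essentially the same route as the paper's proof: transfer hyperbolicity of $A$ to $Dg({\bf x}_\ast)$ via the correspondence of Theorem \ref{thm-blow-up-estr} (Table \ref{table-eigen2}), using that the remaining eigenvalues differ by the positive real factor $r_{{\bf Y}_0}^{-k}$ so real parts keep their signs, then use $-C_\ast<0$ with the transversal eigenvector ${\bf v}_{\ast,\alpha}$ to get $W^s_{\rm loc}({\bf x}_\ast;g)\cap\mathcal{D}\neq\emptyset$, invoke Theorem \ref{thm:blowup}, and pull the asymptotics back through the compactification via (\ref{x-to-C}). The only difference is that you spell out details the paper leaves implicit (the sign analysis placing the stable branch inside $\mathcal{D}$, and the identification $\tilde c=r_{{\bf x}_\ast}$ giving the leading coefficient $Y_{0,i}$), both of which are sound.
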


\begin{proof}
Eigenvalues ${\rm Spec}(A)$ of $A$ consist of $1$ and remaining $n-1$ eigenvalues, all of which have nonzero real parts by our assumption.
\KMi{Because ${\bf Y}_0$ is nonzero, an equilibrium on the horizon ${\bf x}_\ast$ for $g$ is uniquely determined through the identity (\ref{C-to-x}). 
Moreover, the constant $C_\ast$ is defined through the identity $r_{{\bf x}_\ast} = (kC_\ast)^{-1/k} = r_{{\bf Y}_0}$, namely $C_\ast = 1/(k r_{{\bf Y}_0}^{k}) > 0$ from Corollary \ref{cor-Cast-pos}.}
The Jacobian matrix $Dg({\bf x}_\ast)$ has eigenvalues $-C_\ast$ and remaining $n-1$ eigenvalues determined one-to-one by ${\rm Spec}(A)\setminus \{1\}$, all of which have nonzero real parts, thanks to the correspondence \KMb{obtained} in \KMi{Theorem \ref{thm-blow-up-estr}}.
In particular, ${\bf x}_\ast$ is a hyperbolic equilibrium on the horizon satisfying $W^s_{\rm loc}({\bf x}_\ast; g)\cap \mathcal{D} \not = \emptyset$ \KMi{because} $-C_\ast < 0$\KMi{,} and the associated eigenvector ${\bf v}_{\ast, \alpha}$ determines the distribution of $W^s_{\rm loc}({\bf x}_\ast; g)$ transversal to $\mathcal{E}$.
Then Theorem \ref{thm:blowup} shows that $t_{\max} < \infty$ for the corresponding solution with the asymptotic behavior $y_i(t) = O(\theta(t)^{-\alpha_i/k})$, as long as $\KMh{x_{\ast, i}}\not = 0$.
Therefore, the bijection
\begin{equation*}
\frac{x_i(t)}{(1-p({\bf x}(t))^{2c})^{\alpha_i}} = \theta(t)^{-\alpha_i/k}Y_{0,i},\quad i=1,\cdots, n
\end{equation*}
provides the concrete form of the blow-up solution ${\bf y}(t)$ \KMf{whenever $Y_{0,i} \not = 0$}.
\end{proof}

We therefore conclude that {\em asymptotic expansions of blow-up solutions themselves provide a criterion of the existence of blow-up solutions}.
On the other hand, blow-up power eigenvalues do {\em not} extract exact dynamical properties around the corresponding blow-up solutions, as shown below.

\begin{thm}[Stability \KMf{gap}]
\label{thm-stability}
Let $f$ be an asymptotically quasi-homogeneous vector field of type $\alpha$ and order $k+1$ satisfying Assumption \ref{ass-f}.
Let ${\bf x}_\ast$ be a hyperbolic equilibrium on the horizon for the desingularized vector field $g$ associated with $f$ \KMf{such that $W^s_{\rm loc}({\bf x}_\ast; g)\cap \mathcal{D}\not = \emptyset$}, and ${\bf Y}_0$ be the corresponding root of the balance law which is not identically zero.
If
\begin{align*}
m &:= \dim W^s_{\rm loc}({\bf x}_\ast; g),\quad m_A = \sharp \{\lambda\in {\rm Spec}(A) \mid {\rm Re}\lambda < 0\},
\end{align*}
then we have $m = m_A+1$.
\end{thm}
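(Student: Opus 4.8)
The plan is to convert the geometric quantity $m$ into a count of stable eigenvalues and then to compare the spectra of $Dg({\bf x}_\ast)$, $A_g$ and $A$ through the structural identities already in hand. First I would invoke hyperbolicity of ${\bf x}_\ast$ and the Stable Manifold Theorem to write $m=\dim W^s_{\rm loc}({\bf x}_\ast;g)=\sharp\{\mu\in{\rm Spec}(Dg({\bf x}_\ast))\mid {\rm Re}\,\mu<0\}$, counted with algebraic multiplicity. Next I would record the signs of the three distinguished eigenvalues: since $W^s_{\rm loc}({\bf x}_\ast;g)\cap\mathcal{D}\neq\emptyset$ we have $C_\ast>0$ (part of Assumption \ref{ass-f}, and also guaranteed by Corollary \ref{cor-Cast-pos}), so the eigenvalue $-C_\ast$ of $Dg({\bf x}_\ast)$ from Theorem \ref{thm-ev-special} is stable, whereas the eigenvalue $1$ of $A$ from Theorem \ref{thm-ev1} and the eigenvalue $kC_\ast$ of $A_g$ from (\ref{vector-evkC}) lie in the right half-plane.

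The core step I would carry out is to expose a common spectral block from the decomposition (\ref{Dg-proj}), namely $Dg({\bf x}_\ast)=(I-P_\ast)A_g-C_\ast P_\ast$. Working in a basis adapted to $\mathbb{R}^n={\rm span}\{{\bf v}_{\ast,\alpha}\}\oplus T_{{\bf x}_\ast}\mathcal{E}$, and using that ${\bf v}_{\ast,\alpha}$ is an eigenvector of $A_g$ for $kC_\ast$ together with $(A_g-kC_\ast I)P_\ast=0$ from (\ref{A-kC-ortho-P}), both matrices become triangular with the \emph{same} $(n-1)\times(n-1)$ block $M$ acting on $T_{{\bf x}_\ast}\mathcal{E}$: explicitly $A_g$ is block upper triangular with scalar block $kC_\ast$, while $Dg({\bf x}_\ast)$ is block diagonal with scalar block $-C_\ast$. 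Hence, as multisets, ${\rm Spec}(Dg({\bf x}_\ast))=\{-C_\ast\}\cup{\rm Spec}(M)$ and ${\rm Spec}(A_g)=\{kC_\ast\}\cup{\rm Spec}(M)$, so passing from $A_g$ to $Dg({\bf x}_\ast)$ merely replaces the single transversal eigenvalue $kC_\ast$ by $-C_\ast$ and leaves the $T_{{\bf x}_\ast}\mathcal{E}$-spectrum untouched. (Equivalently, I could read off this replacement directly from the correspondence in Theorem \ref{thm-blow-up-estr} and Tables \ref{table-eigen1}--\ref{table-eigen2}.)

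It then remains to transport the count to $A$. By (\ref{conj-A-Ag}) I would write $A=r_{{\bf x}_\ast}^{k}\bigl(r_{{\bf x}_\ast}^{\Lambda_\alpha}A_g\,r_{{\bf x}_\ast}^{-\Lambda_\alpha}\bigr)$ with $r_{{\bf x}_\ast}^{k}=(kC_\ast)^{-1}>0$, so $A$ is a positive multiple of a matrix similar to $A_g$; therefore ${\rm Spec}(A)=r_{{\bf x}_\ast}^{k}\,{\rm Spec}(A_g)$ and the sign of every real part is preserved, giving $m_A=\sharp\{\lambda\in{\rm Spec}(A_g)\mid{\rm Re}\,\lambda<0\}$. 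Combining the multiset identities with the sign information then finishes the proof: since $-C_\ast<0$ the transversal eigenvalue contributes $1$ to the stable count of $Dg({\bf x}_\ast)$, so $m=1+\sharp\{\mu\in{\rm Spec}(M)\mid{\rm Re}\,\mu<0\}$, whereas since $kC_\ast>0$ it contributes $0$ to that of $A_g$, so $m_A=\sharp\{\mu\in{\rm Spec}(M)\mid{\rm Re}\,\mu<0\}$. Subtracting yields $m=m_A+1$.

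The only point I expect to require genuine care — the main obstacle — is the bookkeeping of algebraic multiplicities in the event that $-C_\ast$ or $kC_\ast$ happens to coincide with an eigenvalue already present in ${\rm Spec}(M)$; one must count with algebraic multiplicity throughout and keep the transversal eigenvalue rigorously separate from any coincident eigenvalue on $T_{{\bf x}_\ast}\mathcal{E}$. The triangular form makes the two multiset identities exact regardless of such coincidences, and because $-C_\ast$ and $kC_\ast$ are real and nonzero (never purely imaginary) the coincidence can never shift the stable/unstable split. This robustness is exactly what hyperbolicity of ${\bf x}_\ast$ together with $C_\ast>0$ buys us, and it is the reason the simple count above is legitimate.
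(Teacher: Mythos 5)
Your proof is correct, and its skeleton is the one the paper uses: identify $m$ with the number of stable eigenvalues of $Dg({\bf x}_\ast)$, show that the spectra of $Dg({\bf x}_\ast)$ and $A$ agree (up to the positive factor $r_{{\bf x}_\ast}^{k}$) except for a single transversal eigenvalue, and use $C_\ast>0$ (Corollary \ref{cor-Cast-pos}) to see that the swap of $-C_\ast$ for $1$ shifts the stable count by exactly one. Where you genuinely differ is the mechanism for the middle step. The paper's proof is a two-line citation of Theorem \ref{thm-evec-Dg} and Proposition \ref{prop-correspondence-ev-Ag-A}, i.e.\ of the generalized-eigenvector correspondence built from (\ref{Dg-I-P}) and (\ref{Dg-I-P-2}); that correspondence carries exceptional cases when $kC_\ast\in{\rm Spec}(Dg({\bf x}_\ast))$ or when multiplicities collide (cf.\ the caveat in Theorem \ref{thm-blow-up-estr}), so the assertion that the remaining $n-1$ eigenvalues ``have the identical sign'' counted with multiplicity is left somewhat implicit there. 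You instead block-triangularize: in a basis adapted to ${\rm span}\{{\bf v}_{\ast,\alpha}\}\oplus T_{{\bf x}_\ast}\mathcal{E}$, the relations $A_g{\bf v}_{\ast,\alpha}=kC_\ast{\bf v}_{\ast,\alpha}$ and $Dg({\bf x}_\ast)=(I-P_\ast)A_g-C_\ast P_\ast$ make $A_g$ block upper triangular and $Dg({\bf x}_\ast)$ block diagonal with the \emph{same} tangential block $M$, giving the exact multiset identities ${\rm Spec}(A_g)=\{kC_\ast\}\cup{\rm Spec}(M)$ and ${\rm Spec}(Dg({\bf x}_\ast))=\{-C_\ast\}\cup{\rm Spec}(M)$; the passage to $A$ is then the similarity-plus-positive-scaling (\ref{conj-A-Ag}), exactly as in Proposition \ref{prop-correspondence-ev-Ag-A}. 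What your route buys is airtight multiplicity bookkeeping even when $-C_\ast$ or $kC_\ast$ coincides with an eigenvalue of $M$ --- precisely the situation where the paper's cited correspondence is most delicate --- and it makes the stability-gap theorem self-contained at the linear-algebra level; what the paper's route buys is economy, since Theorem \ref{thm-evec-Dg} is needed anyway for the full eigenvector correspondence of Theorem \ref{thm-blow-up-estr}, after which the gap statement is an immediate corollary.
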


\begin{proof}
Theorem \ref{thm-evec-Dg} and Proposition \ref{prop-correspondence-ev-Ag-A} indicate that $n-1$ eigenvalues of $Dg({\bf x}_\ast)$ and $A$ have the identical sign.
The only difference comes from eigenvalues $1$ associating the eigenvector ${\bf v}_{0,\alpha}$ of $A$, and $-C_\ast$ associating the eigenvector ${\bf v}_{\ast, \alpha}$ of $Dg({\bf x}_\ast)$, respectively.
From the hyperbolicity of ${\bf x}_\ast$, the constant $C_\ast$ is positive by Corollary \ref{cor-Cast-pos} and hence $1$ and $-C_\ast$ have mutually opposite sign, which shows $m=m_A+1$.
\end{proof}

\begin{rem}[Gap of dynamical information of blow-ups]
Theorem \ref{thm-stability} tells us an assertion to interpret stability information of blow-up solutions.
We have two vector fields for characterizing blow-up solutions: the desingularized vector field $g$ and (\ref{blow-up-basic}).
In both systems, the linear parts around steady states (equilibrium on the horizon and roots of the balance law, respectively) characterize the local asymptotic behavior of blow-ups under mild assumptions.
More precisely, {\em stable} eigenvalues in the sense that \KMf{their} real parts are negative and the associated eigenspaces parameterize the asymptotic behavior.
However, the number of such eigenvalues is {\em different} among two systems.
In the desingularized vector field $g$, all dynamical information for the original vector field $f$ are kept through compactifications and time-scale desingularizations, and hence all possible parameter dependence of blow-ups are extracted for a given equilibrium on the horizon.
The blow-up time $t_{\max}$ is regarded as \KMf{a} computable quantity given by solution trajectories for $g$\KMi{, namely (\ref{blow-up-time})}.
On the other hand, $t_{\max}$ is assumed to be fixed in (\ref{blow-up-basic}) and hence the dependence of $t_{\max}$ on initial points is neglected, which causes the gap of parameter dependence of blow-up solutions among two systems.
\KMa{Convergence of asymptotic series (Theorem 3.8 in \cite{asym1})} indicates that $t_{\max}$ expresses the gap of parameter dependence by means of stability information among two systems.
In \cite{LMT2021}, the maximal existence time $t_{\max}$ as \KMi{a} function of initial points is calculated through the parameterization of invariant manifolds (cf. \cite{CFdlL2005}) with computer-assistance, which constructs the foliation of $W^s_{\rm loc}({\bf x}_\ast)$ by means of level sets of $t_{\max}$.
This foliation can express the remaining parameter dependence of blow-ups.
\end{rem}

\section{Examples of asymptotic expansions revisited}
\label{section-examples}

Examples of asymptotic expansions of blow-up solutions shown in \cite{asym1} are \KMg{revisited}. Here the correspondence of algebraic information for describing asymptotic expansions to dynamics at infinity is revealed.

\subsection{One-dimensional ODEs}
\label{section-ex-1dim}

\subsubsection{A simple example}
The first example is 
\begin{equation}
\label{ex1-1dim-1}
y' = -y + y^3\KMb{,\quad {}' = \frac{d}{dt}}.
\end{equation}
If the initial point $y(0) > 0$ is sufficiently large, the corresponding solution would blow up in a finite time.
In \cite{asym1}, the third order asymptotic expansion of the type-I blow-up solution is derived as follows:
\begin{equation*}
y(t) \sim \frac{1}{\sqrt{2}}\theta(t)^{-1/2} + \frac{1}{2\sqrt{2}}\theta(t)^{1/2} +\frac{\sqrt{2}}{48}\theta(t)^{3/2}\quad \text{ as }\quad t\to t_{\max},
\end{equation*}
{\em assuming} its existence.
Here we pay attention to its existence through compactifications and the dynamical correspondence obtained in Section \ref{section-correspondence}.
\par
Here apply the (homogeneous) parabolic compactification and the time-scale desingularization to (\ref{ex1-1dim-1}).
First note that the ODE (\ref{ex1-1dim-1}) is asymptotically homogeneous (namely $\alpha = (1)$) of order $k+1 = 3$, in particular $k=2$.
The following parabolic compactification
\begin{equation*}
y = \frac{x}{1-x^2}.
\end{equation*}
is therefore applied.
The equation (\ref{ex1-1dim-1}) is transformed into
\begin{equation*}
x' = \frac{- x(1-x^2)^2 + x^3}{(1-x^2)(1+x^2)}.
\end{equation*}
Now the following time-scale desingularization is introduced:
\begin{equation*}
\frac{d\tau}{dt} = \frac{2}{(1-x^2)^2(1+x^2)}.
\end{equation*}
The corresponding desingularized vector field is 
\begin{equation}
\label{ex1-desing}
\frac{dx}{d\tau} = \frac{1}{2}(1-x^2)\left\{ - x(1-x^2)^2 + x^3\right\} = \frac{1}{2}x(1-x^2)\left\{ - 1+3x^2 - x^4 \right\}.
\end{equation}
Note that the horizon is $\{x=\pm 1\}$ and \KMb{that} the desingularized system admits equilibria on the horizon: $x=\pm 1$.
Our interest here is the blow-up solution associated with $x_\ast =1$.
The differential of the desingularized vector field at $x_\ast =1$ is 
\begin{equation*}
\left[ \frac{1}{2}(1-x^2)\left\{ - 1+3x^2 - x^4 \right\} - x^2 \left\{ - 1+3x^2 - x^4 \right\} + \frac{1}{2}x(1-x^2)\left\{ 6x - 4x^3 \right\}\right]_{x=x_\ast} = -1,
\end{equation*}
indicating that $x_\ast =1$ is a hyperbolic sink for (\ref{ex1-desing}).
Theorem \ref{thm:blowup} yields that this sink induces a solution $y(t)$ for (\ref{ex1-1dim-1}) with large initial points blowing up at $t = t_{\max} < \infty$ with the blow-up rate
\begin{equation*}
y(t) = O( \theta(t)^{-1/2})\quad \text{ as } \quad t\to t_{\max},
\end{equation*}
and hence the existence of blow-ups mentioned in Assumption \ref{ass-fundamental} is verified without assuming its existence.
The constant $C_\ast$ \KMg{in (\ref{const-horizon})} is
\begin{equation*}
C_\ast = \left. x \left\{ - x(1-x^2)^2 + x^3\right\}\right|_{x = x_\ast } = 1,
\end{equation*}
which is consistent with Theorem \ref{thm-ev-special}. 
Indeed, $-C_\ast = -1$ is the only eigenvalue of the Jacobian matrix $Dg(x_\ast)$. 
\par
Next, we \KMa{partially} review the asymptotic behavior of this blow-up solution to verify the dynamical correspondence.
Assume that
\begin{equation*}
y(t) = \theta(t)^{-1/2}Y(t), 
\end{equation*}
which yields the following equation solving $Y(t)$:
\begin{equation}
\label{system-asymptotic-1dim}
Y' = -Y + \theta(t)^{-1}\left\{ - \frac{1}{2}Y + Y^3\right\}.
\end{equation}
Under the asymptotic expansion of the positive blow-up solution:
\begin{equation*}
Y(t) = \sum_{n=0}^\infty Y_n(t)\quad \text{ with }\quad Y_n(t) \ll Y_{n-1}(t)\quad (t\to t_{\max}-0),\quad \lim_{t\to t_{\max}}Y(t)= Y_0 > 0,
\end{equation*}
the balance law requires $Y_0 = 1/\sqrt{2}$, which is the coefficient of the principal term of $y(t)$.
By definition of the functional $p({\bf y})$ in (\ref{func-p}), we have
\begin{equation*}
\frac{Y_0}{p(Y_0)} = \frac{1/\sqrt{2}}{((1/\sqrt{2})^2)^{1/2}} = 1 \equiv x_\ast,
\end{equation*}
while
\begin{equation*}
(kC_\ast)^{-1/k} x_\ast = 2^{-1/2} = \frac{1}{\sqrt{2}}\equiv Y_0.
\end{equation*}
Therefore the correspondence of roots in Theorem \ref{thm-balance-1to1} is verified.
The blow-up power-determining matrix at $Y_0$, coinciding with the blow-up power eigenvalue, is
\begin{equation*}
\left\{ - \frac{1}{2} + 3Y^2\right\}_{Y = Y_0} = 1,
\end{equation*}
which is consistent with Theorem \ref{thm-ev1}.
This eigenvalue has no contributions to $Y_n(t)$ with $n\geq 1$.
In particular, {\em blow-up power eigenvalues never contribute to determine the orders of \KMg{$\theta(t)$ in } asymptotic expansion of blow-up solutions for any one-dimensional ODEs}.

\subsubsection{Ishiwata-Yazaki's example}
The next example concerns with blow-up solutions of the following system:
\begin{equation}
\label{IY}
u' = a u^{\frac{a+1}{a}}v,\quad v' = a v^{\frac{a+1}{a}}u,
\end{equation}
where \KMg{$a\in (0,1)$} is a parameter.
\begin{rem}[cf. \cite{IY2003, Mat2019}]
\label{rem-IY}
\KMg{Consider} initial points $u(0), v(0) > 0$.
If $u(0) \not = v(0)$, then the solution $(u(t), v(t))$ blows up at $t=t_{\max} < \infty$ with the blow-up rate $O(\theta(t)^{-a})$.
On the other hand, if $u(0) = v(0)$, the solution $(u(t), v(t))$ blows up at $t=t_{\max} < \infty$ with the blow-up rate $O(\theta(t)^{-a/(a+1)})$.
\end{rem}
Introducing the first integral
\begin{equation*}
I = I(u, v) := v^{1-\frac{1}{a}} - u^{1-\frac{1}{a}},
\end{equation*}
the system (\ref{IY}) is reduced to a one-dimensional ODE
\begin{equation}
\label{IY-1dim}
u' = a u^{\frac{a+1}{a}}\left( u^{1 - \frac{1}{a}} + I \right)^{\frac{a}{a-1}}.
\end{equation}
Blow-up solutions of the rate $O(\theta(t)^{-a})$ corresponds to $I \not = 0$, while those of the rate $O(\theta(t)^{-a/(a+1)})$ corresponds to $I=0$.
We pay attention to the case $u(0) > v(0)$ when $I\not = 0$, in which case $I>0$ holds.

\par
\bigskip
First consider the case $I>0$, where the vector field (\ref{IY-1dim}) is asymptotically homogeneous of the order $1+a^{-1}$.
Using the asymptotic expansion
\begin{equation*}
u(t) = \theta(t)^{-a}U(t) = \theta(t)^{-a}\left(\sum_{n=0}^\infty U_n(t)\right),\quad \lim_{t\to t_{\max}} U(t) = U_0,\\
\end{equation*}
the system becomes
\begin{align}
\notag
U' &= a\theta(t)^{-1} \left\{ -U + \KMf{ I^{\frac{a}{a-1}} } U^{\frac{a+1}{a}} \left( I^{-1} \theta(t)^{-(a-1)}U^{1 - \frac{1}{a}} + 1 \right)^{\frac{a}{a-1}}\right\}\\
\label{IY-system-U}
	&= a\theta(t)^{-1} \left\{ -U + \KMf{ I^{\frac{a}{a-1}} } U^{\frac{a+1}{a}} \left(  \sum_{k=0}^\infty \begin{pmatrix}
\frac{a}{a-1} \\ k
\end{pmatrix}\left( I^{-1}\theta(t)^{1-a}U^{\frac{a-1}{a}} \right)^k \right) \right\}, 
\end{align}
where
\begin{equation*}
\begin{pmatrix}
\frac{a}{a-1} \\ k
\end{pmatrix} = \frac{\left(\frac{a}{a-1}\right)_k}{k!},\quad 
\left(\frac{a}{a-1}\right)_k = \frac{a}{a-1} \left(\frac{a}{a-1}-1\right)\left(\frac{a}{a-1}-2\right) \cdots \left(\frac{a}{a-1}-k+1\right).
\end{equation*}

The balance law then yields
\begin{equation*}
\KMa{-U_0 +  I^{\frac{a}{a-1}} U_0^{\frac{a+1}{a}} = 0} \quad \Rightarrow \quad U_0 = I^{-a^2/(a-1)},
\end{equation*}
\KMf{where the above choice of $U_0$ is consistent with the setting mentioned in Remark \ref{rem-IY}.}
The \KMf{corresponding} blow-up power-determining matrix is
\begin{align*}
\frac{d}{dU}\left(a \left\{ -U + U^{\frac{a+1}{a}} I^{\frac{a}{a-1}} \right\} \right)_{U=U_0} &=a \left\{ -1 + \frac{a+1}{a}U_0^{\frac{1}{a}} I^{\frac{a}{a-1}} \right\} = 1,
\end{align*}
which is consistent with Theorem \ref{thm-ev1}.
\par
\bigskip
Similarly, the blow-up power eigenvalue in the case $I=0$ is confirmed to be consistent with Theorem \ref{thm-ev1}. 
Details are shown in \cite{asym1}.
\par
\bigskip
As a summary, we obtain the following result for asymptotic expansions of blow-up solutions.
\begin{align}
\label{IY-sol-Ipos}
u(t) &\sim I^{-a^2/(a-1)} \theta(t)^{-a} + I^{\frac{-2a^2+1}{a-1}}
\frac{a^2}{(1-a)(2-a)} \theta(t)^{1-2a},\quad
v(t) \sim I^{\frac{a}{a-1}} - I^{\frac{1}{a-1}-a} \frac{a}{1-a} \theta(t)^{1-a}
\end{align}
with $I> 0$, while
\begin{equation}
\label{IY-sol-I0}
u(t) = v(t) = \left( \frac{1}{a+1}\right)^{\frac{a}{a+1}}\theta(t)^{-a/(a+1)}
\end{equation}
with $I=0$ as $t\to t_{\max}-0$.
Note that the solution through the above argument coincides with that obtained by the method of separation of variables in the original equation
\begin{equation*}
u' = au^{2+\frac{1}{a}}.
\end{equation*}
Details are summarized in \cite{asym1}.
\par
\bigskip
\begin{rem}[Correspondence of parameter dependence]
\KMa{
The expansion (\ref{IY-sol-Ipos}) contains two free parameters: $t_{\max}$ and $I$.
This fact reflects the dynamical property that the solution (\ref{IY-sol-I0}) is induced by the hyperbolic sink on the horizon admitting the {\em two}-dimensional stable manifold (after further time-scale desingularizations), as observed in \cite{Mat2019}.
}
On the other hand, when $I=0$, $u(t) \equiv v(t)$ \KMg{holds} for $t\geq 0$ by the invariance of the first integral $I$.
The expansion (\ref{IY-sol-I0}) is parameterized by only {\em one} parameter $t_{\max}$, namely initial points $u(0) = v(0)$.
Similar to (\ref{IY-sol-Ipos}), this fact reflects the dynamical property that the solution (\ref{IY-sol-I0}) is induced by the hyperbolic saddle \KMg{on the horizon} admitting the {\em one}-dimensional stable manifold (\cite{Mat2019}).
\KMa{Gaps of the number of free parameters are consistent with Theorem \ref{thm-stability}.} 
\end{rem}

\subsection{Two-phase flow model}
\label{section-ex-2phase}

The following system is reviewed next (see e.g. \cite{KSS2003, Mat2018} for the details of the system):
\begin{equation}
\label{two-fluid-1}
\begin{cases}
\beta' = vB_1(\beta) - c\beta - c_1, & \\
v' =  v^2 B_2(\beta) - cv - c_2, & 
\end{cases}\quad {}'=\frac{d}{dt},
\end{equation}
where
\begin{equation*}
B_1(\beta) = \frac{(\beta-\rho_1)(\beta-\rho_2)}{\beta},\quad B_2(\beta) = \frac{\beta^2- \rho_1\rho_2}{2\beta^2}
\end{equation*}
with $\rho_2 > \rho_1 > 0$, 
\begin{equation*}
c = \frac{v_R B_1(\beta_R) - v_L B_1(\beta_L)}{\beta_R - \beta_L}
\end{equation*}
and $(c_1,c_2) = (c_{1L}, c_{2L})$ or $(c_{1R}, c_{2R})$, where 
\begin{equation*}
\label{constants-two-phase}
\begin{cases}
c_{1L} = v_L B_1(\beta_L) - c\beta_L, & \\
c_{2L} = v_L^2 B_2(\beta_L) -cv_L, & \\
\end{cases}
\quad
\begin{cases}
c_{1R} = v_R B_1(\beta_R) - c\beta_R, & \\
c_{2R} = v_R^2 B_2(\beta_R) -cv_R. & \\
\end{cases}
\end{equation*}
Points $(\beta_L, v_L)$ and $(\beta_R, v_R)$ are given in advance.
The system (\ref{two-fluid-1}) is asymptotically quasi-homogeneous of type $(0,1)$ and order $2$.
Following arguments in \cite{Mat2018}, we observe that there is a blow-up solution with the asymptotic behavior
\begin{equation}
\label{two-fluid-2}
\beta(t) \sim \rho_2,\quad v(t)\sim V_0\theta(t)^{-1}\quad\text{ as }\quad t\to t_{\max}-0,
\end{equation}
which is consistent with arguments in \cite{KSS2003}.
In particular, type-I blow-up solutions are observed.
\begin{rem}
In \cite{Mat2018}, two hyperbolic saddles on the horizon for the desingularized vector field are observed.
One of these saddles admits the {\em $1$-dimensional stable manifold}, which associates a family of blow-up solutions of the above form.
Another saddle admits the {\em $1$-dimensional unstable manifold}, which associates a family of blow-up solutions of the similar form {\em with time reversing}.
\end{rem}
\par
Our main concern here is to derive multi-order asymptotic expansion of the blow-up solution (\ref{two-fluid-2}) for (\ref{two-fluid-1}).
To this end, write the blow-up solution $(\beta(t), v(t))$ as follows:
\begin{align}
\notag
\beta(t) &= b(t),\quad v(t) = \theta(t)^{-1}V(t),\\
\label{form-b-V}
b(t) &= \sum_{n=0}^\infty b_n(t) \equiv b_0 + \tilde b(t),\quad b_0 = \rho_2,\quad b_n(t) \ll b_{n-1}(t)\quad (t\to t_{\max}-0),\quad n\geq 1,\\
\notag
V(t) &= \sum_{n=0}^\infty V_n(t)\equiv V_0 + \tilde V(t),\quad V_n(t) \ll V_{n-1}(t)\quad (t\to t_{\max}-0),\quad n\geq 1.
\end{align}
The balance law which $(b_0, v_0)$ satisfies can be easily derived. 
Substituting the form (\ref{form-b-V}) into (\ref{two-fluid-1}), we have
\begin{align*}
\beta' &= b' \\
	&= \theta(t)^{-1} V B_1(b) - cb - c_1,\\
v' &= \theta(t)^{-2} V + \theta(t)^{-1}V'\\
	&= \theta(t)^{-2} V^2 B_2(b) - c\theta(t)^{-1}V - c_2.
\end{align*}
Dividing the first equation by $\theta(t)^{0} \equiv 1$ and the second equation by $\theta(t)^{-1}$, we have 
\begin{align}
\label{2phase-asym-main}
\frac{d}{dt}\begin{pmatrix}
b \\
V
\end{pmatrix} = \theta(t)^{-1} \begin{pmatrix}
V B_1(b) \\
-V  + V^2 B_2(b)
\end{pmatrix} - \begin{pmatrix}
cb + c_1 \\
cV  + \theta(t) c_2
\end{pmatrix}
\end{align}
The balance law is then 
\begin{equation*}
\begin{pmatrix}
V_0 B_1(b_0) \\
-V_0  + V_0^2 B_2(b_0)
\end{pmatrix} = \begin{pmatrix}
0 \\ 
0
\end{pmatrix},
\end{equation*}
that is,
\begin{equation*}
V_0  \frac{(b_0-\rho_1)(b_0-\rho_2)}{\beta} = 0,\quad -V_0 + V_0^2 \frac{b_0^2- \rho_1\rho_2}{2b_0^2} = 0.
\end{equation*}
In the present case, $b_0 = \rho_2$ is already determined as the principal term of $b(t)$, which satisfies the first equation.
Substituting $b_0 = \rho_2$ into the second equation, we have
$V_0 = 2\rho_2 / (\rho_2- \rho_1)$, provided $V_0 \not =0$.
As a summary, the root of the balance law (under (\ref{two-fluid-2})) is uniquely determined by
\begin{equation}
\label{balance-two-phase}
(b_0, V_0) = \left(\rho_2, \frac{2\rho_2}{\rho_2- \rho_1}\right).
\end{equation}
Letting
\begin{equation*}
\KMg{\bar f}(b,V) := \begin{pmatrix}
V B_1(b) \\
-V  + V^2 B_2(b)
\end{pmatrix} \KMg{ \equiv -\frac{1}{k}\Lambda_\alpha \begin{pmatrix}
b \\ V
\end{pmatrix}+ f_{\alpha, k}(b,V)},
\end{equation*}
we have
\begin{align*}
D\KMg{\bar f}(b_0, V_0) &= \begin{pmatrix}
V_0\frac{d}{d\beta}B_1(\beta)|_{\beta=b_0} & B_1(b_0)\\
V_
0^2 \frac{d}{d\beta}B_2(\beta)|_{\beta=b_0} & -1 + 2V_0 B_2(b_0)\\
\end{pmatrix}
	= \begin{pmatrix}
V_0(1 - \rho_1\rho_2b_0^{-2}) & \frac{(b_0-\rho_1)(b_0-\rho_2)}{\KMf{b_0}}\\
V_0^2 \rho_1\rho_2 b_0^{-3} & -1 + 2V_0 \frac{b_0^2- \rho_1\rho_2}{2b_0^2}\\
\end{pmatrix},
\end{align*}
which is the blow-up power determining matrix associated with the blow-up solution (\ref{two-fluid-2}).
Using (\ref{balance-two-phase}), we have
\begin{align*}
A\equiv D\KMg{\bar f}(b_0, V_0) &= \begin{pmatrix}
 \frac{2\rho_2}{\rho_2- \rho_1}(1 - \rho_1\rho_2^{-1}) & 0\\
( \frac{2\rho_2}{\rho_2- \rho_1})^2 \rho_1\rho_2 b_0^{-3} & -1 + 2 \frac{2\rho_2}{\rho_2- \rho_1} \frac{\rho_2- \rho_1}{2\rho_2}\\
\end{pmatrix}\\
	&=\begin{pmatrix}
2 & 0\\
\frac{4\rho_1}{(\rho_2- \rho_1)^2} & 1\\
\end{pmatrix}.
\end{align*}
Indeed, $1$ is one of eigenvalues.
The corresponding eigenvector is
\begin{align*}
\begin{pmatrix}
2 & 0\\
\frac{4\rho_1}{(\rho_2- \rho_1)^2} & 1\\
\end{pmatrix}\begin{pmatrix}
x_1 \\ x_2
\end{pmatrix} = \begin{pmatrix}
x_1 \\ x_2
\end{pmatrix} \quad & \Rightarrow \quad \begin{pmatrix}
x_1 \\ x_2
\end{pmatrix} = \begin{pmatrix}
0 \\ 1
\end{pmatrix}
\end{align*}
and hence the present argument is consistent with Theorem \ref{thm-ev1}.
Recall that the type $\alpha$ is now $(0,1)$.

\subsection{Andrews' system I}
\label{section-ex-Andrews1}

Consider the following system (cf. \cite{A2002, Mat2019})\footnote{
There is a typo of the system in \cite{Mat2019} (Equation (4.7)). 
The correct form is (\ref{Andrews1}) below.
The rest of arguments in \cite{Mat2019} is developed for the correct system (\ref{Andrews1}).
}:
\begin{equation}
\label{Andrews1}
\begin{cases}
\displaystyle{
\frac{du}{dt} = \frac{1}{\sin \theta} vu^2 - \frac{2a\cos\theta}{\sin \theta}u^3
}
, & \\
\displaystyle{
\frac{dv}{dt} =  \frac{a}{\sin \theta} uv^2 + \frac{1-a}{\sin \theta} \frac{uv^3}{v + 2\cos\theta u}
}, &
\end{cases}
\end{equation}
where $a\in (0,1/2)$ and $\theta\in (0,\pi/2)$ are constant parameters, according to our interests (\cite{asym1}).
We easily see that the system (\ref{Andrews1}) is homogeneous of the order $3$, in particular $k=2$, and arguments in \cite{Mat2019} show that the first term of stationary blow-up solutions has the form
\begin{equation*}
u(t) = O(\theta(t)^{-1/2}),\quad v(t) = O(\theta(t)^{-1/2}).
\end{equation*}
Introducing
\begin{equation*}
w = u\cos\theta,\quad s = \frac{t}{\sin\theta \cos\theta},
\end{equation*}
the vector field (\ref{Andrews1}) is then transformed into
\begin{equation}
\frac{dw}{ds} = w^2 (v - 2a w),\quad \frac{dv}{ds} = wv^2 \frac{v + 2aw}{v+2w},
\end{equation}
which is independent of $\theta$.
Because $\theta \in (0,\pi/2)$, then $\sin \theta > 0$ and $\cos \theta > 0$\KMg{,} and hence dynamics of $(u,v)$ in $t$-time scale and $(w, v)$ in $s$-variable are mutually smoothly equivalent. 
\par
The following asymptotic expansions of blow-up solutions are considered:
\begin{equation*}
w(t) = \tilde \theta(s)^{-1/2}\sum_{n=0}^\infty W_n(s),\quad v(t) = \tilde \theta(s)^{-1/2}\sum_{n=0}^\infty V_n(s), \quad \tilde \theta(s) = s_{\max}-s = \frac{2}{\sin(2\theta)}\theta(t).
\end{equation*}
\KMa{In particular, we pay attention to solutions with {\em positive} initial points according to arguments in \cite{asym1}.}
%
The balance law for the solution under the assumption $W_0, V_0 \not = 0$ is 
\begin{equation*}
\frac{1}{2} = W_0 V_0 - 2aW_0^2,\quad 
\frac{1}{2} = W_0 V_0\frac{V_0 + 2aW_0}{V_0 + 2W_0}.
\end{equation*}
We therefore have
\begin{equation}
\label{balance-Andrews-result}
W_0 = \pm \sqrt{\frac{1-2a}{8a^2}},\quad V_0 = \frac{2a}{1-2a}W_0 = \pm \sqrt{\frac{1}{2(1-2a)}}, \end{equation}
\KMa{where the positive roots are chosen according to our setting.}
Moreover, the root $V_0$ can be achieved as a real value by the assumption of $a$.
\par
As seen in \cite{asym1}, the blow-up power-determining matrix is $A = -\frac{1}{2} I_2 + C(W_0, V_0; a)$, where 
\begin{align*}
&C(W_0, V_0; a) 
=\begin{pmatrix}
2V_0W_0 - 6aW_0^2 & W_0^2 \\
V_0^2 \left\{ \frac{V_0+2aW_0}{V_0+2W_0} - W_0 \frac{2(1-a)V_0}{(V_0+2W_0)^2} \right\} & V_0W_0 \left\{ 2 \frac{V_0+2aW_0}{V_0+2W_0} + V_0 \frac{2(1-a)W_0}{(V_0+2W_0)^2} \right\} \\ 
\end{pmatrix}.
\end{align*}
In \cite{asym1}, we have simplified the matrix into
\begin{align*}
C(W_0, V_0; a) &= 
\begin{pmatrix}
\frac{3}{2} - \frac{1}{4a}  & W_0^2 \\
\frac{a}{1-a} V_0^2 & \frac{3}{2} - \frac{1}{4(1-a)} 
\end{pmatrix}
\end{align*}
after lengthy calculations and eigenpairs of $A$ are calculated as follows:
\begin{align*}
\left\{1, \begin{pmatrix}
\sqrt{\frac{1-2a}{8a^2}}\\
\sqrt{\frac{1}{2(1-2a)}}
\end{pmatrix}\right\},\quad 
\left\{ 1 - \frac{1}{4a(1-a)}, \begin{pmatrix}
\sqrt{\frac{1-2a}{8a^2}}\\
\frac{a}{1-a}\sqrt{\frac{1}{2(1-2a)}}
\end{pmatrix}
\right\}.
\end{align*}
The first eigenpair is indeed consistent with Theorem \ref{thm-ev1}. 
Compare with (\ref{balance-Andrews-result}).

\subsection{Andrews' system II}
\label{section-ex-Andrews2}

The next example is the following system:
\begin{align}
\label{Andrews2}
	\begin{cases}
		u' = u^2 (2av - bu),
		\\
		v' = bu v^2
	\end{cases}
\end{align}
with parameters $a, b$ with $a > 0$ and $2a > b > 0$.
Our interest here is the asymptotic expansion of blow-up solutions with $u(0),v(0)>0$.
As in previous examples, we introduce 
\begin{equation}
\label{trans-Andrews2}
\tilde u = \frac{u}{\sqrt{b}},\quad \tilde v = \frac{v}{\sqrt{b}},\quad \KMg{s = b^2 t},\quad 2a = b(1+\sigma)
\end{equation}
with an auxiliary parameter $\sigma$, which transform (\ref{Andrews2}) into
\begin{align}
\label{Andrews2-transformed}
\frac{d\tilde u}{ds} = \tilde u^2 \left\{ (1+\sigma) v - \tilde u \right\},\quad \frac{d\tilde v}{ds} = \tilde u \tilde v^2.
\end{align}
In particular, the system becomes a {\em one}-parameter family.
Our interest here is then the blow-up solution $(\tilde u(s), \tilde v(s))$ with the following blow-up rate
\begin{equation*}
\tilde u(t) = O(\tilde \theta(s)^{-1/2}),\quad \tilde v(t) = O(\tilde \theta(s)^{-1/2}),\quad \tilde \theta(s) = s_{\max}-s = \KMg{b^2}\theta(t).
\end{equation*}
Expand the solution $(\tilde u(s), \tilde v(s))$ as the asymptotic series
\begin{align}
\notag
\tilde u(s) &= \tilde \theta(s)^{-1/2}U(s) \equiv \tilde \theta(s)^{-1/2}\sum_{n=0}^\infty U_n(s),\quad U_n(s) \ll U_{n-1}(s),\quad \lim_{s\to s_{\max}}U_n(s) = U_0, \\
\label{series-Andrews2}
\tilde v(s) &= \tilde \theta(s)^{-1/2}V(s) \equiv \tilde \theta(s)^{-1/2}\sum_{n=0}^\infty V_n(s),\quad V_n(s) \ll V_{n-1}(s), \quad \lim_{s\to s_{\max}}V_n(s) = V_0.
\end{align} 
Substituting 
\eqref{series-Andrews2} into \eqref{Andrews2-transformed}, we have
\begin{align}
\label{asymptotic-system-Andrews2}
\frac{dU}{ds} = \tilde \theta(s)^{-1} \left\{ -\frac{1}{2}U + U^2\{ (1+\sigma) V - U\} \right\},\quad 
\frac{dV}{ds} = \tilde \theta(s)^{-1} \left\{ -\frac{1}{2}V + UV^2\right\}.
\end{align}

The balance law under $(U_0, V_0)\not = (0, 0)$ requires
\begin{align*}
-\frac{1}{2} + U_0 \{(1+\sigma)V_0 - U_0 \} = 0,\quad -\frac{1}{2} + U_0V_0 = 0\KMg{,}
\end{align*}
and hence 
\begin{equation}
\label{identity-Andrews2}
1 = 2U_0\{(1+\sigma) V_0 - U_0 \},\quad 1 = 2U_0V_0\KMg{,}
\end{equation}
which are used below.
These identities yield the following consequence:
\begin{equation}
\label{balance-Andrews2}
U_0 = \sqrt{\frac{\sigma}{2}},\quad V_0 = \frac{1}{\sqrt{2\sigma}}\KMg{,}
\end{equation}
and we \KMg{have} the first order asymptotic expansion of blow-up solutions:
\begin{equation*}
\tilde u(s)\sim \sqrt{\frac{\sigma}{2}} \tilde \theta(s)^{-1/2},\quad 
\tilde v(s)\sim \frac{1}{\sqrt{2\sigma}} \tilde \theta(s)^{-1/2}
\end{equation*}
as $s\to s_{\max}$.
\par
\bigskip
The blow-up power-determining matrix is $A = -\frac{1}{2}I_2 +D(U_0, V_0; \sigma)$, where
\begin{align*}
&D(U_0, V_0; \sigma) =  
\begin{pmatrix}
 2(1+\sigma)UV - 3U^2 & (1+\sigma)U^2 \\
 V^2 & 2UV
\end{pmatrix}_{(U,V) = (U_0, V_0)} = 
 \begin{pmatrix}
 1 - \frac{\sigma}{2} & \frac{\sigma(1+\sigma)}{2} \\
 \frac{1}{2\sigma} & 1
\end{pmatrix}
\end{align*}
under the identity (\ref{identity-Andrews2}).
The blow-up power eigenvalues are
\begin{equation*}
\lambda = 1,\quad -\frac{\sigma}{2}.
\end{equation*}
\KMa{
The eigenvector associated with $\lambda = 1$ is
\begin{equation*}
 \begin{pmatrix}
 \frac{1}{2} - \frac{\sigma}{2} & \frac{\sigma(1+\sigma)}{2} \\
 \frac{1}{2\sigma} & \frac{1}{2}
\end{pmatrix}
 \begin{pmatrix}
x_1 \\
x_2
\end{pmatrix}
= \begin{pmatrix}
x_1 \\
x_2
\end{pmatrix} \quad \Rightarrow \quad 
\begin{pmatrix}
x_1 \\
x_2
\end{pmatrix} = \begin{pmatrix}
\sigma \\ 1
\end{pmatrix}.
\end{equation*}
Therefore the existence of eigenpair associated with the eigenvalue $\lambda = 1$ is consistent with Theorem \ref{thm-ev1} (see also (\ref{balance-Andrews2})).
}

\subsection{Keyfitz-Kranser-type system}
\label{section-ex-KK}

The next example is a $2$-dimensional system
\begin{equation}
\label{KK}
	u' = u^2 - v, \quad v' = \frac{1}{3} u^3 - u\KMg{,}
\end{equation}
originated from the Keyfitz-Kranser system \cite{KK1990} which is a system of conservation laws admitting a singular shock.
The system is asymptotically quasi-homogeneous of type $\alpha=(\alpha_1, \alpha_2)=(1,2)$ and order $k+1 = 2$, consisting of the quasi-homogeneous part $f_{\alpha, k}$ and the lower-order part $f_{\rm res}$ given as follows: 
\begin{equation}
	f_{\alpha, k}(u,v) = \begin{pmatrix}
 u^2 - v\\
 \frac{1}{3} u^3	
 \end{pmatrix}, \quad f_{\mathrm{res}}(u,v) = \begin{pmatrix}
 0\\
 -u	
 \end{pmatrix}.
\end{equation}
It is proved in \cite{Mat2018} that the system (\ref{KK}) admits the following solutions blowing up as $t \to t_{\max} - 0$ associated with {\em two} different equilibria on the horizon:
\begin{equation}
	u(t) = O(\theta(t)^{-1}), \quad v(t) = O (\theta(t)^{-2}), \quad \text{as} \quad t \to t_{\max} - 0.
\end{equation}
In \cite{asym1}, the quasi-homogeneous part $f_{\alpha, k}$ and the full system (\ref{KK}) are individually investigated.
However, the balance law and associated blow-up power-determining matrices and their eigenvalues are identical, because the quasi-homogeneous part of vector fields are identical between two systems.
\par
Expand the solution $(u(t),v(t))$ as the asymptotic series
\begin{align}
\notag
u(t) &= \theta(t)^{-1}U(t) \equiv \theta(t)^{-1}\sum_{n=0}^\infty U_n(t),\quad U_n(t) \ll U_{n-1}(t),\quad \lim_{t\to t_{\max}}U_n(t) = U_0, \\
\label{series-KK}
v(t) &= \theta(t)^{-2}V(t) \equiv \theta(t)^{-2}\sum_{n=0}^\infty V_n(t),\quad V_n(t) \ll V_{n-1}(t), \quad \lim_{t\to t_{\max}}V_n(t) = V_0.
\end{align}
The balance law for (\ref{KK}) is
\begin{equation*}
	U_0 = U_0^2 - V_0,\quad 2V_0 = \frac{1}{3} U_0^3,
\end{equation*}
which yields
\begin{equation}
\label{balance-KK}
	U_0 = 3 \pm \sqrt{3}, \quad V_0 = \frac{1}{6} U_0^3.
\end{equation}
In particular, we have two different solutions of the balance law, which correspond to different equilibria on the horizon inducing blow-up solutions in the forward time direction, as mentioned above.
Depending on the choice of $U_0$, the eigenstructure of the blow-up power-determining matrix \KMg{changes}.
\begin{description}
\item[Case 1. $U_0 = 3-\sqrt{3}$.] 
\end{description}
In this case, the blow-up power-determining matrix $A$ is
\begin{equation*}
A  = \begin{pmatrix}
5 - 2\sqrt{3} & -1 \\
12 - 6\sqrt{3} & -2
\end{pmatrix}.
\end{equation*}
The associated eigenpairs are
\begin{equation*}
\left\{1, \begin{pmatrix}
1 \\ 4-2\sqrt{3}
\end{pmatrix} \right\},\quad \left\{2-2\sqrt{3},  \begin{pmatrix}
1 \\ 3
\end{pmatrix} \right\}.
\end{equation*}
Note that the eigenvector $(1, 4-2\sqrt{3})^T$ associated with $\lambda = 1$ is consistent with Theorem \ref{thm-ev1}.

\begin{description}
\item[Case 2. $U_0 = 3+\sqrt{3}$.] 
\end{description}
In this case, the blow-up power-determining matrix $A$ is
\begin{equation*}
A = \begin{pmatrix}
5 + 2\sqrt{3} & -1 \\
12 + 6\sqrt{3} & -2
\end{pmatrix}.
\end{equation*}
The associated eigenpairs are
\begin{equation*}
\left\{1, \begin{pmatrix}
1 \\ 4+2\sqrt{3}
\end{pmatrix} \right\},\quad \left\{2+2\sqrt{3},  \begin{pmatrix}
1 \\ 3
\end{pmatrix} \right\}.
\end{equation*}
Note that the eigenvector $(1, 4+2\sqrt{3})^T$ associated with $\lambda = 1$ is consistent with Theorem \ref{thm-ev1}.

\subsubsection{Correspondence to desingularized vector fields: numerical study}

We shall \KMg{investigate the correspondence of information we have obtained above to dynamical information in the desingularized vector field} to confirm our results obtained in Section \ref{section-correspondence}.
The desingularized vector field associated with (\ref{KK}) is (cf. \cite{LMT2021, MT2020_1})
\begin{align}
\notag
\frac{dx_1}{d\tau} &= \frac{1}{4}\left(1 + 3p({\bf x})^4\right)(x_1^2 - x_2) - x_1G({\bf x}),\\
\label{KK-desing}
\frac{dx_2}{d\tau} &= \frac{1}{4}\left(1 + 3p({\bf x})^4\right)\left(\frac{1}{3}x_1^3 - (1-p({\bf x})^4)^2 x_1 \right) - 2x_2G({\bf x}),\\ 
\notag
p({\bf x}) &= (x_1^4 + x_2^2)^{1/4},\\ 
\notag
G({\bf x}) &= x_1^3(x_1^2 - x_2) + \frac{x_2}{2}\left(\frac{1}{3}x_1^3 - (1-p({\bf x})^4)^2 x_1 \right).
\end{align}
Note that the vector field $\frac{d{\bf x}}{d\tau} = g({\bf x})$ is polynomial of order $13$, while the original vector field $f$ is order at most $3$.
Our interest here is the following two equilibria on the horizon\footnote{
It is reported in \cite{Mat2018} that (\ref{KK-desing}) admits four equilibria on the horizon.
The remaining two equilibria induce blow-up solutions {\em in reverse time direction}.
In \cite{LMT2021}, {\em computer assisted proof}, equivalently {\em rigorous numerics} is applied to proving the existence of true \KMb{sink} $p_{\infty}^+$ and the true \KMb{saddle} $p_{\infty, s}^{+}$.
}:
\begin{equation*}
p_\infty^+ \approx (0.989136995894978, 0.206758557005181),\quad 
p_{\infty,s}^+ \approx (0.88610812897803, 0.61925794892101). 
\end{equation*}
The values $C_\ast$ corresponding to these equilibria are
\begin{equation*}
C_\ast^+ \equiv C_\ast(p_\infty^+) \approx 0.780107753370182,\quad C_{\ast,s}^+ \equiv C_\ast(p_{\infty,s}^+) \approx 0.187256681090721.
\end{equation*}
The parameter dependence associated with blow-up power eigenvalues is different among different $U_0$.
This difference reflects the dynamical property of corresponding equilibria on the horizon, as seen below and preceding works.
One root of the balance law is ${\bf Y}_0\equiv (U_0, V_0) = (3-\sqrt{3}, 9-5\sqrt{3})$. 
The corresponding scale parameter $r_{{\bf Y}_0}\equiv r_{{\bf Y}_0}^-$ is
\begin{align*}
r_{{\bf Y}_0}^- \equiv p({\bf Y}_0) &= \left(U_0^4 + V_0^2\right)^{1/4}\\
	&= \left\{ (3-\sqrt{3})^4 + (9-5\sqrt{3})^2 \right\}^{1/4}\\
	&= (408 - 234\sqrt{3})^{1/4}\\	
	&\approx \left\{2.70011102888\cdots \right\}^{1/4}\\	
	&\approx 1.2818741971,
\end{align*}
\KMa{where the functional $p({\bf y})$ is given in (\ref{func-p})}
It follows from direct calculation that
\begin{equation*}
r_{{\bf Y}_0}^- \approx \frac{1}{0.780107753370182},
\end{equation*}
which agrees with $1/C_\ast^+ = r_{p_{\infty}^+}$\KMg{($= r_{{\bf x}_\ast}$ in Theorem \ref{thm-balance-1to1})}, and implies the identity $r_{{\bf Y}_0}^- = r_{p_{\infty}^+}$ with $k=1$ mentioned in Theorem \ref{thm-balance-1to1}.
From (\ref{C-to-x}), \KMg{we have}
\begin{align}
\notag
(x_{\ast,1}, x_{\ast,2}) &\equiv \left(\frac{U_0}{\KMg{r_{{\bf Y}_0}^-}}, \frac{V_0}{\KMg{(r_{{\bf Y}_0}^-)^2}}\right) \\
\notag
	&= \left(\frac{3-\sqrt{3}}{(408 - 234\sqrt{3})^{1/4}}, \frac{9 - 5\sqrt{3}}{(408 - 234\sqrt{3})^{1/2}}\right) \\
\label{sink-KK-validated}
	&\approx \left(0.98913699589, 0.206758557\right),
\end{align}
which is indeed an equilibrium on the horizon $p_{\infty}^{+}$ for (\ref{KK-desing}).
\par
Similarly, consider another root of the balance law ${\bf Y}_0\equiv (U_0, V_0) = (3+\sqrt{3}, 9+5\sqrt{3})$. 
The corresponding scale parameter $r_{{\bf Y}_0}\equiv r_{{\bf Y}_0}^+$ is
\begin{align*}
r_{{\bf Y}_0}^+ &= \left(U_0^4 + V_0^2\right)^{1/4}\\
	&= \left\{ (3+\sqrt{3})^4 + (9+5\sqrt{3})^2 \right\}^{1/4}\\
	&= \left\{ (12 + 6\sqrt{3})^2 + (9+ 5\sqrt{3})^2 \right\}^{1/4}\\
	&= \left\{ (144 + 108 + 144\sqrt{3}) + (81 + 75 + 90\sqrt{3}) \right\}^{1/4}\\	
	&= (408 + 234\sqrt{3})^{1/4}\\	
	&\approx 5.34026339768.	
\end{align*}
It follows from direct \KMg{calculations} that
\begin{equation*}
r_{{\bf Y}_0}^+ \approx \frac{1}{0.187256681090721},
\end{equation*}
which agrees with $1/C_{\ast,s}^+ = r_{p_{\infty,s}^+}$, and implies the identity $r_{{\bf Y}_0}^+ = r_{p_{\infty,s}^+}$ with $k=1$ similar to the case \KMg{of} $p_{\infty}^+$.
From (\ref{C-to-x}),
\begin{align}
\notag
(x_{\ast,1}, x_{\ast,2}) &\equiv \left(\frac{U_0}{\KMg{r_{{\bf Y}_0}^+}}, \frac{V_0}{\KMg{(r_{{\bf Y}_0}^+)^2}}\right) \\
\notag
	&= \left(\frac{3+\sqrt{3}}{(408 + 234\sqrt{3})^{1/4}}, \frac{9+5\sqrt{3}}{(408 + 234\sqrt{3})^{1/2}}\right) \\
\notag
	&\approx \left(\frac{4.73205080757}{5.34026339768}, \frac{17.6602540378}{28.5184131566}\right)\\
\label{saddle-KK-validated}
	&\approx \left(0.88610812897, 0.61925754917\right),
\end{align}
which is indeed an equilibrium on the horizon $p_{\infty,s}^{+}$ for (\ref{KK-desing}).
\par
Next eigenvalues of the Jacobian matrices $Dg(p_{\infty}^+)$ and $Dg(p_{\infty,s}^+)$ are computed, respectively:
\begin{align*}
{\rm Spec}(Dg(p_{\infty}^+)) &= \{ -0.780107753370184,  -1.142157021690769\},\\
{\rm Spec}(Dg(p_{\infty, s}^+)) &= \{ -0.187256681090720, 1.023189533593166\}.
\end{align*}
It immediately follows from the above calculations that $-C_\ast^+$ and $-C_{\ast,s}^+$ are eigenvalues of $Dg(p_{\infty}^+)$ and $Dg(p_{\infty,s}^+)$, respectively.
It also follows that another eigenvalues satisfy the following identity, which \KMg{is} consistent with Theorem \ref{thm-ev1}:
\begin{align*}
\frac{2-2\sqrt{3}}{r_{{\bf Y}_0}^-} &= \frac{2-2\sqrt{3}}{(408 - 234\sqrt{3})^{1/4} } \approx -1.142157021690769,\\
\frac{2+2\sqrt{3}}{r_{{\bf Y}_0}^+} &= \frac{2+2\sqrt{3}}{(408 + 234\sqrt{3})^{1/4} } \approx 1.023189533593166.
\end{align*}
In particular, \KMa{$p_{\infty}^+$} is a sink admitting two-dimensional stable manifold for the desingularized vector field, while $p_{\infty, s}^{+}$ is a saddle admitting one-dimensional stable manifold.
Both stable manifolds admit nonempty intersections with the interior of compactified phase space \KMb{$\overline{\mathcal{D}} = \{p({\bf x}) \leq 1\}$}, which follows from the inequality $-C_{\ast} < 0$. 
\par
\bigskip
Finally we investigate the correspondence of eigenvectors associated with the above eigenvalues.
Eigenpairs of $Dg(p_{\infty}^+)$ are
\begin{equation}
\label{epair-KK-sink}
\left\{ -C_\ast^+, \begin{pmatrix}
0.922620374008554 \\
0.385709275833906
\end{pmatrix}\right\},\quad \left\{ \frac{2-2\sqrt{3}}{r_{{\bf Y}_0}^-}, 
\begin{pmatrix}
-0.1062185325758257\\
0.9943428097680588
\end{pmatrix}\right\},
\end{equation}
where we have used the identity of eigenvalues derived above.
It is easily checked that
\begin{equation*}
\frac{0.922620374008554}{0.385709275833906} \approx \frac{0.989136995894978}{2\times 0.206758557005181} \approx \frac{(p_\infty^+)_1}{2(p_\infty^+)_2},
\end{equation*}
which is consistent with Theorem \ref{thm-ev1} about eigenvectors associated with the eigenvalue $-C_\ast$.
To check the correspondence of another eigenvectors, we consider the projection $P_\ast$ defined at ${\bf x}_\ast\in \mathcal{E}$ onto ${\rm span}\{{\bf v}_{\ast, \alpha}\}$, which is calculated as
\begin{align*}
P_\ast =\frac{1}{2} \left. \begin{pmatrix}
x_1\\ 2x_2
\end{pmatrix}\begin{pmatrix}
2x_1^3 \\ x_2
\end{pmatrix}^T \right|_{{\bf x} = p_{\infty}^+} = \begin{pmatrix}
x_1^4 & \frac{1}{2}x_1x_2\\
2x_1^3 x_2 & x_2^2
\end{pmatrix}_{{\bf x} = {\bf x}_\ast}
\end{align*}
and hence the projection $I-P_{\ast}$ is
\begin{equation*}
I-\KMg{P_\ast} = \begin{pmatrix}
1 - x_1^4 & -\frac{1}{2}x_1x_2\\
-2x_1^3 x_2 & 1 - x_2^2
\end{pmatrix}_{{\bf x} = {\bf x}_\ast}.
\end{equation*}
Letting $P_{\infty}^+$ be the projection $P_{\ast}$ with ${\bf x}_\ast = p_\infty^+$, direct calculations yield
\begin{align*}
(I-P_\infty^+) \begin{pmatrix}
1 / r_{{\bf Y}_0}^- \\
3 / (r_{{\bf Y}_0}^-)^2 \\
\end{pmatrix} &\approx \begin{pmatrix}
0.04274910089486506 & -0.1022562689758424 \\
-0.4001868606922550 & +0.9572508991051355 
\end{pmatrix} \begin{pmatrix}
1/ (408 - 234\sqrt{3})^{1/4} \\
3/ (408 - 234\sqrt{3})^{1/2}
\end{pmatrix}\\
&\approx \begin{pmatrix}
-0.1533408070194538 \\
1.435468229567002
\end{pmatrix},
\end{align*}
where the vector $(1,3)^T$ is the eigenvector of $A$ associated with the eigenvalue $2-2\sqrt{3}$.
Then we obtain
\begin{equation*}
\frac{1.435468229567002}{-0.1533408070194538} \approx \frac{0.9943428097680588}{-0.1062185325758257},
\end{equation*}
where the latter ratio is calculated from the eigenvector associated with the eigenvalue $\frac{2-2\sqrt{3}}{r_{{\bf Y}_0}^-}$ shown in (\ref{epair-KK-sink}).
The correspondence of eigenvectors stated in Theorem \ref{thm-ev1} is therefore confirmed for $p_\infty^+$.
\par
Similarly, we know that eigenpairs of $Dg(p_{\infty,s}^+)$ are
\begin{equation}
\label{epair-KK-saddle}
\left\{ -C_{\ast.s}^+, \begin{pmatrix}
0.581870201791748 \\
0.813281666009280
\end{pmatrix}\right\},\quad \left\{ \frac{2+2\sqrt{3}}{r_{{\bf Y}_0}^+}, \begin{pmatrix}
0.4065790070098367\\ 
-0.9136156254458955
\end{pmatrix}\right\},
\end{equation}
where we have used the identity of eigenvalues derived above.

Letting $P_{\infty,s}^+$ be the projection $P_{\ast}$ with ${\bf x}_\ast = p_{\infty,s}^+$, direct calculations yield
\begin{align*}
(I-P_{\infty,s}^+) \begin{pmatrix}
1 / r_{{\bf Y}_0}^+ \\
3 / (r_{{\bf Y}_0}^+)^2 \\
\end{pmatrix} &\approx \begin{pmatrix}
0.3834804073018562 & -0.2743647512365853 \\
-0.8617112200159814 & 0.6165195926981430
\end{pmatrix} \begin{pmatrix}
1/ (408 + 234\sqrt{3})^{1/4} \\
3/ (408 + 234\sqrt{3})^{1/2}
\end{pmatrix}\\
&\approx \begin{pmatrix}
-0.2017528727505380  \\
0.4533548802214182
\end{pmatrix},
\end{align*}
where the vector $(1,3)^T$ is the eigenvector of $A$ associated with the eigenvalue $2+2\sqrt{3}$.
Then we obtain
\begin{equation*}
\frac{-0.2017528727505380}{0.4533548802214182} \approx \frac{0.4065790070098367}{-0.9136156254458955},
\end{equation*}
where the latter ratio is calculated from the eigenvector associated with the eigenvalue $\frac{2+2\sqrt{3}}{r_{{\bf Y}_0}^+}$ shown in (\ref{epair-KK-sink}).
The correspondence of eigenvectors stated in Theorem \ref{thm-ev1} is therefore confirmed for $p_{\infty,s}^+$.

\subsection{An artificial system in the presence of Jordan blocks}
\label{section-ex-log}

The next example \KMg{concerns} with an artificial system \KMg{such that} the blow-up power-determining matrix has a non-trivial Jordan block.
\KMa{In \cite{asym1}, asymptotic expansions of blow-up solutions are calculated assuming their existence. 
Here we investigate if blow-up solutions of the systems we are interested in here indeed exist, as well as correspondences of associated eigenstructures stated in Section \ref{section-correspondence}.}

\subsubsection{The presence of terms of order $k+\alpha_i - 1$}
First we consider
\begin{equation}
\label{log}
	u' = u^2 + v, \quad v' = au^3 + 3uv - u^2\KMg{,}
\end{equation}
where $a\in\mathbb{R}$ is a parameter.
This system is asymptotically quasi-homogeneous of type $\alpha = (1,2)$ and order $k+1=2$, 
consisting of the quasi-homogeneous part $f_{\alpha, k}$ and the lower-order part $f_{\rm res}$ given as follows: 
\begin{equation}
\label{log-vf}
	f_{\alpha, k}(u,v) = \begin{pmatrix}
u^2 + v\\
 au^3 + 3uv
 \end{pmatrix}, \quad f_{\mathrm{res}}(u,v) = \begin{pmatrix}
 0\\
 -u^2	
 \end{pmatrix}.
\end{equation}
To verify the existence of blow-up solutions, we investigate the dynamics at infinity.
To this end, we apply the parabolic compactification
\begin{equation*}
u = \kappa x_1,\quad v = \kappa^2 x_2,\quad \kappa = (1-p({\bf x})^4)^{-1},\quad p({\bf x})^4 = x_1^4 + x_2^2
\end{equation*}
and the time-scale desingularization
\begin{equation*}
d\tau = \frac{1}{4}(1-p({\bf x})^4)^{-1}\left( 1 + 3 p({\bf x})^4 \right)^{-1}dt
\end{equation*}
to (\ref{log}) and we have the desingularized vector field
\begin{align}
\label{desing-log}
\begin{aligned}
\dot x_1 &= \frac{1}{4}\left(1+ 3p({\bf x})^4 \right)\left( x_1^2 + x_2 \right) - x_1 G_a({\bf x}),\\
\dot x_2 &= \frac{1}{4}\left(1+ 3p({\bf x})^4 \right)\left(  ax_1^3 + 3x_1x_2 - \kappa^{-1}x_1^2\right) - 2 x_2 G_a({\bf x}),
\end{aligned}
\end{align}
where
\begin{equation*}
G_a({\bf x}) = x_1^3 \left( x_1^2 + x_2 \right) + \frac{1}{2}x_2 \left( ax_1^3 + 3x_1x_2 - \kappa^{-1}x_1^2 \right).
\end{equation*}
We pay attention to the case $a=0$.
Then equilibria on the horizon satisfy
\begin{equation*}
x_1^2 + x_2  - x_1 C = 0,\quad 3x_1x_2  - 2 x_2 C = 0,\quad 
C = x_1^3 \left( x_1^2 + x_2 \right) + \frac{3}{2} x_1x_2^2.
\end{equation*}
One easily find an equilibrium on the horizon $(x_1, x_2) = (1,0) \equiv {\bf x}_\ast$.
The value $C_\ast$ given in (\ref{const-horizon}) is $C_\ast = 1$.
The Jacobian matrix of the vector field (\ref{desing-log}) with $a=0$ at $(x_1, x_2)$ is
\begin{align*}
J({\bf x}) &= \begin{pmatrix}
J_{11} & J_{12}  \\
J_{21} & J_{22}
\end{pmatrix},\\
J_{11} &= 3x_1^3(x_1^2 + x_2) + \frac{1}{2}x_1 R({\bf x}) - G_0({\bf x}) - x_1 \frac{\partial G_0}{\partial x_1}({\bf x}),\\
J_{12} &= \frac{3}{2}x_2(x_1^2 + x_2) + \frac{1}{4}R({\bf x}) - x_1 \frac{\partial G_0}{\partial x_2}({\bf x}),\\
J_{21} &= 3x_1^3(3x_1x_2 - \kappa^{-1}x_1^2) + \frac{1}{4} R({\bf x})(3x_2 +4x_1^5 - 2\kappa^{-1}x_1 )  - \KMd{2x_2} \frac{\partial G_0}{\partial x_1}({\bf x}),\\
J_{22} &= \frac{3}{2}x_2(3x_1x_2 - \kappa^{-1}x_1^2) + \frac{1}{4}R({\bf x})(3x_1 + 2x_1^2 x_2 )  - 2G_0({\bf x}) - 2x_2 \frac{\partial G_0}{\partial x_2}({\bf x}),
\end{align*}
where $R({\bf x}) = 1+3p({\bf x})^4$ and
\begin{align*}
\frac{\partial G_0}{\partial x_1}({\bf x}) &= 5x_1^4 + 3x_1^2x_2 + \frac{3}{2}x_2^2 - \kappa^{-1}x_1x_2 + 2x_1^5 x_2,\\
\frac{\partial G_0}{\partial x_2}({\bf x}) &= x_1^3 + 3x_1x_2 - \frac{1}{2}\kappa^{-1}x_2^2 + x_1^2 x_2^2.
\end{align*}
Substituting $(x_1, x_2) = {\bf x}_\ast$ into $J({\bf x})$, we have
\begin{equation*}
J({\bf x}_\ast) = \begin{pmatrix}
3 + 2 - 1 - 5 & 1 - 1 \\
0 + (0+4-0) - \KMd{0} & 0 + 3 - 2 
\end{pmatrix} = \begin{pmatrix}
- 1 & 0 \\
\KMd{4} & 1
\end{pmatrix},
\end{equation*}
which implies that the equilibrium ${\bf x}_\ast$ is the hyperbolic saddle.
Moreover, the eigenvector associated with the eigenvalue $-1$ is $\KMd{(1,-2)^T}$, while the eigenvector associated with the eigenvalue $+1$ is $(0,1)^T$.
The latter is tangent to the horizon at ${\bf x}_\ast$.
As a consequence, the stable manifold of ${\bf x}_\ast$ is extended inside $\mathcal{D}$ and hence the local stable manifold $W^s_{\rm loc}({\bf x}_\ast)$ induces (finite-time) blow-up solutions of (\ref{log}) with $a=0$ in forward time direction.
Note that the eigenstructure of $J({\bf x}_\ast)$ is {\em not contradictory} to Theorem \ref{thm-ev1} \KMg{because} there {\em is} a nonzero term of order $k+\alpha_2 - 1 = 2$; $-u^2$, in the second component of (\ref{log}).
In contrast, we cannot determine whether $v$ blows up at $t = t_{\max}$ yet, \KMg{because} $\KMb{x_{2;\ast}} = 0$.
This consequence is not contradictory to Theorem \ref{thm:blowup}, either.
In other words, subsequent terms must be investigated to determine the asymptotic behavior of $v(t)$ as $t \to t_{\max}$.
\par
\KMa{
Asymptotic expansions calculated in \cite{asym1} indeed clarify this ambiguity.
Now introduce the asymptotic expansion}
\begin{align}
\notag
u(t) &= \theta(t)^{-1}U(t) \equiv \theta(t)^{-1}\sum_{n=0}^\infty U_n(t),\quad U_n(t) \ll U_{n-1}(t),\quad \lim_{t\to t_{\max}}U_n(t) = U_0, \\
\label{series-log}
v(t) &= \theta(t)^{-2}V(t) \equiv \theta(t)^{-2}\sum_{n=0}^\infty V_n(t),\quad V_n(t) \ll V_{n-1}(t), \quad \lim_{t\to t_{\max}}V_n(t) = V_0.
\end{align}
As seen in \cite{asym1}, the balance law is
\begin{equation*}
\begin{pmatrix}
U_0 \\ 2V_0
\end{pmatrix} = \begin{pmatrix}
U_0^2 + V_0\\
aU_0^3 + 3U_0V_0
 \end{pmatrix}.
\end{equation*}
Our particular interest here is the case $a=0$, in which case the root is $(U_0, V_0) = (1,0)$, which is consistent with \KMg{${\bf x}_\ast$}.
We fix $a=0$ for a while.
The blow-up power-determining matrix at $(U_0, V_0)$ is
\begin{equation}
\label{blow-up-A-Jordan}
A = \begin{pmatrix}
-1 & 0 \\ 0 & -2
\end{pmatrix} + \begin{pmatrix}
2U_0 & 1\\
3V_0 & 3U_0
 \end{pmatrix} = \begin{pmatrix}
1& 1\\
0 & 1
 \end{pmatrix},
\end{equation}
that is, the matrix $A$ has nontrivial Jordan block.
The eigenvector associated with the double eigenvalue $\lambda = 1$ is $(1,0)^T$, while the vector $(0,1)^T$ is the generalized eigenvector.
In the present case, the latter corresponds to the eigenvector $J({\bf x}_\ast)$ associated with the eigenvalue $+1$ (not $-1$ !).
We see the common eigenstructure stated in Theorem \ref{thm-ev1}.
Note again that the present situation is {\em not} contradictory to Proposition \ref{prop-correspondence-ev-Ag-A} because Assumption \ref{ass-f} is not satisfied in the present example.
We finally obtain the following second order asymptotic expansion of the blow-up solution $(u(t), v(t))$ as $t\to t_{\max}-0$:
\begin{equation*}
u(t) \sim \theta(t)^{-1} - \frac{1}{4} - \KMd{\frac{1}{144}}\theta(t),\quad v(t) \sim \frac{1}{2}\theta(t)^{-1} - \frac{1}{24}.
\end{equation*}
\KMa{We see that $v(t)$ blows up, while the rate is smaller than $\theta(t)^{-2}$ expected in Theorem \ref{thm:blowup}. }

\subsubsection{The absence of terms of order $k+\alpha_i - 1$}

Next we consider
\begin{equation}
\label{log2}
	u' = u^2 + v, \quad v' = au^3 + 3uv - u
\end{equation}
with a real parameter $a\in\mathbb{R}$, instead of (\ref{log}).
The difference from (\ref{log}) is the replacement of $-u^2$ by $-u$ in the second component of the vector fields.
Because the quasi-homogeneous part is unchanged, \KMg{the balance law and blow-up power-determining matrix are the same} as those of (\ref{log}), whereas the Jacobian matrix $J({\bf x}_\ast)$ \KMg{for the desingularized vector field} changes due to the absence of terms of the order $k+\alpha_i-1$.
The desingularized vector field is
\begin{align}
\label{desing-log-2}
\begin{aligned}
\dot x_1 &= \frac{1}{4}\left(1+ 3p({\bf x})^4 \right)\left\{ x_1^2 + x_2 \right\} - x_1 \tilde G_a({\bf x}),\\
\dot x_2 &= \frac{1}{4}\left(1+ 3p({\bf x})^4 \right)\left\{  ax_1^3 + 3x_1x_2 - \kappa^{-2}x_1\right\} - 2 x_2 \tilde G_a({\bf x}),
\end{aligned}
\end{align}
where
\begin{equation*}
\tilde G_a({\bf x}) = x_1^3 \left\{ x_1^2 + x_2 \right\} + \frac{1}{2}x_2 \left\{ ax_1^3 + 3x_1x_2 - \kappa^{-2}x_1 \right\}.
\end{equation*}
\KMg{Under the constraint $a=0$,} one easily \KMg{finds} an equilibrium on the horizon $(x_1, x_2)^T = (1,0)^T \equiv {\bf x}_\ast$.
The Jacobian matrix of the vector field (\ref{desing-log-2}) with $a=0$ at ${\bf x}_\ast$ is now
\begin{align*}
\KMa{J({\bf x}_\ast)} &= \begin{pmatrix}
J_{11} & J_{12}  \\
J_{21} & J_{22}
\end{pmatrix},\\
J_{11} &= \left(3x_1^3(x_1^2 + x_2) + \frac{1}{2}x_1 R({\bf x}) - \tilde G_0({\bf x}) - x_1 \frac{\partial \tilde G_0}{\partial x_1}({\bf x}) \right)_{{\bf x} = {\bf x}_\ast} = -1,\\
J_{12} &= \left(\frac{3}{2}x_2(x_1^2 + x_2) + \frac{1}{4}R({\bf x}) - x_1 \frac{\partial \tilde G_0}{\partial x_2}({\bf x})\right)_{{\bf x} = {\bf x}_\ast} = 0,\\
J_{21} &= \left( 3x_1^3(3x_1x_2 - \kappa^{-2}x_1) + \frac{1}{4} R({\bf x})(3x_2 + 8x_1^4\kappa^{-1} - \kappa^{-2} )  - 2x_2 \frac{\partial \tilde G_0}{\partial x_1}({\bf x}) \right)_{{\bf x} = {\bf x}_\ast} = 0,\\
J_{22} &= \left( \frac{3}{2}x_2(3x_1x_2 - \kappa^{-2}x_1) + \frac{1}{4}R({\bf x})(3x_1 + 4x_1 x_2\kappa^{-1} )  - 2\tilde G_0({\bf x}) - 2x_2 \frac{\partial \tilde G_0}{\partial x_2}({\bf x}) \right)_{{\bf x} = {\bf x}_\ast} = 1,
\end{align*}
where $R({\bf x}) = 1+3p({\bf x})^4$ and
\begin{align*}
\frac{\partial \tilde G_0}{\partial x_1}({\bf x}) &= 5(x_1^2 + 3x_2)x_1^2 + \frac{3}{2}x_2^2 + 4x_1^4 x_2 \kappa^{-1} - \frac{1}{2}x_2\kappa^{-1},\\
\frac{\partial \tilde G_0}{\partial x_2}({\bf x}) &= x_1^3 + \frac{3}{2}x_1x_2 - \frac{1}{2}\kappa^{-2}x_1 + \frac{3}{2}x_1x_2 + 2x_1x_2^2 \kappa^{-1}.
\end{align*}
\KMa{
We therefore know that the equilibrium ${\bf x}_\ast$ is the hyperbolic saddle and that the eigenvector associated with the eigenvalue $-1$ is ${\bf v}_{0,\alpha} = (1,0)^T$, while the eigenvector associated with the eigenvalue $+1$ is $(0,1)^T$.
Now the matrix $A_g$ stated in Theorem \ref{thm-evec-Dg} is 
\begin{equation*}
A_g \equiv \begin{pmatrix}
1 & 1 \\ 0 & 1
\end{pmatrix} = A.
\end{equation*}
In particular, the eigenstructure of $A_g$ is exactly the same as that of $A$.
Therefore the correspondence of eigenstructure stated in Theorem \ref{thm-evec-Dg} is considered between those of $A$ and $J({\bf x}_\ast)$.
Now the blow-up power-determining matrix $A$ for (\ref{log2}) is the same as (\ref{blow-up-A-Jordan}).
As seen in the previous example, the vector $(0,1)^T$ is the generalized eigenvector of $A$ associated with the {\em double} eigenvalue $\lambda = 1$.
It turns out here that the vector $(0,1)^T$ is the eigenvector of $J({\bf x}_\ast)$ associated with the {\em simple} eigenvalue $+1$.
The gap of multiplicity is exactly what we have stated in Theorem \ref{thm-evec-Dg} with $m_\lambda = 2$ and $m_{\lambda_g} = 1$.
Indeed, letting ${\bf w} = (0,1)^T$ and ${\bf w}_g = (0,1)^T$, we have
\begin{align*}
{\bf v}_{\ast, \alpha} &= \begin{pmatrix}
1 \\ 0
\end{pmatrix},\quad \nabla p({\bf x}_\ast) =  \begin{pmatrix}
1 \\ 0
\end{pmatrix} \quad \Rightarrow \quad I-P_\ast = \begin{pmatrix}
0 & 0  \\ 0 & 1
\end{pmatrix},\\
B_g &= -P_\ast (A_g + C_\ast I) = -\begin{pmatrix}
1 & 0  \\ 0 & 0
\end{pmatrix}\left\{ \begin{pmatrix}
1 & 1  \\ 0 & 1
\end{pmatrix} + \begin{pmatrix}
1 & 0  \\ 0 & 1
\end{pmatrix}\right\},\\
A_g - kC_\ast I &= \begin{pmatrix}
1 & 1  \\ 0 & 1
\end{pmatrix} - \begin{pmatrix}
1 & 0  \\ 0 & 1
\end{pmatrix} = \begin{pmatrix}
0 & 1  \\ 0 & 0
\end{pmatrix},\quad  (A_g - kC_\ast I)^2 =  \begin{pmatrix}
0 & 0  \\ 0 & 0
\end{pmatrix},\\
(I-P_\ast){\bf w} &= \begin{pmatrix}
0 \\ 1
\end{pmatrix}\equiv {\bf w}_g, \quad (A_g - kC_\ast I){\bf w}_g = \begin{pmatrix}
0 \\ 1
\end{pmatrix} \equiv {\bf w},\\
Dg({\bf x}_\ast) - kC_\ast I &= J({\bf x}_\ast) - kC_\ast I =  \begin{pmatrix}
-1 & 0  \\ 0 & 1
\end{pmatrix} - \begin{pmatrix}
1 & 0  \\ 0 & 1
\end{pmatrix} = \begin{pmatrix}
-2 & 0  \\ 0 & 0
\end{pmatrix}.
\end{align*}
Obviously, we see that $(I-P_\ast){\bf w} \in \ker(Dg({\bf x}_\ast) - kC_\ast I )$ and $(A_g - kC_\ast I) = {\bf v}_{\ast,\alpha} \in \ker((A_g - kC_\ast I))$.
}
\par
In \cite{asym1}, the following asymptotic expansion for the blow-up solution for (\ref{log2}) is calculated:
\begin{equation*}
u(t) \sim \theta(t)^{-1} - \frac{1}{9} \theta(t),\quad v(t) \sim \frac{1}{3}.
\end{equation*}
\KMa{Unlike the system (\ref{log}), we see that $v(t)$ remains bounded as $t\to t_{\max}$, which is not  expected in Theorem \ref{thm:blowup}. 
This result as well as that in the previous example extract the importance of multi-order asymptotic expansions for blow-up solutions such that the concrete asymptotic behavior cannot be clearly described from Theorem \ref{thm:blowup}.}

\begin{rem}\rm
Persistence of hyperbolicity under perturbations of vector fields yields the existence of hyperbolic equilibria on the horizon for $a\not = 0$ sufficiently close to $0$, in which case ${\bf x} = (1,0)^T$ is not an equilibrium and the new equilibrium with $a\not = 0$ possesses nonzero second component in general.
In particular, the blow-up rate $O(\theta(t)^{-2})$ in $v(t)$ becomes active under such perturbations, according to Theorem \ref{thm:blowup}.
This implies that the blow-up rate can depend on vector fields in a {\em discontinuous} manner, \KMg{even if equilibria or general invariant sets on the horizon depend {\em continuously} on parameters}.
\end{rem}

\section*{Concluding Remarks}

In this paper, we have provided the correspondence of \KMb{coefficients characterizing the leading terms of blow-ups and} eigenstructure between the system associated with asymptotic expansions of blow-ups proposed in Part I \cite{asym1}, and desingularized vector fields through compactifications and time-scale desingularizations.
We have shown that equilibria of these transformed systems, introduced to describe type-I finite-time blow-ups in forward time direction for the original system, correspond one-to-one to each other, and that there is a natural correspondence of eigenstructures between Jacobian matrices at the above equilibria.
As a consequence, both equilibria are hyperbolic once either of them turns out to be hyperbolic and, as a corollary, hyperbolic structure of the leading terms of asymptotic expansions of blow-ups, constructed assuming their existence, indeed guarantees their existence.
In particular, in the case of ODEs with asymptotically quasi-homogeneous vector fields, 
\KMb{asymptotic expansions} of type-I blow-ups themselves provide their dynamical properties, including their existence, and vice versa.
Such correspondence can be seen in various examples presented in Part I \cite{asym1}.
We believe that the correspondence obtained in the present paper \KMc{provides} a significant insight into blow-up studies for a wide class of differential equations.


\par
\bigskip
We end this paper by providing comments related to the present study.

\subsection*{Asymptotic expansion of complex blow-ups such as oscillatory blow-ups}
According to \cite{Mat2018} and \cite{Mat2019}, blow-up behavior (for asymptotically quasi-homogeneous systems) can be understood through the dynamical structure of {\em invariant sets} on the horizon for defingularized vector fields.
In the present study, we have focused {\em only on stationary blow-ups}, namely blow-ups induced by equilibria on the horizon for desingularized vector fields, to discuss multi-order asymptotic expansions.
It is then natural to question how multi-order asymptotic expansions of blow-ups induced by {\em general invariant sets} on the horizon behave.
As concrete examples, {\em oscillatory blow-up behavior} is studied originated from suspension bridge problems in mechanical engineering (e.g. \cite{GP2011, GP2013, HBT1989, LM1990}).
Later this complex behavior is extracted by means of {\em local stable manifolds of periodic orbits} with computer-assisted proofs or {\em rigorous numerics} (\cite{DALP2015}).
Independently, it is proposed in \cite{Mat2018} that such oscillatory blow-up behavior is characterized by periodic orbits on the horizon for desingularized vector fields, which is referred to as {\em periodic blow-up}.
From the viewpoint of complete understandings of blow-up solutions themselves (for ODEs), asymptotic expansions of periodic blow-up solutions are suitable issues as the sequel to the present study.
\par
It should be noted that, in \cite{DALP2015}, oscillatory blow-up behavior is extracted through validation of {\em unstable} periodic orbits, where the ansatz of oscillatory blow-ups like those mentioned in Assumption \ref{ass-fundamental} is assumed to consider its dynamical property.
On the other hand, such an approach can extract wrong stability property about the original blow-up behavior, as indicated in Theorem \ref{thm-stability} in the case of stationary blow-ups.
The preceding works and the present study motivate to study the true correspondence of dynamical properties between desingularized vector fields $g$ and the counterpart of the system (\ref{blow-up-basic}) to periodic blow-ups\KMf{,} and general complex blow-up behavior.

\subsection*{\KMh{Infinite-dimensional problems}}
In the field of partial differential equations (PDEs for short), the {\em rescaling} of functions for (parabolic-type) equations is widely used for studying asymptotic behavior near finite-time singularity (e.g., behavior of time-space dependent solutions $u(t,x)$ with $t < t_{\max} < \infty$ near $t_{\max}$).
In many studies of blow-ups for parabolic-type PDEs, the rescaling is applied assuming that the type-I blow-up occurs (e.g. \cite{GK1985, HV1993, MZ1998}).
Our present approach of asymptotic expansion of blow-up solutions introduced in \KMb{Part I \cite{asym1}} begins with the ansatz of blow-up profiles with {\em type-I blow-up rates}, which turns out to be similar to typical approaches applied to PDEs mentioned above.
Therefore the system of our interest (\ref{blow-up-basic}) has the similar form to that for rescaled profiles of blow-ups, which are referred to as {\em backward self-similar profiles}, for \KMh{parabolic-type} PDEs (see references mentioned above).
Except a special case where the system itself is scale invariant (corresponding to the quasi-homogeneity in our setting), blow-up solutions are typically {\em assumed} to exist for studying their asymptotic behavior.
In other words, their existence is discussed independently through {\em nonlinear} and/or expensive analysis.
In contrast, one of our results, Theorem \ref{thm-existence-blow-up}, shows that, under mild assumptions for vector fields, {\em linear} information associated with asymptotic profiles provides the existence of blow-up solutions as well as their dynamical properties, even if the existence is not known a priori.
The present results discussed in this paper will provide a new insight into the characterization of blow-up behavior for evolutionary equations, including infinite-dimensional ones such as parabolic-type PDEs.
\KMh{
Our characterizations of blow-up solutions rely on the approach reviewed in Section \ref{section-preliminary}. 
Once its infinite-dimensional analogue is constructed, the dynamical correspondence of blow-up solutions presented in this paper can be extended to infinite-dimensional problems, although there are significant difficulties to be overcome, as mentioned in \cite{Mat2019}. 
}

\subsection*{Presence of logarithmic terms}
Observations in Section \ref{section-ex-log} indicate that logarithmic terms in asymptotic expansions can be present {\em only if} a {\em negative} blow-up power eigenvalue $\lambda$ generates a nontrivial Jordan block of the blow-up power-determining matrix $A$.
To this end, the vector field (\ref{ODE-original}) must have the dimension more than two, \KMg{because} the matrix $A$ always possess the eigenvalue $1$.
In other words, in {\em two}-dimensional systems, the eigenvalue $1$ is the only one admitting the nontrivial Jordan block for the matrix $A$\KMg{,} and this is the only case of the presence of logarithmic functions in the fundamental matrix for rational vector fields with rational blow-up rates (the first terms of blow-ups).
We therefore leave the following conjecture.
\begin{itemize}
\item In any two dimensional rational vector fields, all possible blow-up solutions satisfying Assumption \ref{ass-fundamental} includes {\em no logarithmic functions of $\theta(t)$} in their finite-order asymptotic expansions.
\end{itemize}

\subsection*{Efficient rigorous numerics of local stable manifolds of equilibria at infinity}

We shall connect the preceding works, {\em computer-assisted proofs} or {\em rigorous numerics} of blow-up solutions with concrete and rigorous bounds of blow-up times, to the present study.
In \cite{MT2020_1, TMSTMO2017}, computer-assisted proofs for the existence of blow-up solutions\KMg{, their concrete profiles, and their} blow-up times are provided, which are based on desingularized vector fields associated with (admissible global) compactifications and time-scale desingularizations.
As seen in several examples in the present paper, these compactifications cause the significant increase of degree of polynomials depending on the type $\alpha$ of the original vector field $f$, even if $f$ consists of polynomials of low degree.
In general, admissible global compactifications require lengthy calculations of vector fields themselves, as well as their equilibria, linearized matrices or {\em parameterization of invariant manifolds} (cf. \cite{LMT2021} \KMg{for constructing stable manifolds in an efficient way}).
On the other hand, the proposed methodology for asymptotic expansions discussed in Part I \cite{asym1} indicate that several essential dynamical information of equilibria on the horizon can be extracted from the simpler system (\ref{blow-up-basic}), which can contribute to {\em efficient} validation methodology of blow-ups through simplification of computing objects.
We have seen in Part I and the present paper that equilibria on the horizon and eigenstructures for desingularized vector fields are shown to be calculated in an efficient way through asymptotic expansions. 
The next issue for problems mentioned here is the efficient computation of {\em parameterization of invariant manifolds}, which provides mappings describing invariant manifolds as the graphs as well as conjugate relation between the original nonlinear dynamics on them and the simpler, in particular {\em linearized} one.
This concept is originally developed in \cite{CFdlL2003-1, CFdlL2003-2, CFdlL2005} and there are many successful applications to describe nonlinear invariant dynamics with computer assistance.
An application to blow-up validation is shown in \cite{LMT2021}.
Using the knowledge of asymptotic expansions, one expects that invariant manifolds describing a family of blow-up solutions can be also constructed in more efficient way than desingularized vector fields, as achieved in calculations of equilibria and eigenstructures.
It should be noted, however, that the system (\ref{blow-up-basic}) itself can extract wrong dynamical properties of blow-up solutions due to the intrinsic difference of eigenvalue distribution among two systems of our interest \KMg{(Theorem \ref{thm-stability})}.
The present study implies that careful treatments are required to validate geometric and dynamical properties of blow-up solutions in an efficient way.

\section*{Acknowledgements}
The essential ideas in the present paper are inspired in Workshop of Unsolved Problems in Mathematics 2021 sponsored by Japan Science and Technology Agency (JST). 
All authors appreciate organizers and sponsors of the workshop for providing us with an opportunity to create essential ideas of the present study.
KM was partially supported by World Premier International Research Center Initiative (WPI), Ministry of Education, Culture, Sports, Science and Technology (MEXT), Japan.
\KMb{KM and AT were} JSPS Grant-in-Aid for Scientists (B) (No. JP21H01001).
\bibliographystyle{plain}
\bibliography{blow_up_asymptotic}

\begin{thebibliography}{10}

\bibitem{AM1989}
M.~Adler and P.~van Moerbeke.
\newblock The complex geometry of the {K}owalewski-{P}ainlev{\'e} analysis.
\newblock {\em Inventiones Mathematicae}, 97(1):3--51, 1989.

\bibitem{A2002}
B.~Andrews.
\newblock Singularities in crystalline curvature flows.
\newblock {\em Asian J. Math.}, 6(1):101--122, 2002.

\bibitem{asym1}
T.~Asai, H.~Kodani, K.~Matsue, H.~Ochiai, and T.~Sasaki.
\newblock Multi-order asymptotic expansion of blow-up solutions for autonomous
  {ODE}s. {I} - {M}ethod and {J}ustification.
\newblock {\em submitted}, 2022.

\bibitem{CFdlL2003-1}
X.~Cabr{\'e}, E.~Fontich, and R.~de~la Llave.
\newblock The parameterization method for invariant manifolds {I}: manifolds
  associated to non-resonant subspaces.
\newblock {\em Indiana University Mathematics Journal}, pages 283--328, 2003.

\bibitem{CFdlL2003-2}
X.~Cabr{\'e}, E.~Fontich, and R.~de~la Llave.
\newblock The parameterization method for invariant manifolds {II}: regularity
  with respect to parameters.
\newblock {\em Indiana University Mathematics Journal}, pages 329--360, 2003.

\bibitem{CFdlL2005}
X.~Cabr{\'e}, E.~Fontich, and R.~De~La~Llave.
\newblock The parameterization method for invariant manifolds {III}: overview
  and applications.
\newblock {\em Journal of Differential Equations}, 218(2):444--515, 2005.

\bibitem{C2015}
H.~Chiba.
\newblock {K}ovalevskaya exponents and the space of initial conditions of a
  quasi-homogeneous vector field.
\newblock {\em Journal of Differential Equations}, 259(12):7681--7716, 2015.

\bibitem{C2016_124}
H.~Chiba.
\newblock The first, second and fourth {P}ainlev{\'e} equations on weighted
  projective spaces.
\newblock {\em Journal of Differential Equations}, 260(2):1263--1313, 2016.

\bibitem{C2016_356}
H.~Chiba.
\newblock The third, fifth and sixth {P}ainlev{\'e} equations on weighted
  projective spaces.
\newblock {\em Symmetry, Integrability and Geometry: Methods and Applications},
  12:019, 2016.

\bibitem{DALP2015}
L.~D'Ambrosio, J.-P. Lessard, and A.~Pugliese.
\newblock Blow-up profile for solutions of a fourth order nonlinear equation.
\newblock {\em Nonlinear Analysis: Theory, Methods \& Applications},
  121:280--335, 2015.

\bibitem{D1993}
F.~Dumortier.
\newblock Techniques in the theory of local bifurcations: Blow-up, normal
  forms, nilpotent bifurcations, singular perturbations.
\newblock In {\em Bifurcations and Periodic Orbits of Vector Fields}, pages
  19--73. Springer, 1993.

\bibitem{DH1999}
F.~Dumortier and C.~Herssens.
\newblock Polynomial {L}i\'{e}nard equations near infinity.
\newblock {\em Journal of differential equations}, 153(1):1--29, 1999.

\bibitem{DLA2006}
F.~Dumortier, J.~Llibre, and J.C. Art{\'e}s.
\newblock {\em Qualitative theory of planar differential systems}.
\newblock Springer, 2006.

\bibitem{EG2006}
U.~Elias and H.~Gingold.
\newblock Critical points at infinity and blow up of solutions of autonomous
  polynomial differential systems via compactification.
\newblock {\em Journal of mathematical analysis and applications},
  318(1):305--322, 2006.

\bibitem{GP2011}
F.~Gazzola and R.~Pavani.
\newblock Blow up oscillating solutions to some nonlinear fourth order
  differential equations.
\newblock {\em Nonlinear Analysis: Theory, Methods \& Applications},
  74(17):6696--6711, 2011.

\bibitem{GP2013}
F.~Gazzola and R.~Pavani.
\newblock Wide oscillation finite time blow up for solutions to nonlinear
  fourth order differential equations.
\newblock {\em Archive for Rational Mechanics and Analysis}, 207(2):717--752,
  2013.

\bibitem{GK1985}
Y.~Giga and R.V. Kohn.
\newblock Asymptotically self-similar blow-up of semilinear heat equations.
\newblock {\em Comm. Pure Appl. Math.}, 38:297--319, 1985.

\bibitem{G2004}
H.~Gingold.
\newblock Approximation of unbounded functions via compactification.
\newblock {\em Journal of Approximation Theory}, 131(2):284--305, 2004.

\bibitem{HV1993}
M.A. Herrero and J.J.L. Vel{\'a}zquez.
\newblock Blow-up behaviour of one-dimensional semilinear parabolic equations.
\newblock In {\em Annales de {l}'Institut {H}enri {P}oincar{\'e} {C}, {A}nalyse
  non lin{\'e}aire}, volume~10, pages 131--189. Elsevier, 1993.

\bibitem{HBT1989}
G.W. Hunt, H.M. Bolt, and J.M.T. Thompson.
\newblock Structural localization phenomena and the dynamical phase-space
  analogy.
\newblock {\em Proceedings of the Royal Society of London. A. Mathematical and
  Physical Sciences}, 425(1869):245--267, 1989.

\bibitem{IY2003}
T.~Ishiwata and S.~Yazaki.
\newblock On the blow-up rate for fast blow-up solutions arising in an
  anisotropic crystalline motion.
\newblock {\em Journal of Computational and Applied Mathematics},
  159(1):55--64, 2003.

\bibitem{KSS2003}
B.L. Keyfitz, R.~Sanders, and M.~Sever.
\newblock Lack of hyperbolicity in the two-fluid model for two-phase
  incompressible flow.
\newblock {\em DISCRETE AND CONTINUOUS DYNAMICAL SYSTEMS SERIES B},
  3(4):541--564, 2003.

\bibitem{KK1990}
H.C. Kranzer and B.L. Keyfitz.
\newblock A strictly hyperbolic system of conservation laws admitting singular
  shocks.
\newblock In {\em Nonlinear evolution equations that change type}, pages
  107--125. Springer, 1990.

\bibitem{LM1990}
A.C. Lazer and P.J. McKenna.
\newblock Large-amplitude periodic oscillations in suspension bridges: some new
  connections with nonlinear analysis.
\newblock {\em {SIAM} Review}, 32(4):537--578, 1990.

\bibitem{LMT2021}
J.-P. Lessard, K.~Matsue, and A.~Takayasu.
\newblock A geometric characterization of unstable blow-up solutions with
  computer-assisted proof.
\newblock {\em arXiv:2103.12390}, 2021.

\bibitem{Mat2018}
K.~Matsue.
\newblock On blow-up solutions of differential equations with
  {P}oincar\'{e}-type compactifications.
\newblock {\em SIAM Journal on Applied Dynamical Systems}, 17(3):2249--2288,
  2018.

\bibitem{Mat2019}
K.~Matsue.
\newblock Geometric treatments and a common mechanism in finite-time
  singularities for autonomous {ODE}s.
\newblock {\em Journal of Differential Equations}, 267(12):7313--7368, 2019.

\bibitem{MT2020_1}
K.~Matsue and A.~Takayasu.
\newblock Numerical validation of blow-up solutions with quasi-homogeneous
  compactifications.
\newblock {\em Numerische Mathematik}, 145:605--654, 2020.

\bibitem{MT2020_2}
K.~Matsue and A.~Takayasu.
\newblock Rigorous numerics of blow-up solutions for {ODE}s with exponential
  nonlinearity.
\newblock {\em Journal of Computational and Applied Mathematics}, 374:112607,
  2020.

\bibitem{MZ1998}
F.~Merle and H.~Zaag.
\newblock Optimal estimates for blowup rate and behavior for nonlinear heat
  equations.
\newblock {\em Communications on {P}ure and {A}pplied {M}athematics},
  51(2):139--196, 1998.

\bibitem{Rob}
C.~Robinson.
\newblock {\em Dynamical systems - {S}tability, {S}ymbolic {D}ynamics, and
  {C}haos}.
\newblock Studies in Advanced Mathematics. CRC Press, Boca Raton, FL, second
  edition, 1999.

\bibitem{TMSTMO2017}
A.~Takayasu, K.~Matsue, T.~Sasaki, K.~Tanaka, M.~Mizuguchi, and S.~Oishi.
\newblock Numerical validation of blow-up solutions for ordinary differential
  equations.
\newblock {\em Journal of Computational and Applied Mathematics}, 314:10--29,
  2017.

\end{thebibliography}


\end{document}